\begin{document}

\newcommand{\INVISIBLE}[1]{}

\newtheorem{thm}{Theorem}[section]
\newtheorem{lem}[thm]{Lemma}
\newtheorem{cor}[thm]{Corollary}
\newtheorem{prp}[thm]{Proposition}
\newtheorem{conj}[thm]{Conjecture}

\theoremstyle{definition}
\newtheorem{dfn}[thm]{Definition}
\newtheorem{question}[thm]{Question}
\newtheorem{nota}[thm]{Notations}
\newtheorem{notation}[thm]{Notation}
\newtheorem*{claim*}{Claim}
\newtheorem{ex}[thm]{Example}
\newtheorem{rmk}[thm]{Remark}
\newtheorem{rmks}[thm]{Remarks}
\newtheorem{hyp}[thm]{Hypothesis} 
\def\labelenumi{(\arabic{enumi})}

\newcommand{\aro}{\longrightarrow}
\newcommand{\arou}[1]{\stackrel{#1}{\longrightarrow}}
\newcommand{\RA}{\Longrightarrow}

\newcommand{\mm}[1]{\mathrm{#1}}
\newcommand{\bm}[1]{\boldsymbol{#1}}
\newcommand{\bb}[1]{\mathbf{#1}}

\newcommand{\bA}{\boldsymbol A}
\newcommand{\bB}{\boldsymbol B}
\newcommand{\bC}{\boldsymbol C}
\newcommand{\bD}{\boldsymbol D}
\newcommand{\bE}{\boldsymbol E}
\newcommand{\bF}{\boldsymbol F}
\newcommand{\bG}{\boldsymbol G}
\newcommand{\bH}{\boldsymbol H}
\newcommand{\bI}{\boldsymbol I}
\newcommand{\bJ}{\boldsymbol J}
\newcommand{\bK}{\boldsymbol K}
\newcommand{\bL}{\boldsymbol L}
\newcommand{\bM}{\boldsymbol M}
\newcommand{\bN}{\boldsymbol N}
\newcommand{\bO}{\boldsymbol O}
\newcommand{\bP}{\boldsymbol P}
\newcommand{\bY}{\boldsymbol Y}
\newcommand{\bS}{\boldsymbol S}
\newcommand{\bX}{\boldsymbol X}
\newcommand{\bZ}{\boldsymbol Z}

\newcommand{\cc}[1]{\mathscr{#1}}
\newcommand{\ccc}[1]{\mathcal{#1}}

\newcommand{\ca}{\cc{A}}

\newcommand{\cb}{\cc{B}}

\newcommand{\cC}{\cc{C}}

\newcommand{\cd}{\cc{D}}

\newcommand{\ce}{\cc{E}}

\newcommand{\cf}{\cc{F}}

\newcommand{\cg}{\cc{G}}

\newcommand{\ch}{\cc{H}}

\newcommand{\ci}{\cc{I}}

\newcommand{\cj}{\cc{J}}

\newcommand{\ck}{\cc{K}}

\newcommand{\cl}{\cc{L}}

\newcommand{\cm}{\cc{M}}

\newcommand{\cn}{\cc{N}}

\newcommand{\co}{\cc{O}}

\newcommand{\cp}{\cc{P}}

\newcommand{\cq}{\cc{Q}}

\newcommand{\cR}{\cc{R}}

\newcommand{\cs}{\cc{S}}

\newcommand{\ct}{\cc{T}}

\newcommand{\cu}{\cc{U}}

\newcommand{\cv}{\cc{V}}

\newcommand{\cy}{\cc{Y}}

\newcommand{\cw}{\cc{W}}

\newcommand{\cz}{\cc{Z}}

\newcommand{\cx}{\cc{X}}

\newcommand{\g}[1]{\mathfrak{#1}}

\newcommand{\af}{\mathds{A}}
\newcommand{\PP}{\mathds{P}}

\newcommand{\GL}{\mathrm{GL}}
\newcommand{\PGL}{\mathrm{PGL}}
\newcommand{\SL}{\mathrm{SL}}
\newcommand{\NN}{\mathds{N}}
\newcommand{\ZZ}{\mathds{Z}}
\newcommand{\CC}{\mathds{C}}
\newcommand{\QQ}{\mathds{Q}}
\newcommand{\RR}{\mathds{R}}
\newcommand{\FF}{\mathds{F}}
\newcommand{\DD}{\mathbf{D}}
\newcommand{\VV}{\mathds{V}}
\newcommand{\HH}{\mathds{H}}
\newcommand{\MM}{\mathds{M}}
\newcommand{\OO}{\mathds{O}}
\newcommand{\LL}{\mathds L}
\newcommand{\BB}{\mathds B}
\newcommand{\kk}{\mathds k}
\newcommand{\bs}{\mathbf S}
\newcommand{\GG}{\mathds G}
\newcommand{\WW}{\mathds W}
\newcommand{\al}{\alpha}

\newcommand{\be}{\beta}

\newcommand{\ga}{\gamma}
\newcommand{\Ga}{\Gamma}

\newcommand{\om}{\omega}
\newcommand{\Om}{\Omega}

\newcommand{\vt}{\vartheta}
\newcommand{\te}{\theta}
\newcommand{\Te}{\Theta}

\newcommand{\ph}{\varphi}
\newcommand{\Ph}{\Phi}

\newcommand{\ps}{\psi}
\newcommand{\Ps}{\Psi}

\newcommand{\ep}{\varepsilon}

\newcommand{\vr}{\varrho}

\newcommand{\de}{\delta}
\newcommand{\De}{\Delta}

\newcommand{\la}{\lambda}
\newcommand{\La}{\Lambda}

\newcommand{\ka}{\kappa}

\newcommand{\si}{\sigma}
\newcommand{\Si}{\Sigma}

\newcommand{\ze}{\zeta}

\newcommand{\mc}{\g{C}}
\newcommand{\mcrs}{\g{C}^{\rm rs}}
\newcommand{\fr}[2]{\frac{#1}{#2}}
\newcommand{\vs}{\vspace{0.3cm}}
\newcommand{\na}{\nabla}
\newcommand{\pd}{\partial}
\newcommand{\po}{\cdot}
\newcommand{\met}[2]{\left\langle #1, #2 \right\rangle}
\newcommand{\rep}[2]{\mathrm{Rep}_{#1}(#2)}
\newcommand{\repo}[2]{\mathrm{Rep}^\circ_{#1}(#2)}
\newcommand{\hh}[3]{\mathrm{Hom}_{#1}(#2,#3)}
\newcommand{\modules}[1]{#1\text{-}\mathbf{mod}}
\newcommand{\Modules}[1]{#1\text{-}\mathbf{Mod}}
\newcommand{\coh}[1]{#1\,\text{--}\,\mathbf{coh}}
\newcommand{\dmod}[1]{\mathcal{D}(#1)\text{-}{\bf mod}}
\newcommand{\spc}{\mathrm{Spec}\,}
\newcommand{\an}{\mathrm{an}}

\newcommand{\pos}[2]{#1\llbracket#2\rrbracket}

\newcommand{\lau}[2]{#1(\!(#2)\!)}

\newcommand{\cpos}[2]{#1\langle#2\rangle}

\newcommand{\id}{\mathrm{id}}

\newcommand{\one}{\mathds 1}

\newcommand{\ti}{\times}
\newcommand{\tiu}[1]{\underset{#1}{\times}}

\newcommand{\ot}{\otimes}
\newcommand{\otu}[1]{\underset{#1}{\otimes}}

\newcommand{\wh}{\widehat}
\newcommand{\wt}{\widetilde}
\newcommand{\ov}[1]{\overline{#1}}
\newcommand{\un}[1]{\underline{#1}}

\newcommand{\op}{\oplus}

\newcommand{\lid}{\varinjlim}
\newcommand{\lip}{\varprojlim}

\newcommand{\ega}[3]{[EGA $\mathrm{#1}_{\mathrm{#2}}$, #3]}

\newcommand{\asts}{\begin{center}$***$\end{center}}

\title[Connections on relative schemes]{Regular-singular connections on relative complex schemes}
\date{\today}

\author[P. H. Hai]{Ph\`ung H\^o Hai}

\address{Institute of Mathematics, Vietnam Academy of Science and Technology, Hanoi, 
Vietnam}

\email{phung@math.ac.vn}

\author[J. P. dos Santos]{Jo\~ao Pedro  dos Santos}

\address{Institut de Math\'ematiques de Jussieu -- Paris Rive Gauche, 4 place Jussieu, 
Case 247, 75252 Paris Cedex 5, France}

\email{joao\_pedro.dos\_santos@yahoo.com}

\subjclass[2010] {14F10,	32C15, 35Q15, 14L15}

\keywords{Connections, regular-singularities, complex geometry, differential Galois theory.}

\maketitle

\begin{abstract} Deligne's celebrated ``Riemann--Hilbert correspondence'' appearing in \cite{deligne70} relates representations of the fundamental group of a smooth complex algebraic variety and regular-singular integrable connections. In this work, we show how to arrive at a similar statement in the case of a smooth scheme $X$ over the spectrum of a ring $R=\pos\CC {t_1,\ldots, t_r}/I$. On one side of the correspondence we have representations on $R$-modules of the fundamental group of the special fibre, and on the other we have certain integrable $R$-connections admitting logarithmic models. The correspondence is then applied to give explicit examples of differential Galois groups of $\pos\CC t$--connections.
\end{abstract}

\section{Introduction}
The objective of this paper is to show how to adapt Deligne's theory relating regular--singular connections and representations of the fundamental group \cite{deligne70},   \cite{malgrange}   to the setting of schemes defined over certain {\it complete local $\CC$-algebras}. 
This is done with the purpose of determining  easily some differential Galois groups (to be taken in the sense of   \cite{duong-hai-dos_santos18} and \cite{hai-dos_santos19}). 

Deligne's celebrated ``Riemann--Hilbert correspondence'' establishes that on a smooth complex algebraic variety $X^*$ (notation shall become clear soon),  the category of complex linear representations of the topological fundamental group $\pi_1(X^{* \an} )$
can be recovered as the category of integrable regular--singular connections. 
Although a prominent feature of this correspondence is an intrinsic definition of regular--singular connections, this shall not be discussed here. For us, regular--singular connections will only be defined after a ``compactification'' is chosen. So, let us assume that $X^*$ is  an open subvariety of a  {\it proper} and smooth complex algebraic variety $X$ (the compactification) and that $Y:=(X\smallsetminus X^*)_{\rm red}$ is a divisor with normal crossings and smooth connected   components. Then, one says that an integrable connection on $X^*$ is regular--singular whenever it might be extended to a {\it logarithmic} integrable connection on $X$. (To repeat, the appropriate terminology should be $(X,X^*)$-regular-singular as in \cite[Definition 4.1]{kindler15}, but  this is  exhausting.)
Under these definitions, we have a ``Deligne-Riemann-Hilbert'' equivalence of $\CC$-linear tensor categories 
\[
\mm{DRH}: \left\{\begin{array}{c}\text{regular--singular}\\\text{integrable connections on $X^*$}\end{array}\right\}\arou\sim\left\{\begin{array}{c}\text{finite dimensional complex}\\\text{ representations of $\pi_1(X^{*\an})$}\end{array}\right\};
\]
see for example \cite[II.5.9, p.97]{deligne70} or \cite[Theorem 7.2.1]{malgrange}. 
One relevant consequence of this theorem is that it allows one to compute differential Galois groups by means of ``Schlesinger's theorem'':  the differential Galois group is the Zariski closure of the monodromy group. 

A natural question---specially in view of our previous works \cite{duong-hai-dos_santos18} and \cite{hai-dos_santos19}---is to determine if Deligne's theory can be extended to a ``relative'' setting. So let $S$ be the spectrum of a quotient of $\pos\CC {t_1,\ldots,t_r}$ and consider a smooth and proper  $S$-scheme $X\to S$ having connected fibres. Suppose that $Y\subset X$ is a relative  divisor with  normal crossings and write $X^*$ for the complement $X\smallsetminus Y$. Defining a regular-singular connection on $X^*$ in analogy with the definition of the previous paragraph and letting $X_0^*$ be the special fibre of $X^*$,  we might enquire about the relation between 
\[
\left\{\begin{array}{c}\text{regular--singular}\\\text{integrable $S$-connections on $X^*$}\end{array}\right\}\quad \text{and}\quad  \left\{\begin{array}{c}\text{representations of $\pi_1(X_0^{*\an})$ }\\\text{ on finite $\co(S)$-modules }\end{array}\right\}.
\]
Our answer is given by Corollary \ref{25.11.2019--1} and says that under 
additional hypothesis the aforementioned categories are equivalent. 

Our proof of Corollary \ref{25.11.2019--1} follows  Deligne's original method. Let us describe our strategy rapidly while awaiting   a more precise summary  below. Firstly  a considerable part  of the work is developed in the setting of {\it smooth complex spaces} over the analytic spectrum of a $\CC$-algebra $\La$ which is finite dimensional as a vector space. For such spaces, we show that integrable $\La$-connections 
define the same objects as logarithmic {\it meromorphic} connections on a larger space (Theorem \ref{11.05.2016--2}). The key point to obtain this description is that smooth complex spaces over $\La$ {\it do not deform   locally}, so that we are able to apply Deligne's results to extend connections to logarithmic ones (see Theorem \ref{deligne_manin_extension}).     This corresponds roughly to  \cite[Theorem 5.1]{malgrange}. 
After that, we move to algebraic geometry over a {\it complete local noetherian} $\CC$-algebra $R$ with residue field $\CC$ (Section \ref{17.12.2019--1}). Employing Grothendieck's algebraization theorem  and  the fact that the order of the poles of an arrow between logarithmic connections  is   independent of the   truncation of $R$, we show how to find preferred  models for regular-singular $R$-connections (Theorem \ref{01.10.2019--1}). These findings are then assembled to obtain Corollary \ref{25.11.2019--1}, which is the main output of this work.

We now review the remaining sections       separately. In what follows,   $\La$ is a local $\CC$-algebra which is a finite dimensional complex vector space. 

Section \ref{29.01.2020--2} serves mainly to introduce notation and definitions such as:    relative divisors with normal crossings,   number of branches (Definition \ref{02.10.2019--1}) and several variations on the theme of connections  (Definitions \ref{29.01.2020--1}, \ref{04.11.2019--8} and \ref{04.11.2019--9}). This section is also the right place to talk about Deligne's notion of relative local system, a very important tool, see Theorem \ref{10.06.2016--2}. 

Section \ref{29.01.2020--3} serves two purposes and the   first   is to explain the following simple technique.   Let   $S$ be the   analytic spectrum of $\La$ and $D \subset\CC^n$ be an open subset. Then the category of $\co_{D\ti S}$-modules is canonically identified with   the category of $\co_D$-modules endowed with an action of $\La$, so that many local results from \cite[Chapter II]{deligne70}
are easily transposed to the setting of smooth complex spaces over $S$. One of these transpositions, Theorem \ref{21.10.2019--1},   is the second purpose of this section; it is yet another manifestation of the classical principle that singularities ``of the first kind'' are ``regular'' \cite[Chapter 4, Theorem 2.1]{coddington-levinson55}.   

Section \ref{18.12.2019--1} introduces the notion of {\it residues of logarithmic connections} on complex spaces over $\La$ (Definition \ref{19.12.2019--2}).
 Also in Section \ref{18.12.2019--1}  the reader shall find the definition of the {\it exponents} (Definition \ref{29.10.2019--3}) of a logarithmic connection as the spectral set of the residue morphisms. 
 (These constructions are analytic and follows the clear exposition in \cite[pp.157-8]{malgrange}.)
Key to the understanding of the theory exposed here is the fact that exponents are {\it complex numbers}, and not elements of $\La$; that this is a sensible choice is motivated by  Lemma \ref{10.10.2019--1} and Lemma \ref{06.11.2019--1}. 
 
The main result of  Section \ref{04.11.2019--4} is Theorem \ref{11.05.2016--2}. But its fundamental  pillar is Theorem  \ref{deligne_manin_extension}, which  shows that also in the case of smooth complex spaces over $\La$ (as in the previous paragraph) it is possible to extend $\La$-connections to logarithmic ones---we call these extensions {\it Deligne-Manin extensions}.  (In the literature, such extensions  go under the name of {\it canonical} or {\it $\tau$-extensions} \cite[I.4, 22ff]{andre-baldassarri01}, \cite[Definition 5.1]{kindler15}. In \cite[II.5.4]{deligne70}, these are attributed to Manin.) The proof of Theorem \ref{deligne_manin_extension} 
relies on the local existence of logarithmic extensions and their uniqueness.  
Once these   are found, it then becomes a simple matter to show  Theorem \ref{11.05.2016--2}.
 
In Section \ref{17.12.2019--1} we   turn our attention to algebraic geometry and put to use our previous findings, Serre's GAGA and Grothendieck's algebraization theorem (or as we call it,   ``GFGA'').  Section \ref{31.10.2019--1}   discusses elementary properties of relative connections and fixes terminology. Section \ref{31.01.2020--4} explains how to  extend connections on $\La$-schemes  to a previously fixed proper ambient $\La$-scheme (see Theorem \ref{22.10.2019--1});           these constructions rely on the analytic picture developed  earlier  and   GAGA. In Section \ref{31.01.2020--1}, we look at the case of a base ring $R=\pos\CC {t_1,\ldots,t_r}/I$ and, by means of the algebraization theorem, Proposition  \ref{GFGA},   plus   Proposition  \ref{20.11.2019--1} --- stating that arrows between logarithmic connections have ``bounded poles'' ---,  define better suited logarithmic extensions, see Theorem \ref{01.10.2019--1}. In Section \ref{31.01.2020--2} we assemble the various pieces and state our version of the  Deligne-Riemann-Hilbert correspondence, Corollary \ref{25.11.2019--1}.   To end, in Section \ref{31.01.2020--5},    we compute explicitly certain differential Galois groups of elementary regular-singular connections  showing that differential Galois group which fail to be of finite type abound. 

We end this introduction by calling the reader's attention to the similar-oriented works of  L. Fiorot,  T. Monteiro Fernandes and C. Sabbah (FMFS), and   N. Nitsure. 
The   context of the works of FMFS is  that of  $\cd_{M\ti T/T}$--modules, where $M$ and $T$ are complex manifolds.  A fundamental result of the theory of holonomic $\cd_M$-modules is the ``Riemann-Hilbert'' correspondence between (a derived category associated to) regular holonomic modules and (a derived category associated to) constructible  sheaves; see  \cite[Section 5.3]{kashiwara} for a precise statement. It then becomes natural to study the situation in the case ``depending on parameters''. 
In \cite[Definition 2.1]{mfs}, the authors define what it means for a holonomic $\cd_{M\ti T/T}$-module to be regular. This sets up one side of the desired correspondence by passing to the category of bounded complexes with regular cohomologies $\mm D^b_{\rm rhol}(\cd_{M\ti T/T})$.  The other side of the correspondence, the category $\mm D^b_{\CC-c}(\co_T)$, is introduced in   \cite[Definition 2.19]{mfs0}, see also \cite[\S1.2.1-3]{fmfs}. It is in  \cite{fmfs} (see their Theorem 1) that  the authors show that, under certain assumptions, we have an equivalence $\mm D^b_{\rm rhol}(\cd_{M\ti T/T})\simeq\mm D_{\CC-c}(\co_T)$. It is also interesting to note, as Sabbah explained to us,   that in the setting preferred by FMFS, the point of defining locally  the logarithm of the monodromy is based on results from Fr\'echet modules, while in our approach it comes almost without effort (see Proposition \ref{03.09.2019--1}). To conclude, the reader wishing to obtain a deeper understanding between the theory of regular-singular connections and  regular $\cd$-modules  will certainly profit from the thoughtful  paper \cite{cf}, which relates, in the case of a base-field, the existing notions. See Theorem 3.1 in \cite{cf}.    

The context of Nitsure's work is somewhat closer to ours since he pays special attention to algebraic objects and we point out preferably  \cite{nitsure1} (which  is expanded and supplemented by  \cite{nitsure2} and \cite{nitsure3}).  In it, Nitsure considers a fixed projective and smooth complex variety $V$ together with an effective divisor $D$ having only normal crossings. Then, for a complex algebraic scheme $T$, he considers relative logarithmic connections   on $V\ti T$ having poles on $D\ti T$ \cite[Definition 2.6]{nitsure1} and constructs, following the GIT strategy, a certain moduli space for those \cite[Theorem 3.5]{nitsure1}.

\subsection*{Notations and conventions}
\begin{enumerate}[1)] 

\item Complex spaces are to be understood in the sense of \cite[1.1.5]{grauert-remmert84} (called analytic spaces in \cite[Expos\'e 9, 2.1]{sc}).

\item We let   $\La$ stand for a local $\CC$-algebra whose dimension as a complex vector space is finite. Its maximal ideal is denoted by $\g m$.

\item We shall find convenient to denote by $|X|$ the topological space underlying a ringed space $X$. 

\item If $(X,\co_X)$ is a complex  space and    $p$ a point of $X$, 
we let $\g M_p$ stand for the maximal ideal of the local ring $\co_{X,p}$ (so that $\co_{X,p}=\CC\op\g M_p$).
\item If $(X,\co_X)$ is a complex   space     $p$ a point of $X$ and $\ce$ is a coherent sheaf, we let $\ce(p)$ stand for the complex vector space $\ce_p/\g M_p\ce_p$. In like manner, for a section $e\in \Ga(X,\ce)$  we let $s(p)$ be the image of $s_p\in\ce_p$ in $\ce(p)$.

\item A {\it vector bundle} over a locally ringed space $(X,\co_X)$ is simply a locally free sheaf of $\co_X$-modules of finite rank.

\item If $X$ is a scheme, respectively complex  space, $Y\subset X$ is a closed subscheme, respectively subspace,  and $\cm$ is a coherent sheaf on $X$, we write $\cm|_Y$ for the pull-back of $\cm$ to $Y$, that is, $\cm|_Y$ is an $\co_Y$-module. Analogous notations are in force for sections. 

\item If $M$ is a topological space and $E$ is a set, we let $E_M$ denote the {\it simple, or locally constant} sheaf  associated to $E$. 

\item We fix once and for all a subset $\tau$ of $\CC$  containing exactly one element   in each class of $(\CC,+)$ modulo   $(\ZZ,+)$. 


\item  If $V$ is a finite dimensional complex vector space and $A$ an endomorphism of $V$, we write $\mm{Sp}_A$ for the set of eigenvalues of $A$ and, for each $\la\in\mm{Sp}_A$, we write $\bb E(A,\la)$, respectively $\bb G(A,\la)$, for its $\la$--eigenspace, respectively generalized $\la$-eigenspace. 
 
\item We shall find convenient to work with the following version of the  Minkowiski difference of two sets of complex numbers: if $A$ and $B$ are subsets of the complex plane,   we write $B\ominus A$ to denote the set $\{b-a\,:\,b\in B,\,a\in A\}$. ({\it Warning:} In the literature, the   Minkowski difference is  usually defined differently.)

\item All tensor categories (or $\ot$-categories)
 are to be taken in the sense of \cite[Definition 1.1,p.105]{deligne-milne82}. Abelian tensor categories are also understood in the sense of \cite[Definition 1.15,p.118]{deligne-milne82}. 
 \item For a an affine group scheme $G$ over a noetherian ring $R$, we let $\rep RG$ stand for the category of representations of $G$ on finite $R$-modules.  
\end{enumerate}

\subsection*{Acknowledgements}
The research of Ph\`ung H\^o Hai is funded by the Vietnam National Foundation for Science and Technology Development under grant number 101.04-2019.315. He is also partially funded by Vingroup Joint Stock Company and supported by Vingroup Innovation Foundation (VinIF) under the project code VINIF.2021.DA00030.

Both authors thank the referees for various pertinent comments and for pointing out the works of Fiorot, Monteiro Fernandes, Nitsure and Sabbah to us. Thanks are also due to H. Esnault for encouraging an email exchange between some of the aforementioned mathematicians and ourselves. 

\section{Setup}\label{29.01.2020--2}

\subsection{Analytic geometry}In this section we discuss some elementary properties of analytic spaces and sheaves on them for the lack of a suitable reference. We have designed it to reassure the reader with an education in algebraic geometry but with little experience in complex analysis.

Let $f:X\to S$ be a smooth morphism of complex  spaces \cite[Expos\'e 9]{sc}, \cite[Expos\'e 13]{sc}. For any $p\in X$, there exists 
an open   neighbourhood  $U$ and functions  
$x_1,\ldots,x_n\in\Ga(U,\co_X)$ vanishing on $p$  such that the associated morphism \cite[Expos\'e 10, 1.1]{sc}
\[
(x_1,\ldots,x_n,f):U\aro \CC^n\ti S
\]
defines an \emph{isomorphism} of $U$ with $D\ti V$, where $D$ is an open polydisk about $0$  and $V$ an open neighbourhood of $f(p)$. The couple $(U,  x_1,\ldots,x_n)=(U,\bm x)$ is called a \emph{relative system of coordinates about $p$} in this text.
Given such a relative system, we shall find useful to let $\{\pd_{x_j}\}$ stand for the basis of  $\mm{Der}_S(\co_X)|_U=\ch\!om(\Om_{X/S}^1,\co_X)|_U$ dual to $\{  dx_j\}$. We also put $\vt_{x_j}=x_j\pd_{x_j}$.

The following definition is central (and standard): 
\begin{dfn}\label{02.10.2019--1}Let $Y\subset X$ be a closed complex subspace defined by the ideal $\ci\subset\co_X$ \cite[Expos\'e 9, 2.2]{sc} and $p$ a point of $Y$.

\begin{enumerate}[(1)]
\item A relative system of coordinates  $(U,\bm x)$ about $p$ is called {\it adapted} to $Y$ if   $\ci|_U=x_1\cdots x_m\co_U$ for a certain $m\le n$.  
\item We say that $Y$ is a {\it relative effective divisor with normal crossings} if for each $p\in Y$, it is possible to find a relative system of coordinates about $p$ which is adapted to $Y$. 
\item The number $m$ appearing above is called the {\it number of branches}   of $Y$     in $(U,\bm x)$. 
\end{enumerate} 
\end{dfn}

To lighten the text, we will {\it abandon} the adjective ``effective'' in ``effective relative divisor with normal crossings'' in what follows.  

Let us now fix a relative divisor with normal crossings $Y\subset X$ and write $\ci$ for its ideal; as customary, $\co_X(Y)$ denotes the invertible $\co_X$-module $\ci^{-1}$ \ega{IV}{4}{21.2.8,p.260} and $\co_X(kY)$ is $\co_X(Y)^{\ot k}$.


Let us now discuss  {\it meromorphic functions} with poles on $Y$. We let $\co_X(*Y)$ stand for the sheaf of $\co_X$-algebras   $\co_X[\ci^{-1}]$ described in \ega{IV}{4}{20.1.1, 226ff}; it is clearly a coherent sheaf of rings on $X$ which   shall be called the sheaf of {\it meromorphic functions with poles on $Y$}. Note that, if $\ci_p=h_p\co_{X,p}$, then the natural morphism 
\[
\co_{X}(*Y)_p\arou\sim\co_{X,p}[1/h_p]
\]
is an isomorphism \ega{IV}{4}{20.1.1.1, p.226}. Alternatively,  $\co_X(*Y)$ can be defined as  the $\co_X$--algebra   $\lid_k\co_X(kY)$. 

As suggested in   \cite{malgrange},  coherent $\co_X(*Y)$-modules shall also be called \emph{$Y$-meromorphic coherent modules on $X$}; the natural base-change functor $\coh{\co_X}\to\coh{\co_X(*Y)}$ is denoted by $\cf\mapsto\cf(*Y)$.

A fundamental property of $Y$-meromorphic coherent modules follows easily from R\"uckert's Nullstellensatz \cite[Ch. 3, \S2, p. 67]{grauert-remmert84}:

\begin{lem}[{\cite[2.3, p.154]{malgrange}}]\label{15.10.2019--1}
If $\cm$ is a $Y$-meromorphic coherent  module supported at $Y$, then $\cm=0$. \qed
\end{lem}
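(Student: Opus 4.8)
The plan is to reduce everything to a purely local statement about a coherent $\co_X$-module and then invoke R\"uckert's Nullstellensatz. The assertion $\cm=0$ is local, so I fix a point $p\in X$ and aim to show $\cm_p=0$. If $p\notin Y$ this is immediate, since $\mm{supp}(\cm)\subseteq Y$ already gives $\cm_p=0$. So I assume $p\in Y$ and work on a neighbourhood adapted to $Y$ (Definition \ref{02.10.2019--1}), where $\ci$ is generated by $h:=x_1\cdots x_m$; recall from the discussion preceding the lemma that then $\co_X(*Y)_p\cong\co_{X,p}[1/h]$.

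The first step is to descend $\cm$ to an honest coherent $\co_X$-module. Since $\cm$ is coherent over $\co_X(*Y)$, near $p$ it admits a finite presentation $\co_U(*Y)^a\to\co_U(*Y)^b\to\cm\to0$; rescaling the generators of the source by suitable powers of the invertible function $h$ clears denominators and lets me represent the presenting morphism by a matrix with entries in $\co_U$. Writing $\cn$ for the cokernel of the resulting morphism $\co_U^a\to\co_U^b$, I obtain a coherent $\co_X$-module (cokernel of a morphism of finite free $\co_X$-modules) together with a canonical identification $\cm=\cn(*Y)$, i.e. $\cm_q=\cn_q[1/h]$ for every $q\in U$. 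Over $U\smallsetminus Y$ the function $h$ is invertible, so $\cn_q=\cm_q$ there, and the hypothesis $\mm{supp}(\cm)\subseteq Y$ therefore forces $\mm{supp}(\cn)\subseteq Y$ as well.

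The problem is now one of local analytic algebra. Let $\mm{Ann}(\cn_p)\subseteq\co_{X,p}$ denote the annihilator. Because $\cn$ is coherent, its support coincides near $p$ with the analytic germ $V(\mm{Ann}(\cn_p))$, and the inclusion $V(\mm{Ann}(\cn_p))=\mm{supp}(\cn)\subseteq Y=V(h)$ yields, upon applying R\"uckert's Nullstellensatz \cite[Ch. 3, \S2, p.67]{grauert-remmert84}, the relation $h\in\sqrt{\mm{Ann}(\cn_p)}$. Consequently $h^N\cn_p=0$ for some $N$, and hence $\cm_p=\cn_p[1/h]=0$, which is what we wanted.

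The only genuine subtlety lies in the first step, namely the construction of the coherent $\co_X$-lattice $\cn$ with $\cm=\cn(*Y)$; once it is granted that every coherent $\co_X(*Y)$-module is the localisation of a coherent $\co_X$-module, the statement collapses to the standard fact that the support of a coherent $\co_X$-module is the analytic zero locus of its annihilator, combined with the Nullstellensatz. I expect everything after the descent to $\cn$ to be routine.
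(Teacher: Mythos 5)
Your argument is correct and follows exactly the route the paper indicates: it leaves the lemma to \cite[2.3, p.154]{malgrange} but explicitly frames it as a consequence of R\"uckert's Nullstellensatz, which is precisely the engine of your proof (local descent to a coherent $\co_X$-lattice $\cn$ with $\cm=\cn(*Y)$, followed by $h\in\sqrt{\mm{Ann}(\cn_p)}$ and hence $h^N\cn_p=0$). No gaps; the descent step via clearing denominators in a finite presentation and the exactness of $(-)\otimes_{\co_X}\co_X(*Y)$ are both sound.
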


\begin{cor}\label{16.06.2016--2}Let $E$ and $F$ be coherent $\co_X(*Y)$--modules. If $X^*=X\smallsetminus Y$, then  the restriction arrow
\[
\hh{\co_X(*Y)}{E}{F}\aro  \hh{\co_{X^*}}{E|_{X^*}}{F|_{X^*}}
\] 
is injective. 
\end{cor}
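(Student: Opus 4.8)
The plan is to reduce the statement to Lemma \ref{15.10.2019--1}. Let $\ph\colon E\to F$ be a morphism of coherent $\co_X(*Y)$-modules whose restriction $\ph|_{X^*}\colon E|_{X^*}\to F|_{X^*}$ vanishes. I want to conclude that $\ph=0$. The kernel $\cm=\ker(\ph)$ is a coherent $\co_X(*Y)$-submodule of $E$ — here I use that $\co_X(*Y)$ is a coherent sheaf of rings (stated in the excerpt), so that kernels of morphisms between coherent modules are again coherent. Dually, the image $\ph(E)\subset F$ is a coherent $\co_X(*Y)$-module, and it is this image that I will show is supported on $Y$.

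More precisely, the assumption $\ph|_{X^*}=0$ says exactly that the image sheaf $\ph(E)$ restricts to zero on the open set $X^*=X\smallsetminus Y$; hence the support of the coherent module $\ph(E)$ is contained in $Y$. The key step is then to recognise $\ph(E)$ as a $Y$-meromorphic coherent module — it is a quotient of $E$, hence a coherent $\co_X(*Y)$-module — which is supported on $Y$. Lemma \ref{15.10.2019--1} now applies verbatim and forces $\ph(E)=0$, i.e.\ $\ph=0$. This proves injectivity of the restriction map.

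The one point deserving care is the passage from ``the restriction of the \emph{morphism} to $X^*$ is zero'' to ``the \emph{image} module is supported on $Y$''. This is where the $\co_X(*Y)$-linearity and coherence are used: the support of a coherent sheaf is closed, and a coherent sheaf whose restriction to an open set is zero has support disjoint from that open set. Since $X^*$ is open and the restriction $\ph(E)|_{X^*}=\ph|_{X^*}(E|_{X^*})=0$, we get $\mm{supp}\,\ph(E)\cap X^*=\varnothing$, i.e.\ $\mm{supp}\,\ph(E)\subset Y$. I expect this to be the main (and essentially the only) obstacle, and it is mild; everything else is formal manipulation of coherent modules over the coherent ring sheaf $\co_X(*Y)$, together with a single invocation of Lemma \ref{15.10.2019--1}.
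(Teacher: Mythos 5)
Your argument is exactly the paper's: the authors also pass to the coherent $\co_X(*Y)$-module $\mm{Im}(\ph)$, observe it is supported on $Y$ because $\ph$ vanishes on $X^*$, and invoke Lemma \ref{15.10.2019--1}. The proposal is correct and adds only a careful spelling-out of the support argument that the paper leaves implicit.
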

\begin{proof}Let $\ph:E\to F$ vanish on $X^*$. Working with the coherent $\co_X(*Y)$--module $\mm{Im}(\ph)$, we only need to use Lemma \ref{15.10.2019--1}.  
\end{proof}


\subsection{Connections}\label{25.09.2019--1}

Let $f:X\to S$ be a smooth morphism between analytic spaces. We shall introduce several notations concerning categories of connections. 

\begin{dfn}We let $\g C(X/S)$ stand for the category of integrable $S$-connections $(E,\na:E\to E\ot\Om_{X/S}^1)$, see \cite[1.0-1]{katz70} or \cite[I.2.22]{deligne70}, such that $E$ is a  \emph{coherent} $\co_X$-module. 
\end{dfn} 

For the sake of brevity, in what follows, the word {\it connection} is   synonymous with {\it integrable connection}. With this convention,  $\g C(X/S)$ is the category of $S$-connections. 

Let now $Y\subset X$ be a relative divisor with normal crossings (cf. Definition \ref{02.10.2019--1}); write $\ci$ for the ideal defining $Y$ and $X^*$ for the open subspace $X\smallsetminus Y$.

\begin{dfn}\label{29.01.2020--1}We let  $\g C^{\log}_Y(X/S)$ stand for the category of integrable logarithmic $S$-connections $(E,\na)$, see  \cite[4.0-2]{katz70}, where they are called integrable $S$-connections on $E$ with logarithmic singularities along $Y$, or \cite[I.2.22, 14ff]{deligne70},  such that  the underlying $\co_X$-module $E$ is coherent. 
\end{dfn} 
As in the case of connections, we shall drop the adjective {\it integrable} in ``integrable logarithmic connections.'' 

Note that   $\g C^{\log}_Y(X/S)$ is abelian and has a tensor product rendering the forgetful functor $\g C_Y^{\log}(X/S)\to\coh{\co_X}$ exact,   faithful and tensorial. In addition, for $\ce\in\g C^{\log}_Y(X/S)$, the coherent $\co_X$-module $\ch\!om_X(\ce,\co_X)$ also carries a natural logarithmic connection, see \cite[4.3]{katz70}. 

If $\cd\!er_Y({X/S})$ is the $\co_X$-submodule of $\cd\!er({X/S})$ formed by the derivations which stabilize the ideal $\ci$---it is actually a sheaf of $\co_S$-Lie algebras---then a logarithmic connection on $E\in\coh{\co_X}$ amounts to a morphism of $\co_X$-modules $\na:\cd\!er_Y({X/S})\to\ce\!nd_{\co_S}(E)$ which is compatible with brackets and satisfies $\na(\pd):ae\mapsto \pd(a)e+a\na(\pd)(e)$. See also \cite[4.2]{katz70}.

\begin{ex}\label{19.12.2019--5}The ideal   $\ci=\co_X(-Y)$ carries a tautological logarithmic connection turning it into a  subobject of $\co_X$. Along these lines, each one of the line bundles $\co_X(kY)$ carries a logarithmic connection. 

In general, given $\cf\in\g C_Y^{\log}(X/S)$, we let $\cf(kY)$ stand for the tensor product $\cf\ot_{\co_X}\co_X(kY)$ in $\g C_Y^{\log}(X/S)$. 
\end{ex}

Needless to say, each  $(E,\na)\in\g {C}_Y^{\log}(X/S)$ produces, by restriction to $X^*$,  an $S$-linear connection. Reversing this process is    important for the present work, so we make the:
\begin{dfn}\label{04.11.2019--7}Let $(E,\na)\in \g {C}(X^*/S)$. Any $(\wt E,\wt\na)\in\g C^{\log}_Y(X/S)$ which, when restricted to $X^*$, is isomorphic to $(E,\na)$ shall be called a {\it logarithmic model} of $(E,\na)$. 
\end{dfn}
It should be observed that in the above definition, no assumption is made a priori on the nature of the coherent model. Some simple adjustments can be made, but these shall be left to Section \ref{17.12.2019--1}. 


Because of the sheaf of $Y$--meromorphic functions on $X$, we have yet another relevant category of connections in sight. 

\begin{dfn}\label{04.11.2019--8}
Let $\ce$ be a $Y$-meromorphic coherent module. 
An {\it integrable connection} on $\ce$ is a morphism of $\co_S$-modules 
\[
\na:\ce\aro\ce\otu{\co_X(*Y)}\Om_{X/S}^1(*Y)
\]
having vanishing curvature.  The couple $(\ce,\na)$ is called an \emph{integrable $Y$-meromorphic connection}. 
Morphisms are simply morphisms of $\co_X(*Y)$-modules which respect the connections. The category of integrable $Y$-meromorphic connections is denoted by $\g{MC}_Y(X/S)$.
\end{dfn} 
As above, we shall drop the adjective {\it integrable} in ``integrable $Y$-meromorphic connection.''  

Note also that the base-change functor $\coh{\co_X}\to\coh{\co_X(*Y)}$ defines a functor $\g C_Y^{\log}(X/S)\to\g{MC}_Y(X/S)$ and this prompts us to consider yet another relevant category of connections:
\begin{dfn}\label{04.11.2019--9}Let $(\ce,\na)\in\g{MC}_Y(X/S)$ be given. We say that $(\ce,\na)$ has {\it at worst logarithmic poles} if, for each $p\in X$, there exists an open neighbourhood $U$ of $p$ such that $(\ce,\na)|_U\in\g {MC}_{Y\cap U}(U/S)$ belongs to the image of $\g C^{\log}_{Y\cap U}(U/S)$. The full subcategory of $\g{MC}_Y(X/S)$ having as objects those connections having at worst logarithmic poles shall be denoted by $\g {MC}_Y^{\log}(X/S)$.
\end{dfn}

\begin{rmk}If $S$ is the reduced analytic space associated to a point, we shall usually omit reference to it in notation, that is, in this case, we prefer $\g C(X^*)$ to $\g C(X^*/S)$, etc. 
\end{rmk}

To avoid confusions---the algebraic geometer's reflexes tend to downplay the role of $\g{MC}_Y^{\log}(X/S)$---the reader is asked to bear in mind the following diagram of categories: 
\[
\xymatrix{\g C^{\log}_Y(X/S )\ar[d]^{\text{not nec. full}}&   \\ \g{MC}_Y^{\log}(X/S)\ar[dd]\ar[dr]^{\text{fully faithful}}\\ & \g{MC}_Y(X/S)\ar[ld]^{\text{ faithful}}\\  \g C(X^*/S)& }
\]

\begin{ex}Let $S$ be the point and $X$ be the open unit disc in the complex plane with coordinate function $x$. Let $Y$ be the origin and $\cl=\co_X e$ be a free $\co_X$-module. Define  $
\na_0(e)=0$,  $\na_1( e)= e\ot x^{-1}\mm dx$. Then both $(\cl,\na_0)$ and  $(\cl,\na_1)$ are objects of $ {\g C}_Y^{\log}(X)$. Putting $\na_2(e)=e\ot x^{-2}\mm dx$, we define the object   $(\cl(*Y),\na_2)$  of $\g{MC}_Y(X)$. Note that the images of $(\cl,\na_0)$ and $(\cl,\na_1)$ in $\g{MC}_Y(X)$ are isomorphic, but $(\cl(*Y),\na_2)$ is not isomorphic to any each one of the latter. On the other hand,  the images of $(\cl(*Y) ,\na_2)$ and $(\cl,\na_0)$ in  $\g C(X^*)$ are isomorphic. 
\end{ex}

Let us end this section by highlighting a theorem of Deligne which will prove useful further below. In it, we need the notion of a relative local system \cite[I.2.22, p.14]{deligne70}. 

\begin{dfn}\label{19.12.2019--1}A sheaf of $f^{-1}(\co_S)$-modules $\mathds E$ on $X$ is called a relative local system if for each $p\in X$, there exist open neighbourhoods $U$ of $p$ and $V$ of $f(p)$ with $f(U)\subset V$,  and $M\in\coh{\co_S}$ such that $\mathds E|_U=f^{-1}(M)$.
Morphism between relative local systems are  simply  $f^{-1}(\co_S)$-linear arrows, and the category constructed from this information is denoted by   $\bb{LS}(X/S)$.  
\end{dfn}

 \begin{thm}[{\cite[I.2.23, 14ff]{deligne70}}]\label{10.06.2016--2}Let 
$f:X\to S$ be a smooth morphism of complex  spaces and $(E,\na)$ an object of $\g C(X/S)$.  Then the $\co_S$-module   $\mm{Ker}\,\na$ is a relative local system. In addition, the functor
\[
\g C(X/S)\aro \bb{LS}(X/S), \quad(E,\na)\longmapsto\mm{Ker}\,\na
\]
is an equivalence of categories. 
 \end{thm}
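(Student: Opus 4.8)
The plan is to prove Theorem~\ref{10.06.2016--2} by reducing the global statement to a purely local one and then establishing the local equivalence explicitly using relative coordinates. The key observation is that the assertion ``$\mm{Ker}\,\na$ is a relative local system'' is local on $X$, and the functor $(E,\na)\mapsto\mm{Ker}\,\na$ together with its inverse will be built locally and glued. So first I would fix $p\in X$ and choose a relative system of coordinates $(U,\bm x)$ about $p$, identifying $U$ with $D\ti V$ where $D$ is a polydisk in $\CC^n$ about the origin and $V$ is a neighbourhood of $f(p)$ in $S$. In these coordinates a connection $\na$ on a coherent $\co_X$-module $E$ is determined by the operators $\na(\pd_{x_j})$, and integrability means these commute.

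Next I would address the structure of $E$ itself: a coherent module carrying an integrable $S$-connection must in fact be locally free (a vector bundle). The standard argument is that the connection lets one propagate a basis of the fibre $E(p)$ over the fibre $X_{f(p)}$ and, via flatness over $\co_S$, over the whole neighbourhood; one shows $E$ is locally free by producing horizontal generators, using that $\co_X$ over $\co_S$ looks like a relative polydisk. Once $E$ is locally free, the heart of the matter is the relative version of the classical Cauchy--Kovalevskaya / Frobenius theorem: given the commuting system $\na(\pd_{x_j})$ over $D\ti V$, there is a unique horizontal frame extending any chosen frame of $E|_{\{0\}\ti V}$, obtained by solving the integrable system of first-order holomorphic ODEs with parameters in $V$ and initial data along $\{0\}\ti V$. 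Concretely, I would solve $\na(\pd_{x_1})s=0$ first, then $\na(\pd_{x_2})s=0$, and so on, with integrability guaranteeing that each successively solved equation is preserved; holomorphic dependence on the base parameters $V$ is automatic because the coefficients are holomorphic there and no differentiation in the $S$-direction occurs. This produces a horizontal frame, whence locally $\mm{Ker}\,\na=f^{-1}(M)$ for a free $\co_S$-module $M$ of rank equal to $\mm{rk}\,E$, which is precisely the definition of a relative local system.

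With the local picture in hand I would construct the quasi-inverse functor $\bb{LS}(X/S)\to\g C(X/S)$. Given a relative local system $\mathds E$, the natural candidate is $E:=\mathds E\ot_{f^{-1}(\co_S)}\co_X$ with the connection $\na$ determined by declaring the sections of $\mathds E$ to be horizontal, i.e.\ $\na(e\ot a)=e\ot\mm da$ for $e$ a local section of $\mathds E$ and $a\in\co_X$. Locally, where $\mathds E|_U=f^{-1}(M)$, this tensor product is a free $\co_U$-module, so $E$ is coherent (indeed a vector bundle) and $\na$ is manifestly integrable since $\mm d\circ\mm d=0$. The two functors are mutually quasi-inverse: the composite $\mathds E\mapsto(\mathds E\ot\co_X,\na)\mapsto\mm{Ker}\,\na$ recovers $\mathds E$ because the horizontal frame of $\mathds E\ot\co_X$ is exactly $\mathds E$ by the local computation above; conversely, for $(E,\na)$ the evaluation map $(\mm{Ker}\,\na)\ot_{f^{-1}(\co_S)}\co_X\to E$ is an isomorphism, which one checks on the horizontal frame produced by the solvability theorem.

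I expect the main obstacle to be the relative solvability step, namely proving existence, uniqueness, and holomorphic dependence on the base of horizontal sections for the integrable system over $D\ti V$, together with establishing that a coherent module with connection is automatically a vector bundle. Over a reduced point this is the ordinary Frobenius theorem, but here the coefficients and solutions must depend holomorphically on the base point of $V$ in the sense of complex spaces $S$ (which may be non-reduced or singular), so one cannot blindly invoke the real-analytic parametric ODE theory; instead I would argue that since the connection differentiates only in the relative (fibre) directions $\pd_{x_j}$, the parameter $V$ enters purely as holomorphic coefficients and the usual power-series construction of solutions converges with holomorphic coefficients, giving sections of $\co_{D\ti V}$ by the definition of the structure sheaf of the product complex space. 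The verification that horizontal sections generate $E$ as an $\co_X$-module --- hence that $E$ is locally free and the counit is an isomorphism --- is the technical crux, and I would handle it by Nakayama-type reduction to the fibre over $f(p)$ combined with the solvability theorem. Since this is Deligne's theorem cited from \cite[I.2.23]{deligne70}, I would in practice invoke the reference for the analytic input and present the categorical equivalence as the assembly of these local constructions.
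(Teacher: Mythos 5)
First, note that the paper itself offers no proof of this statement: it is quoted directly from \cite[I.2.23]{deligne70}, so the only comparison available is with Deligne's argument, which your sketch essentially reproduces in outline. The architecture of your plan --- localize to $D\ti V$ via relative coordinates, solve the integrable system of relative ODEs with holomorphic dependence on the base, and build the quasi-inverse $\mathds E\mapsto \mathds E\otu{f^{-1}(\co_S)}\co_X$ --- is the right one.

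There is, however, one genuinely false intermediate step: the claim that a coherent $\co_X$-module carrying an integrable $S$-connection ``must in fact be locally free (a vector bundle)'', together with the assertion that locally $\mm{Ker}\,\na=f^{-1}(M)$ for a \emph{free} $\co_S$-module $M$ of rank $\mm{rk}\,E$. This fails as soon as $S$ is not reduced, which is precisely the case this paper cares about: take $S=\mm{Specan}\,\CC[t]/(t^2)$, $X=D\ti S$, and $E=\co_X/t\co_X\simeq\co_D$ with the trivial connection $\mm d$. This is a coherent $\co_X$-module with integrable relative connection that is not locally free over $\co_X$, and its sheaf of horizontal sections is $f^{-1}(M)$ with $M=\La/(t)$ not free over $\La=\CC[t]/(t^2)$. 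This is exactly why Definition \ref{19.12.2019--1} only requires $M$ to be a \emph{coherent} $\co_S$-module, and why the paper introduces the weaker notion of ``vector bundle relatively to $S$'' (Definition \ref{24.06.2016--1b}). The correct local structure theorem is that $E\simeq f^*M$ compatibly with the connections, for an arbitrary $M\in\coh{\co_S}$; so your solvability argument should not aim at producing a horizontal \emph{frame}, but at producing a horizontal coherent $\co_S$-module (namely $E|_{\{0\}\ti V}$ transported horizontally) for which the counit $f^{-1}(M)\otu{f^{-1}(\co_S)}\co_X\to E$ is an isomorphism. With that correction --- and with the corresponding adjustment to your verification of the unit and counit, which can no longer be ``checked on a frame'' --- the rest of the plan goes through and coincides with Deligne's proof.
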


\section{Extending arrows between meromorphic connections with at worst logarithmic poles}  \label{29.01.2020--3}

 We let $S$ denote the analytic spectrum of $\La$ and give ourselves a smooth morphism   $f:X\to S$  of complex  spaces. 
Let $Y\subset X$ be a divisor with relative normal crossings in $X$ (Definition \ref{02.10.2019--1}) and denote, as usual, $X\smallsetminus Y$ by $X^*$. 

A founding principle of Fuch's theory of regular singularities is that ``solutions are meromorphic''; in this section we wish to show how to adapt Deligne's version of this principle (see \cite[II.4.1, p.86]{deligne70} and \cite[pp. 168-9]{malgrange}) to our present setting. Section \ref{03.05.2016--1} develops the artifice allowing us the adaptation while Section \ref{04.11.2019--3} states and proves the main result (Theorem \ref{21.10.2019--1}). The latter result shall be amplified in Section \ref{04.11.2019--4} after the introduction of residues and exponents in Section \ref{18.12.2019--1}.

\subsection{Local description of categories of sheaves and connections}\label{03.05.2016--1}
We begin by some categorical remarks.  Let $\cC$ be a $\CC$-linear category. A couple $(c,\al)$ consisting of an object $c\in\cC$ and a morphism of $\CC$-algebras $\al:\La\to \mm{End}_{\cC}(c)$ is called a $\La$-module of $\cC$. Morphisms of $\La$-modules are defined in an obvious way and the category of $\La$-modules in $\cC$ is denoted by $\cC_{(\La)}$.

\begin{ex}Let $A$ be an associative   $\CC$-algebra and let  $\cC$ be the category of finitely generated $A$-modules (on the left say). Then, $\cC_{(\La)}$ is just the category of finitely generated $A\ot_\CC\La$-modules. If $(M,\ca)$ is a topological space with a sheaf of   $\CC$-algebras and $\cC$ is the category of finitely generated $\ca$-modules, then $\cC_{(\La)}$ is just the category of finitely generated $\ca\ot_\CC\La_M$-modules. We notice that in the discussion here, we use repeatedly the assumption that $\Lambda$ has finite dimension over $\CC$. 
\end{ex}

Let 
$D$ be a domain in $\CC^n$ and $H\subset D$ an effective divisor. Suppose that $X=D\ti S$ and $Y=H\ti S$; write  $\mm{pr}:X\to D$ for the canonical projection. 
It follows that $|X|=|D|$, $\co_X=\co_{D}\ot_\CC\La_X$, and $\co_X(*Y)=\co_{D}(*H)\ot_\CC\La_X$.
Denote by $\mm{pr}_*$  any one of the natural functors 
\[
\coh{\co_X}\aro\coh{\co_D} \quad\text{or}\quad
\coh{\co_X(*Y)} \aro\coh{\co_D(*H)}.
\] 
Then $\mm{pr}_*$ induces an equivalence between the categories on the left and the respective categories of $\La$-modules.

Let $\na:\ce\to \ce\ot_{\co_X}\Om_{X/S}^1$ be a connection. Since the natural morphism
\[
 \mm{pr}^*(
\Om_{D}^1 ) \aro \Om_{X/S}^1
\] 
is an isomorphism \cite[Expos\'e 14, 2.1]{sc}, 
the projection formula guarantees that the natural morphism 
\[
\mm{pr}_*(\ce)\ot_{\co_{D}}\Om_{D}^1\aro \mm{pr}_*\left(\ce\ot_{\co_X}\Om_{X/S}^1\right)
\]
is an isomorphism. Furthermore, it is easy to see that the composition 
\[\xymatrix{
\mm{pr}_*\ce\ar[r]^-{\mm{pr}_*(\na)} & \mm{pr}_*(\ce\ot\Om_{X/S}^1) \ar[r]^\sim& \mm{pr}_*\ce\ot_{\co_{D}}\Om_{D}^1
}\] 
is also a connection. From these considerations we in fact   obtain equivalences 
\[
\om:\g C(X/S)\aro \g C(D)_{(\La)}\quad\text{and}\quad \om:\g{MC}_Y(X/S)\aro \left[\g{MC}_{H}(D)\right]_{(\La)}.
\]

We end this section by fixing a definition which will be employed further below.

\begin{dfn}\label{24.06.2016--1}We say that $\ce\in(\coh{\co_D})_{(\La)}$  is a $\La$-vector bundle if each $p\in D$ has an open neighbourhood $U$ such that   $\ce|_U\simeq \co_U\ot_\CC E$ for some finite $\La$-module $E$  and,  for each $\la\in\La$, we have $\la\po(1\ot e)=1\ot\la e$. Analogously, $\ce\in\coh{\co_X}$ is a relative vector bundle, or a vector bundle relatively to $S$, if $\mm{pr}_*(\ce)\in(\coh{\co_D})_{(\La)}$ is a $\La$-vector bundle. 
\end{dfn}

The verification of the following is trivial but shall be employed many times.  
\begin{lem}\label{16.06.2016--1}If $\ce\in(\coh{\co_D})_{(\La)}$ is a $\La$-vector bundle, then $\mm{pr}_*\ce$ is a vector bundle. \qed
\end{lem}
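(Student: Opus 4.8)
The plan is to reduce at once to the local model furnished by Definition \ref{24.06.2016--1} and to isolate the only substantive point: an elementary fact of commutative algebra, namely that a finitely generated $\La$-module is automatically finite-dimensional over $\CC$.

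First I would recall that, since $\mm{pr}:X\to D$ contracts the $S$-factor and induces a homeomorphism $|X|=|D|$ under which $\co_X=\co_D\ot_\CC\La_X$, the functor $\mm{pr}_*$ is nothing but restriction of scalars along the inclusion $\co_D\hookrightarrow\co_X$. Concretely, $\mm{pr}_*\ce$ is just the underlying $\co_D$-module of $\ce$, the $\La$-action being forgotten. Thus, whether one starts from the object of $\coh{\co_X}$ or directly from its image in $(\coh{\co_D})_{(\La)}$, the assertion to be proved is the same: this underlying $\co_D$-module is locally free of finite rank.

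Next I would fix $p\in D$ and use the defining property of a $\La$-vector bundle to choose an open $U\ni p$ together with an isomorphism $\ce|_U\simeq\co_U\ot_\CC E$ for some finite $\La$-module $E$. The crucial observation is that $E$ is then finite-dimensional over $\CC$: being finitely generated over $\La$, it is a quotient of some $\La^{\op n}$, and $\dim_\CC\La^{\op n}=n\dim_\CC\La<\infty$ because $\La$ is finite-dimensional over $\CC$; hence $\dim_\CC E<\infty$. Choosing a $\CC$-basis $e_1,\dots,e_d$ of $E$, with $d=\dim_\CC E$, identifies $\co_U\ot_\CC E$ with $\co_U^{\op d}$ as $\co_U$-modules.

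This exhibits $\mm{pr}_*\ce$ as locally free of finite rank about each point, which is exactly the definition of a vector bundle. There is no genuine obstacle here: the whole weight of the statement rests on the passage from ``finite over $\La$'' to ``finite-dimensional over $\CC$'', after which the freeness of $\co_U\ot_\CC E$ over $\co_U$ is automatic. The compatibility $\la\po(1\ot e)=1\ot\la e$ imposed in Definition \ref{24.06.2016--1} plays no role once the $\La$-structure is discarded, which is why the verification is, as announced, trivial.
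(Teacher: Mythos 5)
Your argument is correct and is exactly the verification the paper omits as ``trivial'': locally $\ce|_U\simeq\co_U\ot_\CC E$ with $E$ finite over $\La$, hence finite-dimensional over $\CC$ since $\dim_\CC\La<\infty$, so the underlying $\co_D$-module is locally $\co_U^{\op d}$. Nothing further is needed.
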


Note that $\ce\in\coh{\co_X}$ is a relative vector bundle if and only if for each $p\in X$  there exists an open neighbourhood $U$ of $p$ such that $\ce|_U$ is the pull-back of a coherent sheaf on $S$. This means that 
Definition \ref{24.06.2016--1}  possesses a natural global version; let us drop  the assumption that $X=D\ti S$. 

\begin{dfn}\label{24.06.2016--1b}
A coherent  $\co_X$-module $\ce$  is a \emph{relative vector bundle}, or a vector bundle relatively to $\La$, if,  locally, it comes from a coherent sheaf on $S$.
\end{dfn}



\subsection{Extending morphisms between  connections with at worst logarithmic poles}\label{04.11.2019--3}
We begin by recalling the following result, see  \cite[pp. 163-4]{malgrange}, specially Lemma 5.7. 

\begin{thm}\label{04.11.2019--5}Let $D\subset\CC^n$ be an open polydisk about the origin and denote by $x_j\in\co(D)$ the $j$th coordinate function. Let $H$ be the effective divisor defined by $x_1\cdots x_m\co_D$ and denote by $D^*$ the open subset $D\smallsetminus H$. Let us give ourselves   
  $(E,\na)$ and $(E',\na')$   objects of 
$\g{MC}^{\log}_H(D)$. Then any horizontal arrow $\ph:E|_{D^*}\to E'|_{D^*}$ extends uniquely to an arrow $\wt\ph:E\to E'$. \qed
\end{thm}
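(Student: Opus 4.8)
The plan is to reduce the statement, via the equivalences developed in Section \ref{03.05.2016--1}, to a question about $\La$-modules in the category of connections on $D$ and, ultimately, to the classical extension principle for connections with logarithmic poles over a point. Concretely, the functor $\om:\g{MC}_H(D)\to[\g{MC}_H(D)]_{(\La)}$ and Corollary \ref{16.06.2016--2} show that extending $\ph$ is equivalent to extending a horizontal arrow between two $\La$-module objects; the uniqueness part is automatic, since by Corollary \ref{16.06.2016--2} the restriction map $\hh{\co_D(*H)}{E}{E'}\to\hh{\co_{D^*}}{E|_{D^*}}{E'|_{D^*}}$ is injective, so there is at most one extension. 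Thus the entire content of the theorem is the \emph{existence} of $\wt\ph$.

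For existence, my first step would be to observe that the problem is local on $D$ along $H$, so I may shrink $D$ and assume both $(E,\na)$ and $(E',\na')$ genuinely arise from logarithmic connections, i.e. lie in the image of $\g C^{\log}_H(D)\to\g{MC}_H(D)$ (this is exactly the defining condition of $\g{MC}^{\log}_H$ in Definition \ref{04.11.2019--9}). Having replaced the meromorphic objects by honest logarithmic lattices $\ce,\ce'$, I would then pass through the equivalence $\om$ to view everything as logarithmic connections on the polydisk $D$ carrying compatible $\La$-actions. The key structural input is Malgrange's Lemma 5.7 (cited just before the statement), which handles precisely the extension of horizontal morphisms between logarithmic connections on a polydisk \emph{over a point}. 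The task is therefore to upgrade that result across the base $\La$.

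The main technical step, and the one I expect to be the principal obstacle, is to control the behaviour along the $\La$-direction. Because $\La$ is a \emph{finite-dimensional} local $\CC$-algebra, I would argue by a filtration of $\La$ by powers of its maximal ideal $\g m$, reducing successively to the graded pieces $\g m^k/\g m^{k+1}$, which are finite-dimensional $\CC$-vector spaces with trivial multiplication. At each stage the obstruction to extending $\ph$ lives in a $\mathrm{Hom}$ or $\mathrm{Ext}$ group of logarithmic connections over the point, and the $\CC$-linear extension result of Malgrange supplies the lift; the integrability (flatness) of the connections guarantees that the lifts assemble compatibly with the $\La$-action. The delicate point is checking that the extensions produced at each step are \emph{horizontal} for the full relative connection and not merely for the fibrewise one, which is where the compatibility of $\na$ with the $\La$-structure (built into the equivalence $\om$ and into Definition \ref{24.06.2016--1}) must be used carefully.

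Finally, once the extension $\wt\ph:\ce\to\ce'$ between the logarithmic lattices is constructed, I would tensor with $\co_D(*H)$ to obtain the desired morphism $\wt\ph:E\to E'$ in $\g{MC}_H(D)$, and invoke the uniqueness already established to glue the local extensions into a global one on all of $D$. The whole argument is thus an interpolation between the classical one-variable/one-point statement and the relative setting, with the finiteness of $\dim_\CC\La$ doing the essential work of making the inductive step terminate.
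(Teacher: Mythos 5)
Your proposal is aimed at the wrong statement. Theorem \ref{04.11.2019--5} is the \emph{absolute} case: the category $\g{MC}^{\log}_H(D)$ carries no $\La$-action at all (the base is a reduced point, per the notational convention of Section \ref{25.09.2019--1}), and the paper does not prove it --- it is recalled verbatim from Malgrange \cite[pp. 163-4, Lemma 5.7]{malgrange} and cited without proof. Your plan to ``reduce, via the equivalences of Section \ref{03.05.2016--1}, to the classical extension principle for logarithmic connections over a point'' is therefore circular for this statement: the classical principle you propose to reduce to \emph{is} the statement. What you have actually sketched is a proof of the relative version, Theorem \ref{21.10.2019--1}, which is the next result in the section.

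Even read as a proof of that relative version, the middle of your argument both overshoots and leaves a real gap. You propose to filter $\La$ by powers of $\g m$, lift $\ph$ through the graded pieces $\g m^k/\g m^{k+1}$, locate obstructions in $\mathrm{Hom}$ or $\mathrm{Ext}$ groups of logarithmic connections over the point, and then check that the assembled lift is horizontal for the full connection. You never explain why those obstructions vanish, and that is precisely where such an induction would get stuck. The paper's route avoids the issue entirely: apply the absolute Theorem \ref{04.11.2019--5} to $\om(E)$ and $\om(E')$ to obtain \emph{some} extension $\wt{\om\ph}$ of $\om\ph$ in $\g{MC}_H(D)$, and then observe that $\wt{\om\ph}$ is automatically $\La$-linear because its restriction to $D^*$ is $\La$-linear and the restriction map on $\mathrm{Hom}$ groups is injective (Lemma \ref{16.06.2016--2}). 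Uniqueness of the extension does all the work your induction was meant to do; no compatibility of lifts ever has to be verified.
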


Employing the technique of Section \ref{03.05.2016--1}, we can prove the following. 

\begin{thm}\label{21.10.2019--1}The restriction functor 
\[
\g{MC}_Y^{\log}(X/S)\aro\g C(X^*/S)
\]
is fully faithful.  
\end{thm}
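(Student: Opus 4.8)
The plan is to verify faithfulness and fullness of the restriction functor separately, the former being immediate and the latter reducing, through the local dictionary of Section \ref{03.05.2016--1}, to the absolute statement already recorded in Theorem \ref{04.11.2019--5}.

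Faithfulness is nothing but Corollary \ref{16.06.2016--2}. Indeed, any morphism in $\g{MC}^{\log}_Y(X/S)$ is in particular a morphism of the underlying coherent $\co_X(*Y)$-modules, and Corollary \ref{16.06.2016--2} asserts precisely that such morphisms are determined by their restriction to $X^*$; hence the induced map on $\mm{Hom}$-sets is injective. For fullness, I would start with objects $(\ce,\na)$ and $(\ce',\na')$ of $\g{MC}^{\log}_Y(X/S)$ together with a horizontal arrow $\ph:\ce|_{X^*}\to\ce'|_{X^*}$ in $\g C(X^*/S)$, and produce a horizontal $\wt\ph:\ce\to\ce'$ restricting to $\ph$. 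Because $S$ is the analytic spectrum of the finite-dimensional local algebra $\La$, it is a fat point, so each $p\in Y$ admits a relative system of coordinates adapted to $Y$ (Definition \ref{02.10.2019--1}) realising a neighbourhood $U$ as $D\ti S$ with $Y\cap U=H\ti S$, where $D\subset\CC^n$ is a polydisk and $H=\{x_1\cdots x_m=0\}$; over $X^*$ itself the sought extension is $\ph$. Over such a $U$ I would invoke the equivalence $\om:\g{MC}_{Y}(U/S)\to[\g{MC}_H(D)]_{(\La)}$ of Section \ref{03.05.2016--1}. Since the at-worst-logarithmic-poles condition is local and is transported by $\om$ onto the condition that the underlying object, forgetting the $\La$-action, lie in $\g{MC}^{\log}_H(D)$, the objects $\om(\ce|_U)$ and $\om(\ce'|_U)$ are $\La$-module objects whose underlying connections belong to $\g{MC}^{\log}_H(D)$.

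Now comes the heart of the argument. Forgetting the $\La$-structure, $\ph$ becomes a horizontal arrow on $D^*$ between two objects of $\g{MC}^{\log}_H(D)$, so Theorem \ref{04.11.2019--5} furnishes a unique horizontal extension $\wt\ph_U$ over all of $D$. To see that $\wt\ph_U$ is again $\La$-linear, I would fix $\la\in\La$ and compare the two horizontal arrows $\al'(\la)\circ\wt\ph_U$ and $\wt\ph_U\circ\al(\la)$, where $\al,\al'$ denote the structural actions; both restrict on $D^*$ to the same arrow because $\ph$ is $\La$-linear there, and the uniqueness clause of Theorem \ref{04.11.2019--5} (equivalently, the injectivity of Corollary \ref{16.06.2016--2}) forces them to agree on $D$. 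Thus $\wt\ph_U$ is a morphism of $\La$-module objects, and transporting it back through $\om$ yields a horizontal extension of $\ph$ over $U$ inside $\g{MC}^{\log}_{Y\cap U}(U/S)$.

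Finally, the same uniqueness guarantees that these local extensions agree on overlaps: two extensions over $U\cap U'$ both restrict to $\ph$ on $(U\cap U')\smallsetminus Y$ and so coincide by Corollary \ref{16.06.2016--2}. Covering $X$ by $X^*$ together with the adapted neighbourhoods of points of $Y$, the local pieces therefore glue to a global horizontal arrow $\wt\ph:\ce\to\ce'$ restricting to $\ph$, which completes the proof of fullness. The main obstacle is the passage from the absolute extension theorem to the relative one, and it is concentrated precisely in the $\La$-linearity of $\wt\ph_U$; the point worth stressing is that this is extracted for free from the \emph{uniqueness} in the absolute case, and requires no new analytic input.
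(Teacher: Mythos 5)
Your proposal is correct and follows essentially the same route as the paper: faithfulness via Corollary \ref{16.06.2016--2}, reduction of fullness to the local product case $X=D\ti S$, application of the absolute extension result (Theorem \ref{04.11.2019--5}) through the dictionary $\om$ of Section \ref{03.05.2016--1}, and recovery of $\La$-linearity (and the gluing on overlaps) from the injectivity of restriction to the complement of $Y$. You merely spell out more explicitly the two points the paper compresses into one line each, namely the comparison of $\al'(\la)\circ\wt\ph_U$ with $\wt\ph_U\circ\al(\la)$ and the patching of the local extensions.
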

\begin{proof}
From Lemma \ref{16.06.2016--2},  the restriction $\coh{\co_X(*Y)}\to\coh{\co_{X^*}}$
is faithful, so that 
$\g{MC}_Y^{\rm log}(X/S)\to\g C(X^*/S)$
is also faithful. 
Let $(E,\na)$ and $(E',\na')$ be in $\g{MC}_Y^{\rm log}(X/S)$ and let $\ph:E|_{X^*}\to E'|_{X^*}$ be  horizontal. We wish to find a horizontal arrow of coherent $\co_X(*Y)$-modules $\wt\ph:E\to E'$ whose restriction to $X^*$ 
is $\ph$. 

Due to uniqueness, it is enough to deal with the  problem locally:  we suppose that $X=D\ti S$ where $D\subset\CC^n$ is an open polydisk about the origin and that 
$Y=H\ti S$, where, writing $\{x_j\}$ for the coordinate functions on $D$, $H$ is defined by $\co_Dx_1\ldots x_m$.
This being the case, we have a commutative diagram whose horizontal arrows are equivalences, 
\[
\xymatrix{
\g{MC}_{Y }(X/S)\ar[d]\ar[rr]^-{\om}   && 
\g{MC}_H(D)_{(\La)}\ar[d]
\\
\g C(X^*/S)\ar[rr]_-{\om}  && \g C(D^*)_{(\La)},
}
\]
as explained on section \ref{03.05.2016--1}. 
We now consider the arrow $\om\ph:\om (E)|_{D^*}\to \om( E') |_{D^*}$ of $\g C(D^*)_{(\La)}$; by Theorem \ref{04.11.2019--5}, $\om\ph$ can be extended to an arrow $\wt{\om\ph}:\om (E)\to\om (E')$ of $\g {MC}_{H}(D)$.  
Because the restriction of $\wt{\om\ph}$ to $D^*$ is a morphism of $\La$-modules, Lemma \ref{16.06.2016--2} assures that $\wt{\om\ph}$ is a morphism of $\La$-modules.  Hence, there exists  $\wt\ph:E\to E'$  in $\g {MC}_{Y}(X/S)$ such that $\om(\wt\ph)=\wt{\om\ph}$. It is clear that $\wt\ph$ is an extension of $\ph$. 
\end{proof}

\section{Residues and exponents of   logarithmic connections}\label{18.12.2019--1}

We work with a smooth morphism of complex  spaces $f:X\to S$ and a  relative divisor with normal crossings $Y$ defined by $\ci\subset\co_X$. We suppose from the start that for each $p\in Y$, the local ring  $\co_{S,f(p)}$   has {\it only one associated prime}.  Finally, we also require

\begin{hyp}\label{14.10.2019--3}There exist a finite set $C$ and a family of smooth and connected relative effective divisors  $\{Y_c\}_{c\in C}$   indexed by $C$ such that $Y=\sum Y_c$. 
\end{hyp}

\subsection{Residue morphisms}\label{10.06.2016--1}

The objective of this section, Definition \ref{19.12.2019--2}, will need some subsidiary material to  identify the number of branches (Definition \ref{02.10.2019--1}) in terms of local algebra.    This is the reason of Lemma \ref{23.05.2016--1}, its Corollary and Lemma \ref{28.10.2019--1}, which are collected at the end. 

Put  \[Y_c^\dagger:=Y_c\smallsetminus\bigcup_{c'\not=c}Y_{c'} \] and note that 
      $Y\to S$ is  smooth on the points of $Y_c^\dagger$.

Let  
$(\ce,\na)\in\g C_Y^{\log}(X/S)$ be given and, for a fixed $c\in C$, pick  $p\in Y_c^\dagger$. 
Lemma \ref{28.10.2019--1}   enables us to find a relative  system of coordinates adapted to $p$, call it $(U,\bm x)$ (Definition \ref{02.10.2019--1}), such that $\ci|_U=x_1\co_U$. (In particular, $Y\cap U=Y_c\cap U$.) Write $\vt_j$ instead of $\vt_{x_j}$.   
A simple calculation 
 shows that $\na_{\vt_1}$ preserves the $\co_U$-submodule $x_1\ce|_U$ and that the induced morphism of   abelian sheaves  
\[
\mm{Res}_c^{\bm x}:\ce|_{U\cap Y_c}  \aro\ce|_{U\cap Y_c}
\]
is in fact   $\co_{Y_c}$-linear. 
Moreover,  the additive morphisms 
\[
\na_{\vartheta_j}:\ce|_U\aro\ce|_U,\qquad j>1,
\] 
also preserve $x_1\ce|_U$ and hence induce morphisms of abelian sheaves 
\[
\na_{\vartheta_j}:\ce|_{U\cap Y_c}\aro \ce|_{U\cap Y_c}. 
\]
In this way, we define on the $\co_{U\cap Y_c}$--module $\ce|_{U\cap Y_c}$ an $S$-connection $\na_c^{\bm x}$. It is then a simple task to show that $\mm{Res}_c^{\bm x}$ is horizontal for the $S$-connection $\na^{\bm x}_c$.

Now, let $(U',\bm x')$ be  a relative system of coordinates adapted to $Y$ about a point $p'\in U$ and such that $x_1'\co_{U'}=\ci|_{U'}$. (Again $U'\cap Y=U'\cap Y_c$.) 
Since   $x_1\co_{U\cap U'}=x_1'\co_{U\cap U'}$,  we conclude that $x_1'=v_1x_1$ for a certain $v_1\in\co(U\cap U')^\ti$. From this, 
\[\begin{split}
x_1\fr{\pd }{\pd x_1}&=x_1\fr{\pd x_1'}{\pd x_1}\po\fr{\pd}{\pd x_1'}+x_1\sum_{j>1}\fr{\pd x_j'}{\pd x_1}\po\fr{\pd}{\pd x_j'}\\
&=x_1'\fr{\pd}{\pd x_1'}+\fr{x_1'}{v^2_1}\fr{\pd v_1}{\pd x_1}\po x_1'\fr{\pd}{\pd x_1'}+\fr{x_1'}{v_1}\sum_{j>1}\cdots
\end{split}\]
and hence 
\[
\mm{Res}_c^{\bm x}=\mm{Res}_c^{\bm x'}
\]
on $Y_c^\dagger\cap U\cap U'$.  Thus, the $\co_{Y_c^\dagger}$--linear endomorphism 
\[
\mm{Res}_c:\ce|_{Y_c^\dagger}\aro\ce|_{Y_c^\dagger}
\]
is independent of the choice of a relative system of coordinates adapted to $Y$ at $p$.

\begin{dfn}\label{19.12.2019--2}The $\co_{Y_c^\dagger}$-linear endomorphism $\mm{Res}_c$ constructed 
above is called the residue endomorphism of $\ce$ along $Y_c^\dagger$. 
\end{dfn}

 We end by establishing the side-results employed in giving Definition \ref{19.12.2019--2}. 

\begin{lem}\label{23.05.2016--1}
Let $h:A\to B$ be a  morphism of local noetherian rings and let $\{b_1,\ldots,b_m\}$ be a set  of  elements of $B$ none of which divides zero or is invertible. Suppose that 
\begin{enumerate}[(1)]
\item The ring $A$ has only one    associated prime  $\g r$. 
\item For each $i$, the $A$-module $B/(b_i)$ is flat. 
\item For any $i$, the extended ideal  $\g p_i=(\g r,b_i)$ is prime. 
\end{enumerate}
Then $\g p_1,\ldots,\g p_m$ are the minimal prime divisors of $(b_1\cdots b_m)$ and $(b_1),\ldots,(b_m)$ are the corresponding primary components. 
\end{lem}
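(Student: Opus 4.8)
The plan is to read the statement as a claim about the primary decomposition of the principal ideal $(b_1\cdots b_m)$, and to treat each factor $(b_i)$ first in isolation before gluing them together.

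I would begin by unwinding hypothesis (1). Since $\g r$ is the \emph{only} associated prime of $A$ it is in particular the only minimal prime, hence the nilradical, so every element of $\g r$ is nilpotent and therefore so is every element of the extended ideal $\g rB$. This gives, for each $i$, the equality $\sqrt{(b_i)}=\g p_i$: the inclusion $\g rB\subseteq\sqrt{(b_i)}$ holds because its elements are nilpotent, whence $\g p_i=\g rB+(b_i)\subseteq\sqrt{(b_i)}$, while $\sqrt{(b_i)}\subseteq\g p_i$ is immediate from $(b_i)\subseteq\g p_i$ and the primality of $\g p_i$ granted by (3). Thus $\g p_i$ is the unique minimal prime over $(b_i)$, and since $b_i$ is a nonzerodivisor Krull's principal ideal theorem gives $\mm{ht}\,\g p_i=1$.

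The heart of the matter is to promote this to the assertion that $(b_i)$ is $\g p_i$-primary, equivalently $\mm{Ass}_B(B/(b_i))=\{\g p_i\}$; this is where the flatness hypothesis (2) is indispensable. On the one hand $\g p_i$, being the unique minimal prime of the support of $B/(b_i)$, certainly belongs to $\mm{Ass}_B(B/(b_i))$. On the other hand, as $B/(b_i)$ is $A$-flat and $\mm{Ass}(A)=\{\g r\}$ by (1), the flat base-change formula for associated primes \ega{IV}{2}{3.3.1} shows that every associated prime of $B/(b_i)$ lies over $\g r$ and corresponds to an associated prime of the generic fibre $(B/(b_i))\ot_A\ka(\g r)$. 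Now $(B/(b_i))\ot_A(A/\g r)=B/\g p_i$ is a domain by (3), and the generic fibre is merely a localization of it, hence has at most one associated prime. The two remarks together pin down $\mm{Ass}_B(B/(b_i))=\{\g p_i\}$, so $(b_i)$ carries no embedded component.

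It remains to assemble the global statement, namely that $(b_1\cdots b_m)=\bigcap_i(b_i)$ is an irredundant primary decomposition with minimal primes $\g p_1,\ldots,\g p_m$. I would proceed by induction on $m$, the step reducing to $(b_1\cdots b_{m-1})\cap(b_m)=(b_1\cdots b_m)$; granting the inductive hypothesis, the associated primes of $B/(b_1\cdots b_{m-1})$ are $\{\g p_1,\ldots,\g p_{m-1}\}$, so this equality is exactly the statement that the image of $b_m$ is a nonzerodivisor on $B/(b_1\cdots b_{m-1})$, i.e. that $b_m\notin\g p_i$ for $i<m$. This transversality---equivalently, that the height-one primes $\g p_1,\ldots,\g p_m$ are pairwise distinct---is the one point that does \emph{not} follow from (1)--(3) alone, as the choice $b_2=(1+t)b_1$ over $A=\pos\CC t$, $B=\pos\CC{t,x}$, $b_1=x$ already shows; it is the main obstacle, and it has to be imported from the geometry at hand, where the $b_i$ are the distinct transverse branches of an adapted coordinate system and $b_j$ visibly restricts to a nonzero function on $\spc(B/\g p_i)$. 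With transversality in force the induction closes, the intersection $\bigcap_i(b_i)$ becomes an irredundant primary decomposition, and reading off its components yields the two assertions of the lemma.
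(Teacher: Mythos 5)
Your treatment of the pieces the paper actually proves is essentially the paper's own argument. The paper likewise deduces $\mm{Ass}_B(B/(b_i))=\{\g p_i\}$ from the flatness of $B/(b_i)$ over $A$ together with $\mm{Ass}(A)=\{\g r\}$ (it cites Bourbaki AC IV.2.6, Theorem 2, which is the same flat base-change formula for $\mm{Ass}$ you invoke, the fibre $B/\g p_iB$ being a domain by (3)); it concludes that $(b_i)$ is $\g p_i$-primary, obtains $\mm{ht}(\g p_i)=1$ from the Hauptidealsatz plus the non-zero-divisor hypothesis, and identifies the minimal primes of $(b_1\cdots b_m)$ with the $\g p_i$ by the same nilpotence observation you make, namely $\g r=\mm{nil}(A)$, so that every prime of $B$ over $(b_1\cdots b_m)$ contains $\g rB$ and hence some $\g p_{i_0}$. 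Up to that point the two arguments coincide.

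The substance of your last paragraph is correct, and it points at a real defect. The clause ``$(b_1),\ldots,(b_m)$ are the corresponding primary components'' does require the $\g p_i$ to be pairwise distinct; this does not follow from hypotheses (1)--(3); and your counterexample is valid: for $A=\pos\CC t$, $B=\pos\CC{t,x}$, $b_1=x$, $b_2=(1+t)x$, all hypotheses hold, yet the $(x)$-primary component of $(b_1b_2)=(x^2)$ is $(x^2)$ and not $(x)=(b_1)$. What you could not see is that the paper's own proof never addresses this clause either: it stops after identifying the minimal primes, having shown only that each $(b_i)$ is \emph{a} $\g p_i$-primary ideal, not that it is \emph{the} $\g p_i$-primary component of the product. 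The ``transversality'' you say must be imported from the geometry is indeed supplied only downstream: in the proof of Lemma \ref{28.10.2019--1} the possibility that two local equations $h_c$, $h_{c'}$ generate the same ideal is excluded by the divisibility argument in the adapted coordinates ($x_{1,p}^2\nmid\prod_jx_{j,p}$), not by Lemma \ref{23.05.2016--1}. The clean repair is to add ``the primes $\g p_1,\ldots,\g p_m$ are pairwise distinct'' to the hypotheses --- after which your induction closes, or, more directly, one notes that $b_j\notin\g p_i$ for $j\ne i$ (a containment $\g p_j\subseteq\g p_i$ of height-one primes forces equality), so $(b_1\cdots b_m)B_{\g p_i}\cap B=(b_i)B_{\g p_i}\cap B=(b_i)$, which is exactly the primary component attached to the minimal prime $\g p_i$ --- or else to weaken the conclusion to what the paper actually proves and uses.
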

\begin{proof} To ease notation we write $B_i$ instead of $B/(b_i)$. Using   \cite[IV.2.6, Theorem 2]{bourbakiAC} we have   
\[
\begin{split}\mm{Ass}_B(B_i)&=\mm{Ass}_B(B_i/\g rB_i)\\&=\mm{Ass}_B(B/\g p_i)\\&=\{\g p_i\}. 
\end{split}
\]
This allows us to say, using \cite[Theorem 6.6,p.40]{mat}, that 
$(b_i)$ is $\g p_i$-primary.  
From the Hauptidealensatz, we have $\mm{ht}(\g p_i)\le1$. Now, if  $\mm{ht}(\g p_i)=0$, then $\g p_i$ is a minimal prime ideal containing $(0)$, which implies that $\g p_i\in\mm{Ass}(B)$ due to \cite[Theorem 6.5, p.39]{mat}; this is impossible because $b_i$ is not a zero divisor. In conclusion, $\mm{ht}(\g p_i)=1$.  As $b_1\cdots b_m$ is not a zero divisor, it follows that $\g p_i$ is a minimal prime containing  $(b_1\cdots b_m)$. Now, if $\g q\supset(b_1\ldots b_m)$
 is a prime, 
then $\g q\supset(b_{i_0})$ for a certain $i_0$.  Since $\g q\cap A\supset \g r$, we conclude that $\g q\supset \g p_{i_0}$.  Hence $\g p_1,\ldots,\g p_m$ are the minimal prime ideals containing   $(x_1\cdots x_m)$. In particular, they are also the minimal prime divisors \cite[Theorem 6.5, p.39]{mat}. 
\end{proof}

\begin{cor}\label{18.05.2016--1} Let $p\in Y$. Let $(U,\bm x)$ be a relative system of coordinates about  $p$ which is adapted to $Y$; denote the number of  branches of $Y$ in $(U,\bm x)$  by   $m$. Then $(x_{1,p}),\ldots,(x_{m,p})$ are the minimal primary components of $\ci_p$. \qed\end{cor}

For a given $p\in Y$, let  $C_p=\{c\in C\,:\,p\in Y_c\}$. 
  
\begin{lem}\label{28.10.2019--1}
  Let $(U,\bm x)$ be a relative system of coordinates adapted to $Y$ about $p$. Denote by $m$ the number of branches of $Y$ in $(U,\bm x)$.   There exist an open neighbourhood $V\subset U$ of $p$ and   a bijection $\si: C_p\to\{1,\ldots,m\}$ such that $x_{\si(c)}\co_V$ is the ideal of $Y_{c}$ in $V$. (In particular, $\#C_p$ is the number of branches.) 
\end{lem}
\begin{proof}  Let $h_c\in\co_{X,p}$ generate the ideal of $Y_c$ so that $\ci_p$ is generated by $\prod_{c\in C_p}h_c$. 
By Lemma \ref{23.05.2016--1}, the ideals $\{(h_c)\}_{c\in C_p}$ are the minimal primary components of $\ci_p$.  If $(U,\bm x)$ is as in (2),  Corollary \ref{18.05.2016--1} assures that  $\{(h_c)\}_{c\in C_p}=\{(x_{j,p})\}_{1\le j\le m}$. (Note that   $m$ is also the number of minimal primes of $\ci_p$.)
Let  $c_j\in C_p$ be such that $(h_{c_j})=(x_{j,p})$ and suppose that a certain  $c$ of $C_p$ lies outside $\{c_1,\ldots,c_m\}$. Then  $(h_c)\in \{(x_{j,p})\}_{1\le j\le m}$; suppose, for convenience, that $(x_{1,p})=(h_c)$. Hence $x_{1,p}^2$ divides $\prod_j x_{j,p}$ which means that $x_{1,p}$ divides $\prod_{j>1} x_{j,p}$ (the  product over the empty set being understood as the unit). This is impossible. 
Hence $\#C_p=m$ and the bijection $\si$ is obtained easily.   
\end{proof}

\subsection{Eigenvalues of endomorphisms}\label{30.05.2016--2}
In order to proceed with the study of $\mm{Res}_c$, we shall need some preliminary material on relative local systems. 
We gather here simple fact concerning endomorphism of finite modules and local systems.

\begin{lem}\label{10.10.2019--1}Let $E$ be a finite $\La$-module and $\ph:E\to E$ an endomorphism. Let $\om\ph:E\to E$ denote the $\CC$-linear endomorphism associated to $\ph$. 
\begin{enumerate}\item For each $\vr\in\mm{Sp}_{\om\ph}$, the subspace   $\bb G(\om\ph,\vr)$ is a $\La$-submodule. 
\item If $\ov\ph:\ov E\to\ov E$ stands for the $\CC$-linear endomorphism obtained by reducing $\ph$ modulo $\g m$, we have $\mm{Sp}_{\om\ph}=\mm{Sp}_{\ov\ph}$.\qed
\end{enumerate} 
\end{lem}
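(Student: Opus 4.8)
The plan is to prove both assertions by exploiting the decomposition $E = \CC \oplus \g m$ of $\La$ together with the finiteness of $\La$ as a complex vector space. Let me record the setup: $E$ is a finite $\La$-module, hence a finite-dimensional complex vector space (since $\dim_\CC \La < \infty$), and $\om\ph$ denotes $\ph$ viewed merely as a $\CC$-linear map on this vector space.

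\begin{proof}
\emph{Proof of (1).} Fix $\vr\in\mm{Sp}_{\om\ph}$ and recall that the generalized eigenspace is
\[
\bb G(\om\ph,\vr)=\{e\in E\,:\,(\om\ph-\vr\,\id)^N(e)=0\text{ for some }N\}.
\]
To see this is a $\La$-submodule, I take $\la\in\La$ and $e\in\bb G(\om\ph,\vr)$ and must check $\la e\in\bb G(\om\ph,\vr)$. The crucial point is that $\ph$ is $\La$-linear, so multiplication by $\la$ (as an endomorphism of $E$) commutes with $\om\ph$. Hence $(\om\ph-\vr\,\id)^N(\la e)=\la\po(\om\ph-\vr\,\id)^N(e)=0$, giving the claim. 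So (1) reduces to the elementary observation that the generalized eigenspaces of an endomorphism are stable under any endomorphism commuting with it.

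\emph{Proof of (2).} Here I would use the filtration of $E$ by powers of the maximal ideal. Since $\La$ is local with maximal ideal $\g m$ and finite-dimensional over $\CC$, the ideal $\g m$ is nilpotent, say $\g m^{k+1}=0$, and the reduction $\ov E=E/\g m E$ with $\ov\ph$ the induced map. Consider the finite filtration
\[
E\supset \g m E\supset \g m^2 E\supset\cdots\supset\g m^{k+1}E=0.
\]
Because $\ph$ is $\La$-linear, each step $\g m^iE$ is $\om\ph$-stable, so $\om\ph$ is compatible with this filtration and its spectrum is the union of the spectra of the induced maps on the successive quotients $\g m^iE/\g m^{i+1}E$. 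Each such quotient is a module over $\La/\g m=\CC$, i.e.\ a complex vector space, and the induced endomorphism there is obtained by tensoring $\ov\ph$ against the $\CC$-vector space $\g m^i/\g m^{i+1}$; concretely each quotient is a direct sum of copies of $\ov E$ with $\ov\ph$ acting diagonally, so its spectrum equals $\mm{Sp}_{\ov\ph}$. Taking the union over $i$ yields $\mm{Sp}_{\om\ph}=\mm{Sp}_{\ov\ph}$.

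The one point deserving care---and the main (if modest) obstacle---is justifying that the graded piece $\g m^iE/\g m^{i+1}E$ carries, as its induced endomorphism, exactly $\ov\ph$ up to multiplicity, so that no new eigenvalue appears and none is lost. This follows since $\g m^iE/\g m^{i+1}E\cong (\g m^i/\g m^{i+1})\ot_\CC\ov E$ as vector spaces and the map induced by $\ph$ is $\id\ot\ov\ph$ by $\La$-linearity; the inclusion $\mm{Sp}_{\ov\ph}\subset\mm{Sp}_{\om\ph}$ is immediate from the surjection $E\to\ov E$, while the reverse inclusion uses that a generalized eigenvector for $\om\ph$ with eigenvalue $\vr\notin\mm{Sp}_{\ov\ph}$ would, descending through the filtration, force $(\om\ph-\vr)$ to be invertible on every graded piece and hence on $E$.
\end{proof}
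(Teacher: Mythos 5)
The paper offers no proof of this lemma (it is stated with a \qed and left to the reader), so there is nothing on the paper's side to compare against; your argument is essentially the right one, and part (1) is correct exactly as you wrote it: multiplication by $\la$ commutes with $\om\ph$ by $\La$-linearity, hence preserves generalized eigenspaces.

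In part (2), however, one intermediate claim is false as stated: the natural map $(\g m^i/\g m^{i+1})\ot_\CC\ov E\to\g m^iE/\g m^{i+1}E$ is surjective but not in general an isomorphism, because $E$ need not be free over $\La$. For instance, with $\La=\CC[t]/(t^2)$ and $E=\La/\g m=\CC$, the graded piece $\g mE/\g m^2E$ vanishes while $(\g m/\g m^2)\ot_\CC\ov E\cong\CC$. Consequently the induced endomorphism on $\g m^iE/\g m^{i+1}E$ is a quotient of $\id\ot\ov\ph$, and its spectrum is only \emph{contained in} $\mm{Sp}_{\ov\ph}$, not equal to it. Fortunately this containment is all your argument needs: the equivariant surjection still gives $\mm{Sp}\bigl(\mm{gr}_i\bigr)\subset\mm{Sp}_{\ov\ph}$ for every $i$, whence $\mm{Sp}_{\om\ph}=\bigcup_i\mm{Sp}(\mm{gr}_i)\subset\mm{Sp}_{\ov\ph}$, while the reverse inclusion is supplied by the $i=0$ graded piece, which is literally $(\ov E,\ov\ph)$ (nonzero whenever $E\neq0$ by Nakayama). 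So replace ``isomorphism'' by ``equivariant surjection'' and ``equals'' by ``is contained in,'' and the proof is complete and correct.
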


Motivated by this result, in what follows, given  an endomorphism of a finite $\La$-module $\ph:E\to E$, we put  
\[\begin{split}
\mm{Sp}_{\ph}&=\begin{array}{c}\text{ the spectrum of $\ph$ regarded}\\\text{ as a $\CC$-linear endomorphism} \end{array}
\\
&=\text{ the spectrum of $\ov\ph:E/\g mE\to E/\g mE$.}  
\end{split}
\]
 
Let $T$ be a locally connected (like all complex spaces \cite[9.3, 177ff]{grauert-remmert84}) and connected topological space and  $\mathds E$ a coherent $\La_T$-module.  (Recall that $\La_T$ is the simple sheaf associated to $\La$: it is in addition a coherent sheaf of rings.) 
Since $\mathds E$ is of finite presentation, each point of $T$ possesses a connected neighbourhood $U$ such that, writing $E=\mathds E(U)$, we have  $\mathds E|_U=E_U$; this is to say that $\mathds E$ is locally constant. In particular, for each connected $U\subset T$ and each $p\in T$, the natural morphism $\mathds E(U)\to\mathds E_p$ is {\it injective}.

Let $\ph:  \mathds E\to \mathds E$ be a $\La_T$--linear endomorphism. It is easily proved that
for any two $p,q\in T$, we have $\mm{Sp}_{\ph_p}=\mm{Sp}_{\ph_q}$; we therefore make the following: 

\begin{dfn}\label{29.10.2019--2} The   {\it spectrum of} $\ph:\mathds E\to\mathds E$ is the spectrum of the  $\La$-linear endomorphism induced by $\ph$ on   an unspecified stalk. It shall be denoted by $\mm{Sp}_\ph$.
\end{dfn} 

Given $\vr\in\mm{Sp}_\ph$, let us write $\bb G(\ph,\vr)$ to denote the pre-sheaf of $\La_T$-modules
\[
U\longmapsto \bigcup_{\nu=1}^\infty \{e\in \mathds E(U)\,:\,(\ph-\vr)^\nu(e)=0\}.
\]
Using that $\mathds E$ is locally  constant, Jordan's decomposition allows us to  say: 

\begin{lem}\label{06.11.2019--1}For each $\vr\in\mm{Sp}_\ph$, there exists $\mu\in\NN$ such that $\bb G(\ph,\vr)=\mm{Ker}(\ph-\vr)^{\mu}$. In particular,   $\bb G(\ph,\vr)$ is a coherent sheaf of $\La_T$--modules  and 
\[
\bigoplus_{\vr\in\mm {Sp}_\ph} \bb G(\ph,\vr) = \mathds E.
\]\qed
\end{lem}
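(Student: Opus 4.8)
The plan is to work locally and invoke the structural assumptions already in place. Since $\mathds{E}$ is a coherent $\La_T$-module on a connected, locally connected space $T$, we established just above the statement that $\mathds{E}$ is locally constant: each point has a connected neighbourhood $U$ on which $\mathds{E}|_U = E_U$ with $E = \mathds{E}(U)$ a finite $\La$-module, and the $\La_T$-linear endomorphism $\ph$ restricts to a genuine $\La$-linear endomorphism $\ph_U : E \to E$. The definition of $\mm{Sp}_\ph$ (Definition \ref{29.10.2019--2}) guarantees that the spectrum computed on any stalk, hence on any such $E$, is the same set. So the problem reduces to a purely algebraic statement about a single finite $\La$-module $E$ and an endomorphism $\ph_U$, transported back to $T$ via the local-constancy identification.

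The first step is to settle the algebraic local picture. Fix $\vr \in \mm{Sp}_{\ph_U}$ and consider the generalized eigenspace filtration. Because $E$ is a finite $\La$-module and $\om\ph_U$ is a $\CC$-linear endomorphism of a \emph{finite-dimensional} $\CC$-vector space (here I use that $\La$ is finite over $\CC$, so $E$ is finite-dimensional over $\CC$), the ascending chain $\mm{Ker}(\ph_U-\vr) \subset \mm{Ker}(\ph_U-\vr)^2 \subset \cdots$ stabilizes; let $\mu$ be the smallest integer with $\mm{Ker}(\ph_U-\vr)^\mu = \mm{Ker}(\ph_U-\vr)^{\mu+1}$, so that $\bigcup_\nu \mm{Ker}(\ph_U-\vr)^\nu = \mm{Ker}(\ph_U-\vr)^\mu$. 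Lemma \ref{10.10.2019--1}(1) tells us each such generalized eigenspace is in fact a $\La$-submodule, and the primary decomposition (Jordan decomposition, as signalled in the statement) for $\om\ph_U$ over $\CC$ gives the internal direct sum $\bigoplus_{\vr} \bb G(\om\ph_U,\vr) = E$; by Lemma \ref{10.10.2019--1}(2) the index set $\mm{Sp}_{\om\ph_U}$ equals $\mm{Sp}_{\ph_U}$, so this is the decomposition we want.

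The second step is to globalize. The integer $\mu$ can be chosen uniformly: running the local argument at a single point already fixes $\mu$, and on any connected neighbourhood where $\mathds{E}$ is constant the stalkwise kernels $\mm{Ker}(\ph-\vr)^\nu$ agree with the kernels computed on sections, so the presheaf $U \mapsto \bigcup_\nu \{e : (\ph-\vr)^\nu e = 0\}$ coincides with $U \mapsto \mm{Ker}\big((\ph-\vr)^\mu : \mathds{E}(U) \to \mathds{E}(U)\big)$. Since $\mathds{E}$ is locally constant and finitely presented over the coherent sheaf of rings $\La_T$, the kernel of the $\La_T$-linear map $(\ph-\vr)^\mu$ is a coherent subsheaf, hence $\bb G(\ph,\vr)$ is a coherent sheaf of $\La_T$-modules rather than merely a presheaf. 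Finally, the direct-sum decomposition holds stalkwise by the local algebraic result, and because both sides are locally constant sheaves agreeing on stalks over a connected base, the canonical map $\bigoplus_{\vr \in \mm{Sp}_\ph} \bb G(\ph,\vr) \to \mathds{E}$ is an isomorphism of $\La_T$-modules.

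The main obstacle is essentially bookkeeping rather than a genuine difficulty: one must verify that the \emph{same} exponent $\mu$ works across all of $T$ and that the stalkwise kernels glue into the sheaf-theoretic kernel. Both are guaranteed by local constancy on the connected space $T$ (so that transition between overlapping constant patches is by isomorphisms preserving the decomposition), which is precisely why the hypotheses were arranged this way. I would therefore spend most of the care confirming that the presheaf $U \mapsto \bb G(\ph,\vr)(U)$ is already a sheaf equal to $\mm{Ker}(\ph-\vr)^\mu$ and not just its sheafification, which follows from the injectivity of $\mathds{E}(U) \to \mathds{E}_p$ noted above together with the constancy of $\mu$.
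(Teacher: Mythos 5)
Your proposal is correct and follows exactly the route the paper intends: the lemma is stated with its proof omitted precisely because, as the preceding sentence says, local constancy of $\mathds E$ plus Jordan decomposition (applied to the finite-dimensional $\CC$-vector space underlying the finite $\La$-module on a constant patch, with Lemma \ref{10.10.2019--1} supplying the $\La$-linearity of the generalized eigenspaces) reduces everything to linear algebra, and connectedness of $T$ propagates the single exponent $\mu$ and the stalkwise decomposition globally. Your care about the presheaf already being a sheaf, via injectivity of $\mathds E(U)\to\mathds E_p$, is the right point to check and is handled correctly.
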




\subsection{The exponents}\label{exponents}

Let us now suppose that $S$ is the   analytic spectrum of $\La$ and that the relative divisor $Y\subset X$  satisfies Hypothesis \ref{14.10.2019--3}, whose notations are now in force. Given $(\ce,\na)\in\g C_Y^{\log}(X/S)$, we set out to elaborate on the properties of the residue morphism  (Definition \ref{19.12.2019--2}). 
 
Let $c\in C$ and   $p\in Y_c^\dagger$. We pick a relative system of   coordinates $(U,\bm x)$ adapted to $Y$ at $p$ such that $\ci|_U=x_1\co_U$, see Lemma \ref{28.10.2019--1}. (In particular $Y_c^\dagger\cap U=Y\cap U$.) Then,  
\[
\mm{Res}_c^{\bm x}: \ce|_{U\cap Y_c}\aro\ce|_{U\cap Y_c}
\]
induces an endomorphism 
\[  {\rm HRes}_{c}^{\bm x} : 
\mm{Ker}(\na^{\bm x}_c)\aro\mm{Ker}(\na_c^{\bm x}).
\]
Since $ \mm{Ker}(\na^{\bm x}_c)$ is a local system on $U\cap Y_c^\dagger$ (Theorem \ref{10.06.2016--2})  and  $|U\cap Y_c^\dagger|$ is homeomorphic to a polydisk in $\CC^{n-1}$, we are able to introduce the spectrum of $\mm{HRes}_c^{\bm x}$
as in Definition \ref{29.10.2019--2}. 
Let us deepen our analysis.    By axiom, for any given $q\in Y\cap U$, the set  $\mm{Sp}({\mm{HRes}_c^{\bm x}})$ is just the set of eigenvalues for the $\La$-linear endomorphism
\[
\xymatrix{  \mm{Ker}(\na_c^{\bm x})_q\ar[rr]^-{\mm{HRes}_c^{\bm x}} &&  \mm{Ker}(\na_c^{\bm x})_q}. 
\]  
Now, by Theorem \ref{10.06.2016--2},   the natural arrow  
\[
\mm{Ker}(\na_c^{\bm x})_q\otu\La \co_{X,q}\aro \ce_q
\]
is an isomorphism.  Since  $\g M_{q}\cap\La=\g m$, using a standard isomorphism \cite[II.3.6, Corollary 3]{bourbakiA},
we arrive at a commutative diagram  
\[\xymatrix{
\mm{Ker}(\na_c^{\bm x})_q/\g m\ar[rr]^{\sim}\ar[d]_{ \mm{HRes}_c^{\bm x}  } && \ce_q/\g M_q\ar[d]^{\mm{Res}_c^{\bm x}(q)} 
\\
\mm{Ker}(\na_c^{\bm x})_q/\g m\ar[rr]^{\sim} && \ce_q/\g M_q .
}
\]
 Lemma \ref{10.10.2019--1}-(2) then tells us that $\mm{Sp}({\mm{HRes}_c^{\bm x}})$ is simply the spectrum of the $\CC$-linear endomorphism (independent of $(U,\bm x)$)
\[
\mm{Res}_c (q) : \ce(q)\aro\ce(q). 
\]

\begin{lem}\label{19.12.2019--4} The topological space $|Y^\dagger_c|$ is connected. 
\end{lem}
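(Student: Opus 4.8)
The plan is to reduce the statement to the classical fact that the complement of a thin analytic subset of a connected complex manifold remains connected. First I would record the topological simplification coming from the base. Since $\La$ is Artinian local with residue field $\CC$, the analytic spectrum $S$ consists of a single point; consequently the underlying topological space of any $S$-space coincides with that of its special fibre over the reduced point $\spc\CC$. Because $Y_c\to S$ is smooth, it is locally isomorphic to $D'\ti S$ with $D'$ a polydisk, so $|Y_c|$ is locally homeomorphic to an open subset of some $\CC^d$ and is therefore a (topological) complex manifold; it is connected, connectedness being part of Hypothesis \ref{14.10.2019--3}. Writing $Z:=\bigcup_{c'\not=c}(Y_c\cap Y_{c'})$, which is closed because $C$ is finite, we have $|Y_c^\dagger|=|Y_c|\smallsetminus|Z|$, and the whole task becomes showing that this complement is connected.

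Next I would pin down the local shape of $Z$ inside $Y_c$. Let $p\in Y_c\cap Y_{c'}$ and choose, by Lemma \ref{28.10.2019--1}, a relative system of coordinates $(U,\bm x)$ adapted to $Y$ at $p$ together with the bijection $\si$ identifying the branches; then $Y_c\cap U=\{x_{\si(c)}=0\}$ and $Y_{c'}\cap U=\{x_{\si(c')}=0\}$ with $\si(c)\not=\si(c')$. Hence, inside the manifold $Y_c\cap U$, the subset $Z\cap U$ is the finite union of the coordinate hypersurfaces $\{x_{\si(c')}=0\}$ for $c'\in C_p\smallsetminus\{c\}$, each of complex codimension $1$. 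In particular $|Z|$ is a closed complex-analytic subset of the connected complex manifold $|Y_c|$, of codimension $\ge1$ everywhere (equivalently, real codimension $\ge2$); in no coordinate chart does it fill up $Y_c$, so it is a proper, nowhere dense subset.

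Finally I would invoke connectedness of the complement. The local ingredient is that for small enough $U$ the set $(U\cap Y_c)\smallsetminus Z$ is connected, which holds because a thin analytic subset of a ball has connected complement (its real codimension is $\ge2$); this may be quoted from \cite{grauert-remmert84}. To pass to the global statement I would argue that each connected component $W$ of $|Y_c^\dagger|$ has open closure in $|Y_c|$: if $a\in\ov W$ and $U$ is a small adapted neighbourhood of $a$, then the connected set $(U\cap Y_c)\smallsetminus Z$ is nonempty, is contained in $Y_c^\dagger$, and meets $W$, hence lies entirely in $W$; since $Z$ is nowhere dense this set is dense in $U\cap Y_c$, whence $U\cap Y_c\subset\ov W$. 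Thus $\ov W$ is open and closed in the connected space $|Y_c|$, so $\ov W=|Y_c|$; as this forces any two components to coincide, $|Y_c^\dagger|$ is connected. The main obstacle is the justification of the local connectedness of $U\smallsetminus Z$—that removing a positive-codimension analytic set does not disconnect a ball—together with the care needed to handle the non-reduced structure of $\co_S$, both of which are dealt with by reducing, as above, to the underlying reduced complex manifold where the classical theory applies.
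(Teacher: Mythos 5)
Your proposal is correct and follows essentially the same route as the paper: pass to the underlying reduced connected complex manifold $|Y_c|$, observe that $|Y_c^\dagger|$ is the complement of a thin (positive-codimension) analytic subset, and conclude connectedness. The only difference is that the paper simply cites \cite[7.1.3, p.133]{grauert-remmert84} for the fact that removing a thin set does not disconnect, whereas you reprove it via the open-closure argument on components; both are fine.
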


\begin{proof}
By assumption, $|Y_c|$ is connected. In addition, $|Y_c|$ is the topological space of a complex manifold, call it $Z_c$.  Now, $|Y_c^\dagger|$ is the complement of a thin subset of $|Z_c|$, and hence is connected \cite[7.1.3, p.133]{grauert-remmert84}. 
\end{proof}

Using the fact that $|Y_c^\dagger|$ is a {\it connected} topological space, the following definitions carry no ambiguity. 
\begin{dfn}[The exponents]\label{29.10.2019--3}The spectrum of $\mm{Res}_c(p): \ce(p)\to\ce(p)$ for any point $p\in Y_c^\dagger$ is called the set of exponents of $\na$ along $Y_c$. It shall be denoted by $\mm{Exp}_{Y_c}(\na)$ or $\mm{Exp}_{Y_c}(\ce)$ if no confusion is likely. 
In like manner, $\mm{Exp}_Y(\na)$, or $\mm{Exp}_Y(\ce)$, is the union $\bigcup_c\mm{Exp}_{Y_c}(\na)$.
\end{dfn}

\begin{ex}Let us suppose that $\La=\CC[t]/(t^2)$, that  $X=\mm{Specan}\,\La[x]$     and that $Y$ is defined by $x\co_X$.  Let $\cl=\co_Xe$ be free and put $ \na e= e\ot tx^{-1}\mm dx$. Then $C$ is a singleton  and $\mm{Exp}_Y(\na)=\{0\}$.
\end{ex}

\section{Extending morphisms and connections}\label{04.11.2019--4}

 In this section we set out to prove:

\begin{thm}\label{11.05.2016--2}Let $S$ be the analytic spectrum of $\La$, $f:X\to S$ a smooth morphism of complex  spaces, and $Y\subset X$ a relative divisor with normal crossings satisfying Hypothesis \ref{14.10.2019--3}; write as usual $X^*=X\smallsetminus Y$. 
\begin{enumerate}[(1)]
\item The restriction functor 
\[
\g C^{\log}_Y(X/S)\aro \g C(X^*/S)
\]
is essentially surjective. 
\item The restriction functor 
\[
\g {MC}_Y^{\log}(X/S)\aro \g C(X^*/S)
\]
is an equivalence.
\end{enumerate} 
\end{thm}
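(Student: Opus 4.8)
The plan is to deduce part (2) formally from part (1) together with the full faithfulness already secured in Theorem~\ref{21.10.2019--1}, and to reserve the substance of the argument for part (1), which is exactly the existence of \emph{Deligne--Manin extensions}. For the reduction, observe that the restriction functor of (1) factors through $\g{MC}^{\log}_Y(X/S)$: it equals the base-change functor $\g C^{\log}_Y(X/S)\to\g{MC}^{\log}_Y(X/S)$, $(E,\na)\mapsto(E(*Y),\na)$, followed by the restriction functor of (2), since $(E(*Y))|_{X^*}=E|_{X^*}$ because $\co_X(*Y)|_{X^*}=\co_{X^*}$. Hence, granting that the composite is essentially surjective (part (1)), the functor $\g{MC}^{\log}_Y(X/S)\to\g C(X^*/S)$ is essentially surjective as well; being fully faithful by Theorem~\ref{21.10.2019--1}, it is then an equivalence, which settles (2).

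So everything rests on (1). Given $(E,\na)\in\g C(X^*/S)$, I would build a logarithmic model $(\wt E,\wt\na)\in\g C^{\log}_Y(X/S)$ by a local construction assembled through a uniqueness property, which is precisely the mechanism of Deligne's canonical ($\tau$-)extension. First, by Theorem~\ref{10.06.2016--2} the datum $(E,\na)$ is the same as a relative local system on $X^*$, so there is no irregular behaviour to fear and the regular-singular theory applies. Working near a point of $Y$, I would use the local product structure recalled at the beginning of Section~\ref{29.01.2020--2} together with Hypothesis~\ref{14.10.2019--3} to reduce to $X=D\ti S$ and $Y=H\ti S$, with $D$ a polydisk and $H=\{x_1\cdots x_m=0\}$. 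Via the equivalence $\om:\g C(X^*/S)\arou\sim\g C(D^*)_{(\La)}$ of Section~\ref{03.05.2016--1}, the connection becomes a $\CC$-connection on $D^*$ carrying a commuting action of $\La$ by horizontal endomorphisms.

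The key step is Deligne's canonical extension (\cite[II.5]{deligne70}, \cite[Theorem~5.1]{malgrange}): the underlying complex connection on $D^*$ admits a unique logarithmic extension to $D$ whose residue eigenvalues lie in the fixed set $\tau$. Because this extension is functorial in the connection, the commuting action of $\La$, being realised by horizontal endomorphisms, extends to the canonical model and yields an algebra homomorphism from $\La$ into the horizontal $\co_D$-linear endomorphisms of the extension; that is, we obtain an object of $\g C^{\log}_H(D)_{(\La)}$, hence, by the logarithmic counterpart of the equivalences of Section~\ref{03.05.2016--1}, a local object of $\g C^{\log}_{Y\cap U}(U/S)$ restricting to $(E,\na)$ over $U\cap X^*$. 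The uniqueness of the $\tau$-normalised extension forces the canonical comparison isomorphisms between two such local models on overlaps to satisfy the cocycle condition, so the local pieces glue into a global $(\wt E,\wt\na)\in\g C^{\log}_Y(X/S)$ restricting to $(E,\na)$.

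The main obstacle I anticipate is the verification that Deligne's canonical extension is genuinely $\La$-equivariant: one must check that extending each $\la\in\La$ through the functoriality of the $\tau$-extension produces a $\CC$-algebra homomorphism $\La\to\mm{End}$ of the extended bundle whose image consists of horizontal, $\co_D$-linear endomorphisms compatible with the original $\La$-structure on $D^*$. A second delicate point is the rigidity needed for the gluing: it is exactly the normalisation by the fundamental domain $\tau$ that removes the ambiguity of twisting by $\co_X(kY)$ and makes the local extensions canonical, hence gluable. Both of these constitute the Deligne--Manin extension theorem, after which, as explained above, the theorem follows at once.
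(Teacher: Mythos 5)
Your proposal is correct and follows the paper's own strategy: part (2) is reduced to part (1) together with the full faithfulness of Theorem \ref{21.10.2019--1}, and part (1) is obtained from local existence of $\tau$-normalised logarithmic extensions assembled by their uniqueness (the paper's Theorem \ref{02.10.2019--2} feeding into Theorem \ref{deligne_manin_extension}). The only cosmetic difference is that the paper builds the $\La$-structure on the local extension directly, by choosing commuting $\La$-linear logarithms of the monodromy operators with spectrum in $\tau$ (Proposition \ref{03.09.2019--1}), rather than transporting the $\La$-action onto the classical canonical extension via functoriality and uniqueness of extended endomorphisms, as you do; both mechanisms rest on the same rigidity of the $\tau$-normalisation.
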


Note that, once statement (1) is shown to be true, the work required for proving (2) reduces to the verification of fully faithfulness of $\g {MC}^{\log}_Y(X/S)
\to\g C(X^*/S)$, which is Theorem \ref{21.10.2019--1}. 
Hence, we concentrate on finding preferred extensions for objects of $\g C(X^*/S)$ and aim at Theorem \ref{deligne_manin_extension}.

The idea employed here is the same as in \cite[Theorem 4.4]{malgrange}: one solves the problem locally (this shall be done by Proposition \ref{02.10.2019--2}-(1)) and one proves  then uniqueness of solutions (Proposition \ref{02.10.2019--2}-(2)). The work around local existence follows without much effort from \cite[pp. 159-60]{malgrange}, except that we need to replace the well--known argument concerning the surjectivity of the exponential (Lemma 4.5 in op.cit.) by a slightly longer one (Proposition  \ref{03.09.2019--1}). On the other hand, in extending morphisms, we have chosen to deviate slightly from the standard technique (see  \cite[pp. 161-2]{malgrange} or, for example, \cite[Theorem 4.1]{wasow65}) and put forward Proposition \ref{24.10.2019--2} which shall play a role in Section \ref{17.12.2019--1} as well.

\subsection{The dissonance}
As in the theory of regular-singular connections one finds consistently the need to exclude the case where exponents differ by integers, the following definition shall be useful. 
\begin{dfn}\label{dissonance}
\begin{enumerate}[(1)]\item
Let $A=\{A_j\}_{j\in J}$ and $B=\{B_j\}_{j\in J}$ be   two  families of subsets of $\CC$ indexed by a common   finite set $J$. Define the {\it dissonance} from  $B$ to $A$, call it $\de(B,A)$, as being the maximum of 
\[   \bigcup_{j\in J}\NN\cap(B_j\ominus A_j) 
\]
in case this set is non-empty, and zero otherwise.
 \item Let  $X\to S$ be a smooth morphism,  where $S$ is the analytic spectrum of $\La$. Let $Y=\sum_{c\in C}Y_c$ be a relative divisor with normal crossings  satisfying the assumptions of Hypothesis \ref{14.10.2019--3}. We define, once given $(\ce,\na_\ce)$ and $(\cf,\na_\cf)$ objects of $\g C_Y^{\log}(X/S)$, the {\it dissonance from $\na_\cf$ to $\na_\ce$}, denoted  $\de(\na_\cf,\na_\ce)$, as  the  dissonance from the family $\{\mm{Exp}_{Y_c}(\na_\cf)\}_{c\in C}$ to $\{\mm{Exp}_{Y_c}(\na_\ce)\}_{c\in C}$. 
\end{enumerate}
\end{dfn}
 
\subsection{Extension of arrows: local case}\label{25.09.2019--2}
Let $D$ be an open polydisk in $\CC^n$ about the origin, $\{x_j\}_{j=1}^n$ be the coordinate functions and $H$, respectively $H_j$,  be the effective divisor in $D$ defined by the ideal  $x_1\cdots x_m\co_D$, respectively  $x_j\co_D$. 
Write 
\[\begin{split}
D^*&=D\smallsetminus H,\\  H^\dagger_j&=H_j\smallsetminus \bigcup_{k\not=j}^mH_k
\end{split}\]
and 
\[\begin{split}  H^{\rm cross}&=\bigcup_{i<j} H_i\cap H_j\\& = H\smallsetminus  \bigcup_jH_j^\dagger. \end{split}\]

\begin{prp}\label{24.10.2019--2}
Let $(\ce,\na_\ce)$  and $(\cf,\na_\cf)$   be objects of $\g C^{\log}_H(D)$, $\de=\de(\na_\cf,\na_\ce)$ the dissonance from $\na_\cf$ to $\na_\ce$ and    $\ph:\ce(*H)\to \cf(*H)$  an arrow in $\g {MC}_H^{\log}(D)$. If $\cf$ is a vector bundle, then   $\mm{Im}(\ph)\subset \cf(\de H)$.
More precisely, there exists an arrow 
\[\wt\ph:\ce\aro\cf(\de H)\]
in $\g C_H^{\log}(D)$
rendering the diagram 
\[
\xymatrix{\ce(*H)\ar[rr]^\ph && \cf(*H)
\\
\ce\ar[u]^{\rm can}\ar[rr]^{\wt\ph}&& \cf(\de H)\ar[u]_{\rm can}
}
\] 
commutative.  In addition, $\wt \ph$ is the unique arrow of $\mm{Hom}_{\co_{D}}(\ce,\cf(\de H))$ extending $\ph$. 
 
\end{prp}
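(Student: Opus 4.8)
The plan is to control the order of the poles of $\ph$ along each component $H_j$ separately, bound it by the dissonance $\de$ by means of the residue endomorphisms of Definition~\ref{19.12.2019--2}, and then spread the resulting containment across the crossing locus $H^{\rm cross}$ by a normality argument. Throughout I regard $\ph$ as an arrow $\ce\to\cf(*H)$ obtained by precomposing with the canonical map $\ce\to\ce(*H)$, so that the assertion $\mm{Im}(\ph)\subset\cf(\de H)$ becomes a statement about sections; since $\cf$ is a vector bundle and $\co_D(\de H)=(x_1\cdots x_m)^{-\de}\co_D$ is invertible, the subsheaf $\cf(\de H)\subset\cf(*H)$ is locally free, and the containment can be tested pointwise. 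The uniqueness of $\wt\ph$ in $\hh{\co_D}{\ce}{\cf(\de H)}$ will follow at the end from Corollary~\ref{16.06.2016--2}: any two extensions agree on the dense open $D^*$, and $\cf(\de H)$ is torsion-free.

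\textbf{Bounding poles on $H_j^\dagger$.} First I would fix $j$ and a point $p\in H_j^\dagger$, so that near $p$ one has $H=H_j=\{x_j=0\}$ and $\ph$ is an arrow of logarithmic connections in the single direction $x_j$. Let $N$ be minimal with $\ph(\ce_p)\subset\cf(NH)_p$; I claim $N\le\de$. This is clear if $N=0$, so suppose $N\ge1$ and set $\psi=x_j^N\ph:\ce\to\cf$ near $p$, a genuine $\co_D$-linear arrow whose reduction $\psi_0:=\psi|_{H_j}$ is a nonzero germ by minimality of $N$; by Nakayama applied to $\mm{Im}(\psi_0)$ the fibre $\psi_0(p):\ce(p)\to\cf(p)$ is then itself nonzero. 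Applying the logarithmic field $\vt_{x_j}$ to the horizontality identity for $\ph$ and using $\vt_{x_j}(x_j^{-N})=-Nx_j^{-N}$ yields, near $p$,
\[
(\na_\cf)_{\vt_{x_j}}\bigl(\psi(e)\bigr)-\psi\bigl((\na_\ce)_{\vt_{x_j}}(e)\bigr)=N\,\psi(e).
\]
Reducing modulo $x_j$ and evaluating at $p$ turns this into
\[
\mm{Res}_j^\cf(p)\circ\psi_0(p)-\psi_0(p)\circ\mm{Res}_j^\ce(p)=N\,\psi_0(p),
\]
an intertwining between the residue endomorphisms of Proposition~\ref{19.12.2019--3}. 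Decomposing by Jordan blocks shows $\psi_0(p)$ maps $\bb G(\mm{Res}_j^\ce(p),\al)$ into $\bb G(\mm{Res}_j^\cf(p),\al+N)$; as $\psi_0(p)\ne0$, some $\al\in\mm{Exp}_{H_j}(\ce)$ satisfies $\al+N\in\mm{Exp}_{H_j}(\cf)$, whence $N\in\NN\cap(\mm{Exp}_{H_j}(\cf)\ominus\mm{Exp}_{H_j}(\ce))$ and therefore $N\le\de$. This establishes $\mm{Im}(\ph)\subset\cf(\de H)$ over the open set $U=D\smallsetminus H^{\rm cross}$, where $H$ is a disjoint union of smooth divisors.

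\textbf{Crossing the codimension-two locus.} Since $H^{\rm cross}=\bigcup_{i<j}H_i\cap H_j$ has codimension $\ge2$ in the manifold $D$ and $\cf(\de H)$ is locally free, the second Riemann extension theorem gives $\cf(\de H)=\iota_*\bigl(\cf(\de H)|_U\bigr)$ for $\iota:U\hookrightarrow D$. For a local section $e$ of $\ce$ over an open $W$, the section $\ph(e)\in\Ga(W,\cf(*H))$ lands in $\cf(\de H)$ over $W\cap U$ by the previous step, hence extends over $W$ to a section of $\cf(\de H)$; because $\cf(*H)$ is torsion-free over $\co_D$ and $W\cap U$ is dense, this extension coincides with $\ph(e)$. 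Thus $\mm{Im}(\ph)\subset\cf(\de H)$ on all of $D$, and I let $\wt\ph:\ce\to\cf(\de H)$ be the corestriction of $\ph$. That $\wt\ph$ is an arrow of $\g C^{\log}_H(D)$ follows because $\cf(\de H)=\cf\ot_{\co_D}\co_D(\de H)$ lies in $\g C^{\log}_H(D)$ by Example~\ref{19.12.2019--5}, while horizontality of $\wt\ph$ is inherited from that of $\ph$ on the dense open $D^*$ together with torsion-freeness of the target.

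\textbf{Main obstacle.} The delicate point is the residue computation of the second paragraph: one must be sure the leading coefficient $\psi_0(p)$ is genuinely nonzero (the role of Nakayama) and that the intertwining relation forces the \emph{integer} pole order $N$ into the Minkowski difference governing $\de$. The passage across $H^{\rm cross}$ is comparatively soft, resting only on the local freeness of $\cf(\de H)$ and the torsion-freeness of $\cf(*H)$.
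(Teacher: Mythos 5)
Your overall strategy is sound and genuinely different from the paper's: you bound the minimal pole order $N$ at a point of $H_j^\dagger$ by a single intertwining relation $\mm{Res}_j^\cf\circ\psi_0-\psi_0\circ\mm{Res}_j^\ce=N\psi_0$ and a generalized-eigenspace shift, whereas the paper lifts generalized eigensections of the residue from $H_1$ via the Stein property and runs a descending induction on the pole order $k$, peeling off one power of $y_1$ at a time using the same shift identity (its Lemma \ref{23.10.2019--1}). Your route is more economical in that it treats all sections at once through the minimality of $N$, and both proofs converge on the identical codimension-two extension step and the identity-theorem uniqueness argument.

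There is, however, one genuine flaw: the Nakayama step. From $\psi_0\neq0$ as a germ at $p$, Nakayama applied to $\mm{Im}(\psi_0)$ gives $\mm{Im}(\psi_0)_p/\g M_p\,\mm{Im}(\psi_0)_p\neq0$, i.e.\ the fibre \emph{of the image sheaf} is nonzero. This is not the same as the nonvanishing of the image of the fibre map $\psi_0(p):\ce(p)\to\cf(p)$, which is the quotient $\mm{Im}(\psi_0)_p/\bigl(\mm{Im}(\psi_0)_p\cap\g M_p\cf_p\bigr)$ and can perfectly well vanish: take $\psi_0$ to be multiplication by a coordinate on $H_j$ vanishing at $p$, so that $\mm{Im}(\psi_0)_p\subset\g M_p\cf_p$ while $\psi_0\neq0$. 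So as written your key conclusion $\psi_0(p)\neq0$ does not follow. The repair is short but must be made: since $\cf|_{H_j}$ is a vector bundle over the \emph{reduced} space $H_j$, a nonzero $\psi_0$ must have $\psi_0(q)\neq0$ for some $q\in H_j^\dagger$ arbitrarily close to $p$ (a section of a vector bundle over a reduced complex space vanishing at every point is zero --- the paper uses exactly this fact); the intertwining relation holds at the level of sheaves near $p$ and may be evaluated at $q$ instead, and because the exponents $\mm{Exp}_{H_j}(\ce)$, $\mm{Exp}_{H_j}(\cf)$ are independent of the chosen point of $H_j^\dagger$ (Definition \ref{29.10.2019--3}), the conclusion $N\in\NN\cap\bigl(\mm{Exp}_{H_j}(\cf)\ominus\mm{Exp}_{H_j}(\ce)\bigr)$, hence $N\le\de$, survives. (You should also note that $N\le0$ is trivially fine, not just $N=0$.) With that correction the argument is complete.
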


\begin{proof}We are only required to show that $\ph$ extends to an arrow $\wt\ph:\ce\to\cf(\de H)$ between $\co_D$-modules; indeed, since for each $U\subset D$ the natural arrow $\Ga(U,\cf (\de H))\to\Ga( U,\cf( * H))$ is  injective, the fact that $\ph$ is horizontal immediately assures that $\wt\ph$ is likewise. 

The heart of the proof is the 
 \emph{Claim:} For any $j\in\{1,\ldots,m\}$, any $p\in  H^\dagger_j$,  any open polydisk $V\subset D\smallsetminus H^{\rm cross}$ about $p$, and any $e\in \Ga(V,\ce)$, the section $\ph(e)\in\Ga(V,\cf(*H))$ is actually an element of  $\Ga(V,\cf(\de H))$.

To ease notations, we prove the claim in case $j=1$. 
Let $p\in H_1^\dagger$ and fix $V$ an  open polydisk about  $p$ and contained in $D\smallsetminus H^{\rm cross}$ (in particular, $V\cap H_1=V\cap H_1^\dagger$). Let $U\subset V$ be a polydisk about $p$ where we can find a  relative system of coordinates adapted to $H$, call it $\bm y$, and  such that $y_1\co_U$ is the ideal of $H_1\cap U$.

In what follows, let us write $\vt$ in place of   $\vt_{y_1}$, \[\vt_\ce:\ce|_U\aro\ce|_U\quad\text{and}\quad\vt_\cf:\cf|_U\aro\cf|_U\] for the $\CC$--linear endomorphisms  defined by $\vt$ by means of the connection. 
We will require the following  simple result: 

\begin{lem}\label{23.10.2019--1} For any $k\in\NN$,  $\mu\in\NN$ and $\vr\in\CC$, the following formula holds:  
\[
[\vt-(\vr+k)]^\mu y_1^k=y_1^k(\vt-\vr)^\mu.  
\]\qed
\end{lem}

Adopting the notations of Section \ref{exponents} and employing Lemma \ref{06.11.2019--1}, we can write  
\[
\mm{Ker}(\na^{\bm y}_{\ce,1})  = \bigoplus_{\vr} \mm{Ker}\left(\mm{Res}_1(\ce) -\vr\right)^{\mu_\vr}.
\]
(In Section \ref{exponents} we have actually employed the notation $\mm{HRes}$ for the restriction of $\mm{Res}$ to the sheaf $\mm{Ker}$; this chance should cause no confusion.)
Let $\vr\in\mm{Exp}_{H_1}(\ce)$ and choose  $\ov e\in \Ga(U\cap H_1 ,\ce|_{H_1})$  such that 
\[
\ov e \in\mm{Ker} \left(\mm{Res}_1(\ce)-\vr\right)^{\mu_\vr}.
\]  
Because $U$ is a polydisk (and hence a Stein space), there exists      $e\in\Ga(U,\ce)$ such that      $e|_{H_1}=\ov e$ \cite[1.4.6, p. 35]{grauert-remmert84}. Consequently,
\[
(\vt_\ce-\vr)^{\mu_\vr}(e)\in\Ga(U,y_1\ce)=y_1\Ga(U,\ce).
\]
Let $k>\de$ be   such that $y^k_1\ph(\ce)\subset \cf$.  
By Lemma \ref{23.10.2019--1}, we have 
\[
\begin{split}
[\vt_\cf-(\vr+k)]^{\mu_\vr}(y_1^k\ph(e))&  = y_1^k(\vt_\cf-\vr)^{\mu_\vr}(\ph(e)) 
\\
&=y_1^k\ph[(\vt_\ce-\vr)^{\mu_\vr}(e)]
\\&\in y_1^{k+1}\ph(\ce).
\end{split}
\]
Hence,   
\[
[\vt_\cf-(\vr+k)]^{\mu_\vr}(y_1^k\ph(e))\in\Ga(U,y_1\cf).
\]
This implies that 
\[
\left.\left\{y_1^k\ph(e)\right\}\right|_{H_1} \in \mm{Ker}\,(\mm{Res}_1(\cf)-(\vr+k))^{\mu_\vr}.
\]
Since $\vr+k$ cannot be an exponent of $\cf$, since $k>\de$ is supposed true, $\left.\left\{y_1^k\ph(e)\right\}\right|_{H_1}$ has to vanish altogether which means that 
\[
y_1^k\ph(e)\in  \Ga(U,y_1\cf).
\] (Here we used the fact that the only   section $s$   of a vector bundle   $\cs$ over a {\it reduced} complex space $Z$ such that $s(z)=0$ for all $z\in Z$ is the zero section.)
Hence, \[ y_1^{k-1}\ph(e)\in\Ga(U,\cf)\] since $\cf$ has no $y_1$-torsion. 

Let now $e\in \Ga(U,\ce)$ be an arbitrary section and write 
\[\tag{$\#$}
e|_{H_1} = \sum_\vr \ov a_\vr\po \ov e_\vr,
\]
where $\ov a_\vr\in\co(U\cap H_1)$ and  $\ov e_\vr\in\mm{Ker}(\mm{Res}_1(\na)-\vr)^{\mu_\vr}$. This is possible because 
\[
\co_{U\cap H_1}\ot\mm{Ker}(\na_{\ce,1}^{\bm y})\arou\sim \ce|_{U\cap H_1}.
\]
By the Stein property, we may assume that $\ov e_\vr=e_\vr|_{H_1}$  and $\ov a_\vr= a_\vr|_{H_1}$ so that, using eq. ($\#$)  and multiplying by $y_1^{k-1}$, we have 
\[
y_1^{k-1}e=\sum_\vr a_\vr \po y_1^{k-1} e_\vr+y_1^k \ep,\quad\text{for some $\ep\in\Ga(U,\ce)$.}
\]
Hence, 
$\ph(y_1^{k-1}e)\in \Ga(U,\cf)$. 
By induction, we are then able to say that $\ph(y_1^{\de}e)\in \Ga(U,\cf)$. It is now a simple matter to show that the Claim holds in all generality.

Granted the claim, we can assure that for any $p\in H$, any open polydisk $V$ about $p$ in $D$ and any $e\in\Ga(V,\ce)$, the section $\ph(e)|_{V\smallsetminus H^{\rm cross}}$ belongs to $\Ga(V\smallsetminus H^{\rm cross},\cf(\de H))$. Using  Riemann's second extension theorem \cite[p.132, 7.1.2]{grauert-remmert84} we are   able to say that $\ph(e)\in\Ga(V,\cf(\de H))$. It is then a simple matter to see that $\mm{Im}(\ph)\subset\cf(\de H)$.

We end by arguing that the arrow constructed previously is unique. Let $\wh\ph:\ce\to\cf(\de H)$ extend $\ph$. It then follows that $\wh\ph-\wt\ph$ when restricted to $D^*$,  coincides with $0:\ce|_{D^*}\to\cf|_{D^*}$. But a section of $\cf(\de H)$ over an open subset $W\subset D$ which vanishes on $W\smallsetminus H$ must vanish on the whole of $W$ by the identity theorem \cite[I.4.10, p.22]{fritzsche-grauert02}, so that $\wh\ph=\wt\ph$ as maps from $\Ga(W,\ce)$ to $\Ga(W,\cf(\de H))$; this is enough argument.
\end{proof}

\subsection{Extension of arrows: global case} Due to the uniqueness of the extension, the global case is reduced to the local case, in which we can assume the settings of Section \ref{03.05.2016--1}. 


\begin{prp}\label{20.11.2019--1}Let $S$ be the  analytic spectrum of $\La$ and 
$f:X\to S$ a smooth morphism. Let $Y\subset X$ be a relative divisor with normal crossings satisfying   Hypothesis \ref{14.10.2019--3} and denote the complement of $Y$ by   $X^*$. Let $(\ce,\na_\ce)$ and $(\cf,\na_\cf)$ be objects of $\g C_Y^{\log}(X/S)$, and suppose that $\cf$ is a vector bundle relatively to $S$. Denote by $\de$ the dissonance   from $\na_\cf$ to $\na_\ce$. 
Then, any arrow $\ph:\ce|_{X^*} \to \cf|_{X^*}$ in $\g{C} (X^*/S)$ can be extended to an arrow $\wt\ph:\ce\to\cf(\de Y)$ of $\g C_Y^{\log}(X/S)$. Moreover, this extension is unique. 
\end{prp} 
\begin{proof}
Due to the uniqueness, the claim is local in $X$. Therefore, employing the notations from the beginning of Section \ref{25.09.2019--2},  we can assume: 
\begin{eqnarray*}
X&=D\ti S,
\\   
Y&=H\ti S &\text{and}
\\ 
Y_j&=H_j\ti S. 
\end{eqnarray*}
In addition, let us employ also the notations from Section  
\ref{03.05.2016--1}. In particular, $\om$ denotes the equivalence  $\g C_Y^{\log}(X/S)\stackrel\sim\to \g C^{\log}(D)_{(\La)}$ explained in the aforementioned section. 
The following result is tautological and will be used further ahead.  
\begin{lem}\label{30.05.2016--1}Let $(\ce,\na_\ce)\in {\g C}_Y^{\log}(X/S)$. Then, for each $j\in\{1,\ldots,m\}$, we have  $\mm{Exp}_{Y_j}(\na_\ce)=\mm{Exp}_{H_j}(\om\na_\ce)$. In particular, if $(\cf,\na_\cf)$ is another object of $ {\g C}_Y^{\log}(X/S)$, then we have an equality of dissonances $\de(\cf,\ce)=\de(\om\cf,\om\ce)$.
\qed
\end{lem}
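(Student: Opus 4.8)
The plan is to reduce the asserted equality of exponents to Lemma \ref{10.10.2019--1}-(2), after recognising that the two residue morphisms involved are literally the same sheaf endomorphism, read over two different base rings. Keeping the notation of Section \ref{03.05.2016--1}, I write $\cg=\mm{pr}_*\ce\in(\coh{\co_D})_{(\La)}$, a finite $\La$-module in $\coh{\co_D}$ equipped with the connection $\om\na_\ce$. Since the isomorphism $\mm{pr}^*\Om_D^1\cong\Om_{X/S}^1$ is compatible with contraction against the relative field $\vt_{x_j}=x_j\pd_{x_j}$, the construction of $\om$ in Section \ref{03.05.2016--1} shows that $(\na_\ce)_{\vt_{x_j}}:\ce\to\ce$ and $(\om\na_\ce)_{\vt_{x_j}}:\cg\to\cg$ agree under the identification $|X|=|D|$, $\ce=\cg$. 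Passing to the quotient by $x_j$ as in Proposition \ref{19.12.2019--3}, the residue of $\na_\ce$ along $Y_j$ and the residue of $\om\na_\ce$ along $H_j$ are therefore one and the same endomorphism of $\ce|_{Y_j}=\cg|_{H_j}$; and because $\na_\ce$ is $\co_S$-linear, this common residue commutes with the $\La$-action.

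Next I would match the fibres on which the two spectra are computed. As $|X|=|D|$, a point $p\in Y_j^\dagger$ is simultaneously a point of $H_j^\dagger$; writing $\co_{X,p}=\co_{D,p}\ot_\CC\La$, its maximal ideal $\g M_p$ is generated by the maximal ideal $\g n_p$ of $\co_{D,p}$ together with $\g m$, whence $\ce(p)=\cg(p)/\g m\,\cg(p)$, where $\cg(p)=\cg_p/\g n_p\cg_p$ is exactly the fibre serving to define $\mm{Exp}_{H_j}(\om\na_\ce)$. Let $\ph$ denote the $\La$-linear endomorphism induced by the common residue on the finite $\La$-module $\cg(p)$. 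By Definition \ref{29.10.2019--3} one then has $\mm{Exp}_{H_j}(\om\na_\ce)=\mm{Sp}_{\om\ph}$ and $\mm{Exp}_{Y_j}(\na_\ce)=\mm{Sp}_{\ov\ph}$, so Lemma \ref{10.10.2019--1}-(2), which asserts $\mm{Sp}_{\om\ph}=\mm{Sp}_{\ov\ph}$, delivers the equality $\mm{Exp}_{Y_j}(\na_\ce)=\mm{Exp}_{H_j}(\om\na_\ce)$.

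The statement about dissonances follows at once: by Definition \ref{dissonance} both $\de(\na_\cf,\na_\ce)$ and $\de(\om\na_\cf,\om\na_\ce)$ are determined solely by the families $\{\mm{Exp}_{Y_c}(-)\}_{c}$, respectively $\{\mm{Exp}_{H_c}(-)\}_{c}$, of exponent sets, and these coincide term by term by what precedes. There is no genuine obstacle in this argument---the entire content is the observation that passing from relative to absolute exponents is precisely the passage from $\mm{Sp}_{\ov\ph}$ to $\mm{Sp}_{\om\ph}$ controlled by Lemma \ref{10.10.2019--1}; the only point requiring a little care is the bookkeeping identifying $\ce(p)$ with $\cg(p)/\g m\,\cg(p)$.
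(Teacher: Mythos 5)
Your proposal is correct and is precisely the argument the paper has in mind: the paper omits the proof as ``tautological,'' and the intended justification is exactly your identification of the two residues as one endomorphism together with $\ce(p)=\cg(p)/\g m\,\cg(p)$ and Lemma \ref{10.10.2019--1}-(2). Nothing further is needed.
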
 

Continuing with the proof, we have  objects $\om\ce$ and $\om\cf$ from $[\g C_H^{\log}(D)]_{(\La)}$, and an arrow   
\[
\om\ph:\om\ce|_{D^*}\aro \om\cf|_{D^*}
\]
in $\g C(D^*)_{(\La)}$. From Lemma \ref{30.05.2016--1},   $\de(\om\cf,\om\ce)=\de$. 
We are in a position to apply Theorem \ref{04.11.2019--5}  and then, as $\cf$ is locally free, Proposition \ref{24.10.2019--2}
to obtain    
 an arrow from $\g C_H^{\log}(D)$,  
\[
\ov{\om\ph} : \om\ce\aro (\om\cf)(\de H) ,
\]
 extending $\om\ph$. 
The identity principle  \cite[I.4.10,p.22]{fritzsche-grauert02}    assures that the restriction morphism
\[\mm{Hom}_{\co_D}(\om\ce,(\om\cf)(\de H))\aro\hh{\co_{D^*}}{\om\ce|_{D^*}}{\om\cf|_{D^*}}\]
is injective and consequently $\ov{\om\ph}$ is also $\La$-linear.   Hence, $\ph$ extends to a horizontal morphism  $\wt\ph:\ce\to \cf(\de Y)$ because we can identify $\om(\cf(\de Y))$ with $(\om\cf)(\de H)$.

Injectivity of the restriction morphism $\hh{\co_X}{\ce}{\cf(\de Y)}\to\hh{\co_{X^*}}{\ce|_{X^*}}{\cf|_{X^*}}$ guarantees that $\wt\ph$ is unique. 
\end{proof}
\subsection{Extension of connections: local case} In this section, notations and conventions are  those described at the start of Section \ref{25.09.2019--2}: $D$ is an open polydisk, etc.

\begin{thm}[``Local Deligne-Manin extension'']\label{02.10.2019--2}Let $(E,\na )$ be an object of $\g C(D^*)_{(\La)}$.  
\begin{enumerate}[(1)]
\item   
There exists $\ce\in {\g C}_H^{\log}(D)_{(\La)}$ and an isomorphism in $\g C(D^*)_{(\La)}$,  
\[
\ph:  \ce|_{D^*}\arou\sim E. 
\]
In addition, $\ce$ can be chosen to enjoy the following properties. 
\begin{enumerate}[(i)]
\item As an $\co_D$-module with action of $\La$, $\ce$ is
a $\La$-vector bundle (see Definition \ref{24.06.2016--1}).   
\item For each $j\in\{1,\ldots,m\}$, the exponents of $\ce$ along $H_j$  are all on $\tau$.
\end{enumerate}
\item  Let  $\ce'\in {\g C}_H^{\log}(D)_{(\La)}$ enjoy properties (i) and (ii) of (1), and let 
\[
\ps: \ce' |_{D^*}\arou\sim E
\]
be an isomorphism. Then there exists an isomorphism 
\[
\xi:\ce\arou\sim  \ce'
\]
in $ {\g C}^{\log}_H(D)_{(\La)}$ rendering diagram 
\[
\xymatrix{\ce|_{D^*}\ar[r]^-{\ph}\ar[dr]_{\xi|_{D^*}} &E  \\ & \ce' |_{D^*}\ar[u]_\ps }
\]
commutative. 
Moreover, the isomorphism $\xi$ is the only one having this property. 
\end{enumerate}
\end{thm}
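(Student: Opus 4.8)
The plan is to reduce Part (1) to the construction of commuting logarithms of the monodromy, and to obtain Part (2) formally from the extension results of Subsection \ref{25.09.2019--2}.

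Start with existence. Realizing $D$ as a product of disks, $D^*$ becomes $(\Delta^*)^m\times\Delta^{n-m}$, so $|D^*|$ is connected with $\pi_1(|D^*|)\cong\ZZ^m$, freely generated by small loops $\ga_1,\ldots,\ga_m$ about $H_1,\ldots,H_m$. Through the equivalence $\om$ of Section \ref{03.05.2016--1} and Theorem \ref{10.06.2016--2}, the datum $(E,\na)$ is, up to isomorphism, the same as its relative local system $\mm{Ker}\,\na$, that is, a finite $\La$-module $V$ (a stalk) equipped with the pairwise commuting $\La$-linear monodromies $T_j=\rho(\ga_j)$. It thus suffices to build a logarithmic $\La$-connection on $D$ whose restriction reproduces this monodromy datum.

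The heart of the argument---and the step I expect to be the main obstacle---is to produce pairwise commuting $\La$-linear endomorphisms $\Ga_1,\ldots,\Ga_m$ of $V$ with $\exp(-2\pi i\,\Ga_j)=T_j$ and all spectra (in the sense of Section \ref{30.05.2016--2}) contained in $\tau$; this replaces, in the $\La$-linear setting, the surjectivity of the exponential used by Malgrange. I would decompose $V$ simultaneously along the generalized eigenspaces $\bb G(\om T_j,\mu)$ of the $\CC$-linear reductions $\om T_j$; these are $\La$-submodules by Lemma \ref{10.10.2019--1}(1), and, the $T_j$ commuting, they refine to a common decomposition $V=\bigoplus_{\un\mu}V_{\un\mu}$. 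On each $V_{\un\mu}$ the map $T_j-\mu_j$ is $\La$-linear and nilpotent, so $T_j=\mu_j(\id+\nu_j)$ with $\nu_j$ nilpotent; setting
\[
\Ga_j=\frac{-1}{2\pi i}\Big[(\log\mu_j)\,\id+\log(\id+\nu_j)\Big],
\]
with the branch of $\log\mu_j$ chosen so that $\frac{-1}{2\pi i}\log\mu_j\in\tau$, gives an $\La$-linear endomorphism whose reduction has the single eigenvalue $\frac{-1}{2\pi i}\log\mu_j\in\tau$ (Lemma \ref{10.10.2019--1}(2)). As each $\Ga_j$ is, on $V_{\un\mu}$, a polynomial in $T_j$, the $\Ga_j$ commute. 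I then put $\ce=\co_D\ot_\CC V$---manifestly a $\La$-vector bundle (Definition \ref{24.06.2016--1}), giving (i)---with
\[
\na=\mm d+\sum_{j=1}^m\Ga_j\,\frac{\mm dx_j}{x_j};
\]
flatness follows from $[\Ga_i,\Ga_j]=0$, the residue along $H_j$ is $\Ga_j$ so the exponents lie in $\tau$, giving (ii), and the monodromy of $\ce|_{D^*}$ around $\ga_j$ is $\exp(-2\pi i\,\Ga_j)=T_j$. Hence $\ce|_{D^*}$ and $E$ carry the same local system and Theorem \ref{10.06.2016--2} supplies the $\La$-linear isomorphism $\ph$.

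For Part (2), the key observation is that the exponents of both $\ce$ and $\ce'$ lie in $\tau$, so the dissonances $\de(\na_{\ce'},\na_\ce)$ and $\de(\na_\ce,\na_{\ce'})$ vanish: a nonzero integer is never the difference of two elements of $\tau$, whence $\ZZ\cap(\tau\ominus\tau)=\{0\}$ and both dissonances are $0$ by Definition \ref{dissonance}. The horizontal isomorphism $\ps^{-1}\circ\ph:\ce|_{D^*}\to\ce'|_{D^*}$ therefore extends, by Corollary \ref{08.10.2019--2} (applied after transporting $\ce,\ce'$ to $X=D\ti S$ through the equivalence $\om$, where $\ce'$ becomes a vector bundle relatively to $S$), to a morphism $\xi:\ce\to\ce'(\de H)=\ce'$ in $\g C^{\log}_H(D)_{(\La)}$; applying the same to $\ph^{-1}\circ\ps$ yields $\xi':\ce'\to\ce$. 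Since $\xi'\xi$ and $\id_\ce$ both restrict to $\id_{\ce|_{D^*}}$, the uniqueness clause of Corollary \ref{08.10.2019--2} forces $\xi'\xi=\id_\ce$, and likewise $\xi\xi'=\id_{\ce'}$, so $\xi$ is an isomorphism. By construction $\ps\circ\xi|_{D^*}=\ph$, and uniqueness of $\xi$ subject to this condition is once more the injectivity of restriction on $\mm{Hom}$-groups (the identity principle).
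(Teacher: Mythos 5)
Your proposal is correct and follows essentially the same route as the paper: pass to the monodromy representation on a finite $\La$-module, construct commuting $\La$-linear logarithms with spectra in $\tau$, define $\ce=\co_D\ot_\CC V$ with $\na=\mm d+\sum_j\Ga_j\,\mm dx_j/x_j$, and deduce (2) from the extension and uniqueness results together with the vanishing of the dissonance. The only (inessential) difference is that the paper isolates the logarithm step as Proposition \ref{03.09.2019--1}, using the Jordan--Chevalley decomposition in $\mm{Aut}_\La(M)$, whereas you work blockwise on the simultaneous generalized eigenspaces --- the same computation, with mutual commutativity perhaps slightly more transparent in your version.
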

 
\begin{proof}(1) Let $b=(b_{1},\ldots,b_{n})\in D^*$   and write $\Ga$ for the fundamental group of $D^*$ based at $b$. Let 
\[\ga_1(t)=(b_{1}e^{2\pi \mm it},b_{ 2},\ldots ), \ga_2(t)=(b_{1},b_{2}e^{2\pi \mm it},\ldots),\quad\text{etc}\] so that $\Ga$  is the free abelian group generated by $\{\ga_j\}$. 
The dictionary of \cite[I.2, 5 ff.]{deligne70}  produces an equivalence of categories 
\[
\g C(D^*)_{(\La)}\arou\sim\mm{Rep}_\CC(\Ga)_{(\La)},\quad F\longmapsto F(p_0).\] Let $g_1,\ldots,g_m\in \mm{Aut}_\CC(E(p_0))$ be associated to $\ga_1,\ldots,\ga_m$; obviously, each $g_j$ is $\La$-linear. Employing Proposition \ref{03.09.2019--1}, we can find  $T_1,\ldots,T_m\in\mm{End}_\La (E(p_0))$ such that 
\begin{itemize}\item  $
\exp(-2\pi \mm i T_j)=g_j$,  
\item the eigenvalues of every $T_j$ belong to $\tau$.
\item   $T_j$ commutes with the remaining $T_k$. 

\end{itemize}
Let $\ce=\co_D\ot_\CC E(p_0)$ be endowed with the obvious action of $\La$ and define 
\[
\na_{\ce} (1\ot e)  =\sum_{j=1}^m [1\ot T_j(e)]\ot\fr{dx_j}{x_j}\in\Ga\left(D\,,\,\ce\ot\Om^1_{D}(\log H)\right).\]  Since $T_jT_k=T_kT_j$, the   connection $\na_\ce$ is integrable. 
Clearly, $\La$ acts by horizontal endomorphisms because each $T_j$ is $\La$-linear. This shows that $(\ce,\na_{\ce})$ is an object of $ {\g C}^{\log}_H(D)_{(\La)}$ enjoying property  $(i)$. 
Obviously, the residue endomorphism 
\[
\ce|_{H_j} \aro \ce|_{H_j} 
\]
sends a section $1\ot e$ to $1\ot T_j(e)$, so that condition $(ii)$ is fulfilled by construction of $T_j$. Finally, the restriction of $(\ce, \na_\ce)$ to $D^*$ is an object of $\mc(D^*)_{(\La)}$ which corresponds, under the equivalence $\g C(D^*)_{(\La)}\simeq \mathrm{Rep}_\CC(\Ga)_{(\La)}$, to the representations defined by 
$\ga_j\mapsto g_j$ because $\exp(-2\pi\mm iT_j)=g_j$. 
Consequently, the restriction of $\ce$ to $D^*$ is isomorphic to $E$.

(2)
Let now $\ce'$  and $\ps$ be as in  (2). 
By Proposition \ref{24.10.2019--2}
there exists an arrow in  $ {\g C}^{\log}_H(D)$, call it $\xi: \ce  \to   \ce'$, 
such that $\xi|_{D^*}=\ps^{-1}\ph$.
As the restriction of $\xi$ to $\g C(D^*)$ commutes with the actions of $\La$ and the restriction map 
\[\hh{\co_D}{\ce}{\ce'}\aro\hh{\co_{D^*}}{\ce|_{D^*}}{\ce'|_{D^*}}\]
is injective,  we conclude that $\xi$ commutes with the action of $\La$ as well. Arguing with $\ph^{-1}\ps$ instead of $\ps^{-1}\ph$, it is not hard to see that $\xi$ is an isomorphism from $\g C^{\log}_H(D)_{(\La)}$. The injectivity of the restriction arrow again proves the uniqueness statement concerning $\xi$. 
\end{proof}

\subsection{Extension of connections: the global case} Let $S$ be the analytic spectrum of  $\La$ and 
$f:X\to S$ a smooth morphism. Let $Y\subset X$ be a relative divisor with relative  normal crossings satisfying in addition Hypothesis \ref{14.10.2019--3}. As usual, we write $X^*$ for $X\smallsetminus Y$.  
 Putting Theorem \ref{02.10.2019--2},  the equivalences of Section 
\ref{03.05.2016--1} and Lemma \ref{30.05.2016--1} together,   we obtain: 
\begin{thm}[Deligne-Manin extensions]\label{deligne_manin_extension}Let $(E,\na)$ be an arbitrary object of $\mc(X^*/S)$. The following claims are true. 

\begin{enumerate}[(1)]
\item There exists $ \ce \in {\g C}_Y^{\log}(X/S)$ and an isomorphism 
\[
\ph:\ce|_{X^*}\arou\sim E  
\]
in $\g C(X^*/S)$. Moreover,  $ \ce$ can be found to have  the following properties: 
\begin{enumerate}[(i)]\item The $\co_X$-module $\ce$ is a   vector bundle relatively to $S$.  

\item The exponents of $ \ce $ along $Y$ are all on $\tau$. \end{enumerate}

\item Let $ \ce'\in {\g C}^{\log}_Y(X/S)$ enjoy properties (i) and (ii) of $(1)$, and let $\ps:\ce'|_{X^*}\stackrel \sim \to  E$
be an isomorphism. Then there exists an isomorphism 
$\xi: \ce\stackrel \sim \to \ce'$
rendering the diagram 
\[
\xymatrix{    \ce |_{X^*} \ar[r]^-{\ph} \ar[dr]_-{\xi|_{X^*}} & E   \\  &\ce' |_{X^*}\ar[u]_\ps           }
\]
commutative. Moreover, the isomorphism $\xi$ is the only one having this property.  \end{enumerate}
\end{thm}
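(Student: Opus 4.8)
The plan is to globalize the local statement Theorem \ref{02.10.2019--2} by the standard mechanism ``local existence together with uniqueness of morphisms yields gluing data.'' Since $|S|$ is a single point, every relative system of coordinates adapted to $Y$ identifies a neighbourhood with a product $D\ti S$, so the equivalences $\om$ of Section \ref{03.05.2016--1} are available on such patches; combined with Lemma \ref{30.05.2016--1} and Lemma \ref{16.06.2016--1} they let me pass freely between the relative categories $\g C(\,\cdot\,/S)$, $\g C_Y^{\log}(\,\cdot\,/S)$ and their $\La$-module counterparts over polydisks, carrying along the notions ``relative vector bundle,'' ``exponents in $\tau$,'' and ``dissonance.'' The one arithmetic observation underpinning everything is that any two elements of $\tau$ differing by an integer are equal; hence if two objects of $\g C_Y^{\log}(X/S)$ have all exponents in $\tau$, then $\NN\cap(\mm{Exp}_{Y_c}\ominus\mm{Exp}_{Y_c})\subset\{0\}$ and the dissonance between them, in either direction, is $0$. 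Consequently Proposition \ref{20.11.2019--1} extends any horizontal arrow between their restrictions to $X^*$ to a \emph{unique} horizontal arrow between the sheaves themselves, with no increase of poles.

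For existence, part (1), I would cover $X$ by patches $U_i\simeq D_i\ti S$ adapted to $Y$. On each $U_i$ I apply $\om$ to $E|_{U_i^*}$, invoke Theorem \ref{02.10.2019--2}-(1) to obtain a $\La$-vector bundle with a logarithmic connection having exponents in $\tau$ and an isomorphism onto $\om(E|_{U_i^*})$, and transport it back through $\om^{-1}$ to a relative vector bundle $\ce_i\in\g C^{\log}_{Y\cap U_i}(U_i/S)$ with exponents in $\tau$, together with $\ph_i\colon\ce_i|_{U_i^*}\arou\sim E|_{U_i^*}$. On $U_{ij}=U_i\cap U_j$ the composite $\ph_j^{-1}\ph_i$ is a horizontal isomorphism between the $X^*$-restrictions of $\ce_i$ and $\ce_j$; by the dissonance-zero observation and Proposition \ref{20.11.2019--1} it extends to a unique horizontal isomorphism $\xi_{ji}\colon\ce_i|_{U_{ij}}\arou\sim\ce_j|_{U_{ij}}$. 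On triple overlaps $\xi_{kj}\xi_{ji}$ and $\xi_{ki}$ both restrict to $\ph_k^{-1}\ph_i$ away from $Y$, so uniqueness forces the cocycle identity $\xi_{kj}\xi_{ji}=\xi_{ki}$ (and $\xi_{ii}=\id$). As coherent sheaves satisfy descent, the $(\ce_i,\xi_{ji})$ glue to a coherent $\co_X$-module $\ce$; the $\xi_{ji}$ being horizontal, the local connections glue to a logarithmic $S$-connection, and the $\ph_i$, compatible with the $\xi_{ji}$, glue to $\ph\colon\ce|_{X^*}\arou\sim E$. Being a relative vector bundle and having exponents in $\tau$ are local conditions, so $\ce$ inherits (i) and (ii).

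For uniqueness, part (2), given $\ce'$ with the same properties and $\ps\colon\ce'|_{X^*}\arou\sim E$, the isomorphism $\ps^{-1}\ph\colon\ce|_{X^*}\arou\sim\ce'|_{X^*}$ has dissonance $0$, so Proposition \ref{20.11.2019--1} produces a unique horizontal extension $\xi\colon\ce\to\ce'$ with $\xi|_{X^*}=\ps^{-1}\ph$, that is $\ps\circ\xi|_{X^*}=\ph$. The same applied to $\ph^{-1}\ps$ yields $\eta\colon\ce'\to\ce$; the composites $\eta\xi$ and $\id_\ce$ extend the same arrow $\id_{\ce|_{X^*}}$, hence agree by uniqueness (likewise $\xi\eta=\id_{\ce'}$), so $\xi$ is an isomorphism. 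Its uniqueness is precisely the uniqueness clause of Proposition \ref{20.11.2019--1}, equivalently the injectivity in Corollary \ref{16.06.2016--2}.

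The only genuine subtlety, and the step I would treat most carefully, is the descent in part (1): verifying the cocycle identity and that the glued connection remains logarithmic along $Y$. Both reduce to uniqueness of extensions of horizontal morphisms, which is exactly why property (ii) is indispensable; without exponents confined to $\tau$ the dissonances could be positive, the comparison maps would only extend into twists $\ce_j(\de Y)$, and neither the isomorphism property nor the cocycle bookkeeping would close up. A minor point to handle is that $Y_c\cap U_{ij}$ need not be connected: one re-indexes Hypothesis \ref{14.10.2019--3} by connected components on each overlap, and reads ``exponents in $\tau$'' as the pointwise condition on $\mm{Res}_c$, which is precisely what the local construction delivers.
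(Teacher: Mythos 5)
Your proposal is correct and is essentially the paper's own argument: the paper derives the theorem by ``putting together'' the local existence and uniqueness of Theorem \ref{02.10.2019--2}, the equivalences $\om$ of Section \ref{03.05.2016--1}, and Lemma \ref{30.05.2016--1}, leaving the gluing to the reader, and your write-up simply makes that gluing (cocycle identity via uniqueness of extensions, dissonance zero from exponents in $\tau$) explicit. The extra care you take about connectedness of $Y_c\cap U_{ij}$ and the zero-dissonance observation are exactly the points the paper's compressed statement relies on.
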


Given Theorem \ref{deligne_manin_extension}, the proof of point (1) in Theorem \ref{11.05.2016--2} is concluded. Also, as argued after its statement, the verification of item (2) is also finished.

 \subsection{An exercise on   the matrix exponential }\label{13.11.2019--1}Let   $M$ be a finite $\La$-module. In    proving   Theorem \ref{02.10.2019--2}  we needed the following simple result.

\begin{prp}\label{03.09.2019--1}
Let $\al,g_1,\ldots,g_m\in \mm{Aut}_\La(M)$ be given and suppose that $\al$ commutes with each $g_j$. Then, there exists a unique $X\in\mm{End}_\La(M)$ such that $\exp(2\pi \mm iX)=\al$ and $\mm{Sp}_X\subset\tau$. In addition, $X$ commutes with each $g_j$. 
\end{prp}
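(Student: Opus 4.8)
The plan is to build $X$ one generalized eigenspace at a time, via the classical scalar--plus--nilpotent logarithm, the only genuinely new point being that every step respects the $\La$-action; this is precisely what Lemma \ref{10.10.2019--1} secures. First I would regard $\al$ as the $\CC$-linear endomorphism $\om\al$ of the finite dimensional complex vector space $M$. Since $\al$ is invertible, $0\notin\mm{Sp}_{\om\al}$, and over $\CC$ one has the generalized eigenspace decomposition
\[
M=\bigoplus_{\la\in\mm{Sp}_{\om\al}}\bb G(\om\al,\la).
\]
By Lemma \ref{10.10.2019--1}-(1) each summand $M_\la:=\bb G(\om\al,\la)$ is a $\La$-submodule, so this is a decomposition into $\La$-submodules stable under $\mm{End}_\La(M)$. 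As each $g_j$ commutes with $\al$, hence with $\om\al$, it preserves every $M_\la$; we may therefore treat one summand at a time and glue.

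On $M_\la$ the endomorphism $\al-\la\,\id$ is nilpotent and $\La$-linear, so $\nu_\la:=\la^{-1}(\al|_{M_\la}-\la\,\id)$ is a $\La$-linear nilpotent endomorphism and $\al|_{M_\la}=\la(\id+\nu_\la)$ is $\la$ times a unipotent operator. Let $\mu_\la\in\tau$ be the unique representative with $\exp(2\pi\mm i\,\mu_\la)=\la$, which exists and is unique because $\exp(2\pi\mm i\,\cdot)\colon\tau\to\CC^\ti$ is a bijection. Set $Y_\la:=\frac{1}{2\pi\mm i}\log(\id+\nu_\la)$, where $\log(\id+\nu_\la)=\sum_{k\ge1}\frac{(-1)^{k-1}}{k}\nu_\la^k$ is a finite sum; being a $\CC$-polynomial in the $\La$-linear map $\nu_\la$, it is $\La$-linear and nilpotent. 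Define $X|_{M_\la}:=\mu_\la\,\id+Y_\la$. Since $\mu_\la\,\id$ is scalar it commutes with $Y_\la$, whence $\exp(2\pi\mm i\,X|_{M_\la})=\exp(2\pi\mm i\,\mu_\la)\exp(2\pi\mm i\,Y_\la)=\la(\id+\nu_\la)=\al|_{M_\la}$; and as $Y_\la$ is nilpotent, $\mm{Sp}_{X|_{M_\la}}=\{\mu_\la\}\subset\tau$. Gluing gives $X\in\mm{End}_\La(M)$ with $\exp(2\pi\mm i\,X)=\al$ and $\mm{Sp}_X\subset\tau$. Moreover $g_j$, commuting with $\al|_{M_\la}$, commutes with $\nu_\la$, hence with the polynomial $Y_\la$ and with $X|_{M_\la}$; thus $g_j$ commutes with $X$.

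For uniqueness, let $X'\in\mm{End}_\La(M)$ also satisfy $\exp(2\pi\mm i\,X')=\al$ and $\mm{Sp}_{X'}\subset\tau$. Then $X'$ commutes with $\al=\exp(2\pi\mm i\,X')$, so it preserves each $M_\la$. Restricting to $M_\la$, any eigenvalue $\mu'$ of $X'|_{M_\la}$ satisfies $\exp(2\pi\mm i\,\mu')=\la$ and lies in $\tau$, so $\mu'=\mu_\la$ by uniqueness of the representative; hence $N':=X'|_{M_\la}-\mu_\la\,\id$ is nilpotent. The relation $\exp(2\pi\mm i\,X'|_{M_\la})=\al|_{M_\la}$ then reads $\exp(2\pi\mm i\,N')=\id+\nu_\la=\exp(2\pi\mm i\,Y_\la)$, and since $\exp$ restricts to a bijection from nilpotent to unipotent operators with inverse $\log$, applying $\log$ yields $N'=Y_\la$, so $X'|_{M_\la}=X|_{M_\la}$ and finally $X'=X$.

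The one delicate point is keeping every construction $\La$-linear, and the entire argument hinges on Lemma \ref{10.10.2019--1}: once the generalized eigenspace decomposition is known to be a decomposition of $\La$-modules, the usual scalar--plus--nilpotent logarithm---given by $\CC$-coefficient power series in $\La$-linear maps---automatically preserves the $\La$-structure, so no obstruction remains beyond the classical finite dimensional statement.
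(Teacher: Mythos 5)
Your proof is correct. It reaches the same operator as the paper but organizes the argument differently: the paper takes the multiplicative Jordan--Chevalley decomposition $\al=s\po u$ \emph{inside the linear algebraic group} $\bigcap_j Z_j$ (the common centralizer of the $g_j$ in $\mm{Aut}_\La(M)$, citing Steinberg), so that $\La$-linearity of $s,u$ and their commutation with each $g_j$ come for free; it then defines $S$ blockwise as $\ell(\vr)\po\mm{Id}$ on the eigenspaces $\bb E(s,\vr)$, sets $N=\tfrac{1}{2\pi\mm i}\log u$, and puts $X=S+N$. You instead decompose $M$ into the generalized eigenspaces $\bb G(\om\al,\la)$ of $\al$ itself, observe via Lemma \ref{10.10.2019--1} that these are $\La$-submodules preserved by anything commuting with $\al$, and perform the scalar-plus-nilpotent logarithm on each block. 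The two constructions produce the same $X$ (on $\bb G(\om\al,\la)$ one has $s=\la\,\id$ and $u=\id+\nu_\la$), but your route is more self-contained: it replaces the appeal to the Jordan--Chevalley decomposition in an algebraic group by the elementary fact that commuting operators preserve generalized eigenspaces, and it gives a complete blockwise uniqueness argument (eigenvalues of $X'|_{M_\la}$ are forced to equal $\mu_\la$, then $\log$ is injective on unipotents) where the paper relegates uniqueness to a separate lemma that is only sketched. The one point worth stating explicitly in your uniqueness step is the standard fact that the eigenvalues of $\exp(2\pi\mm i X'|_{M_\la})$ are the exponentials of those of $X'|_{M_\la}$ (e.g.\ by triangularization); with that said, the argument is complete.
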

\begin{proof}
Let $Z_j$ stand for the center of $g_j$ in the linear algebraic group $\mm{Aut}_\La(M)$ and let   $\al=s\po u$ be the Jordan-Chevalley decomposition of $\al$ in $\cap_jZ_j$ \cite[2.4, 29ff]{steinberg74}.   Since $u-\mm {Id}$ is nilpotent, the series   
\[N = \fr1{2\pi\mm i}\sum_{k\ge1}\fr{(-1)^{k-1}}k (u-\mm {Id})^k\]
is in fact a sum and   $\exp(2\pi \mm i N)  = u$. Obviously $N$ belongs to $\mm{End}_\La (M)$ and commutes with each $g_j$. 

For every $\vr\in\mm{Sp}_{\al}=\mm{Sp}_{s}$, let $\ell(\vr)$ stand for the unique element of $z\in\tau$ such that $e^{2\pi\mm i z}=\vr$. 
Because $s$ is semi-simple, we have the direct sum decomposition  
\[
M=\bigoplus_{\vr\in\mm{Sp}_s} \bb E(s,\vr),
\] 
where in addition, each $\bb E(s,\vr)$ is stable under $u$, $\La$ and each $g_j$. Define $S\in\mm{End}_\CC(M)$ by decreeing that 
\[
S|_{\bb E(s,\vr)} = \ell(\vr)\po\mm{Id}.
\]
It then follows that $S\in \mm{End}_\La (M)$, that $S$ commutes with $u$, and a fortiori with $N$, and that $\exp (2\pi\mm iS)= s$. In addition, $S$ commutes with each $g_j$. 
 We now put $X=S+N$ which shows the existence of an element in $\mm{End}_\La (M)$ commuting with each $g_j$, whose spectrum is contained in $\tau$ and such that  $\exp(2\pi\mm iX)=\al$. The following well-known Lemma establishes uniqueness. \end{proof}

\begin{lem}Let $X$ and $X'$ be $\CC$-linear endomorphism of $M$ such  $\mm{Sp}_{X}$ and $\mm{Sp}_{X'}$ are contained in $\tau$. Then $\exp (2\pi\mm iX)=\exp(2\pi\mm iX')$ implies $X=X'$.   
\end{lem}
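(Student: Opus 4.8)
The plan is to exploit the hypothesis that both spectra land in the fixed set $\tau$, which by construction contains exactly one representative of each class of $\CC$ modulo $\ZZ$. The key observation is that the exponential $\exp(2\pi\mm i\,\cdot\,)$ fails to be injective on endomorphisms precisely because eigenvalues may differ by integers; restricting to $\tau$ removes this ambiguity. First I would recall the Jordan--Chevalley decompositions $X=S+N$ and $X'=S'+N'$ into commuting semisimple and nilpotent parts. The point is that $\exp(2\pi\mm iX)$ has multiplicative Jordan decomposition with semisimple part $\exp(2\pi\mm iS)$ and unipotent part $\exp(2\pi\mm iN)$ (because $S$ and $N$ commute, these factors commute, the first is semisimple, and the second is unipotent since $N$ is nilpotent). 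By uniqueness of the multiplicative Jordan decomposition of $\exp(2\pi\mm iX)=\exp(2\pi\mm iX')$, I would deduce $\exp(2\pi\mm iS)=\exp(2\pi\mm iS')$ and $\exp(2\pi\mm iN)=\exp(2\pi\mm iN')$ separately.

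Next I would treat the semisimple parts. Since $S$ and $S'$ are semisimple with $\exp(2\pi\mm iS)=\exp(2\pi\mm iS')$, they are simultaneously diagonalizable (they commute, being polynomials in the same semisimple operator $\exp(2\pi\mm iS)$), and on each common eigenspace $S$ acts by a scalar $z$ and $S'$ by a scalar $z'$ with $e^{2\pi\mm iz}=e^{2\pi\mm iz'}$; hence $z-z'\in\ZZ$. But both $z,z'\in\mm{Sp}_S\cup\mm{Sp}_{S'}\subset\tau$, and $\tau$ meets each residue class modulo $\ZZ$ exactly once, forcing $z=z'$. Therefore $S=S'$. For the nilpotent parts, I would use that the restriction of $\exp(2\pi\mm i\,\cdot\,)$ to nilpotent endomorphisms is injective: its inverse is given by the convergent (in fact terminating) logarithm series, so $\exp(2\pi\mm iN)=\exp(2\pi\mm iN')$ yields $N=N'$ directly. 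Combining, $X=S+N=S'+N'=X'$.

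The main obstacle I anticipate is justifying cleanly that $\mm{Sp}_S$ and $\mm{Sp}_{S'}$ are the relevant sets lying in $\tau$: strictly the hypothesis puts $\mm{Sp}_X\subset\tau$ and $\mm{Sp}_{X'}\subset\tau$, and since $S$ is the semisimple part of $X$ one has $\mm{Sp}_S=\mm{Sp}_X$, so this is immediate, but it should be stated explicitly. A secondary subtlety is confirming that the semisimple and unipotent factors of $\exp(2\pi\mm iX)$ coincide with $\exp(2\pi\mm iS)$ and $\exp(2\pi\mm iN)$; this follows from the commuting relation $SN=NS$ together with uniqueness of the multiplicative Jordan--Chevalley decomposition, which I would invoke by reference rather than reprove. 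Everything else is routine linear algebra, so no serious difficulty remains once the role of $\tau$ is correctly isolated.
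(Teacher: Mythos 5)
Your argument is correct and follows essentially the same route as the paper's own sketch: reduce to the diagonalisable and nilpotent cases via the additive Jordan--Chevalley decomposition, using that $\exp$ carries semisimple (resp.\ nilpotent) endomorphisms to semisimple (resp.\ unipotent) automorphisms and that the multiplicative Jordan decomposition is unique. Your treatment of the two cases (the role of $\tau$ for the semisimple part, the terminating logarithm for the nilpotent part) simply fills in the details the paper leaves to the reader.
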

\begin{proof} This is a well-known exercise, but we were unable to find a reference.  Let us sketch a proof. One first deals with the case where $X$ and $X'$ are diagonalisable, resp. nilpotent. Then one applies the additive Jordan-Chevalley decomposition together with the fact that the exponential sends diagonalisable endomorphism, resp. nilpotent,  to diagonalisable automorphisms, resp. unipotent. 
\end{proof}

\section{Connections in the algebraic and analytic case}\label{17.12.2019--1}

The following conventions are fixed in this section: $S$ is a noetherian $\CC$-scheme,   $f:X\to S$ is  a smooth morphism of   $\CC$-schemes, $Y\subset X$ a relative effective (positive) divisor having relative normal crossings \cite[4.0, p.187]{katz70}. The ideal of $Y$ shall be denoted by $\ci$. 
Write  $X^*=X\smallsetminus Y$ and let $u:X^*\to X$ stand for the inclusion. 
\subsection{Connections in the algebraic case}\label{31.10.2019--1}

We define the categories $\g C(X/S)$ and  $\g C_Y^{\log}(X/S)$ in complete analogy with  Section \ref{25.09.2019--1} (see also \cite[Section 4]{katz70}). On the other hand,  if we follow the path of Section \ref{25.09.2019--1} and introduce  $\g {MC}_Y(X/S)$, we obtain simply $\g C(X^*/S)$. Indeed,  we see that $\co_X(*Y)=u_*(\co_{X^*})$ and because $u$ is an affine morphism \ega{II}{}{1.2.1, p.6}, we conclude that $\coh{\co_X(*Y)}$ is none other than $\coh{\co_{X^*}}$ \ega{II}{}{1.4, 9ff}.

\begin{dfn}
We let $\g C_Y^{\rm rs}(X^*/S)$ be the {\it full} subcategory of $\g C(X^*/S)$ whose object are isomorphic to some object in the image of the restriction functor $\g C_Y^{\log}(X/S)\to\g C(X^*/S)$. (More succinctly, $\g C_Y^{\rm rs}(X^*/S)$ is the essential image of the restriction.) We refer to objects in  $\g C^{\rm rs}_Y(X^*/S)$ as {\it regular-singular} $S$-connections on $X^*$.  If $(E,\na_E)\in \g C^{\rm rs}_Y(X^*/S)$, any $(\ce,\na_\ce)\in\g C^{\rm log}_Y(X/S)$ such that $ (\ce,\na_\ce)|_{X^*}\simeq(E,\na_E)$ is called a {\it logarithmic model} of $(E,\na_E)$. 
\end{dfn}

\begin{rmk}The reference to $Y$ in the notation $\g C^{\rm rs}_Y(X^*/S)$ is there to remind us of the dependence of $X$; it is envisageable to develop, as in \cite{deligne70}, a more general theory, but we have chosen not to do so. 
\end{rmk}

Note that  {\it no particular} assumption on the coherent modules defining logarithmic models is made. Of course, it is possible to simplify certain models as the following constructions show.  

Let $\ce\in\coh{\co_X}$;  define \[(0:\ci^k)_\ce=\ch\!om_{X}(\co_X/\ci^k,\ce);\]
this is naturally a coherent submodule of $\ce$ supported  on $Y$. Put $(0:\ci^\infty)_\ce=\bigcup_k(0:\ci^k)_\ce$ and note that   $(0:\ci^\infty)_\ce$ is the kernel of $\ce\to u_*(u^*\ce)$. 

\begin{dfn}[{\ega{IV}{2}{5.9.9, p.112}}]We say that $\ce$ is $Y$-pure if $(0:\ci^\infty)_\ce=0$.  
\end{dfn}
We now note that obtaining $Y$-pure logarithmic models is always possible:

\begin{lem}\label{10.01.2020--1}For each   $(\ce,\na)\in\g C_Y^{\rm log}(X/S)$, the $\co_X$-submodule  $(0:\ci^k)_\ce$ is a subconnection. In particular, each $(E,\na)\in\g C_Y^{\rm rs}(X^*/S)$ has a logarithmic model $\ce$ which is $Y$-pure.\qed
\end{lem}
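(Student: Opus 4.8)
The plan is to prove the first assertion by a one-line computation with the Leibniz rule, and then to obtain the $Y$-pure model by quotienting by $(0:\ci^\infty)_\ce$. Since stability under a connection is a local condition and $X$ is locally noetherian, I would argue locally throughout.

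For the first assertion, recall that a logarithmic connection is an action $\na:\cd\!er_Y(X/S)\to\ce\!nd_{\co_S}(\ce)$ obeying $\na(\pd)(ae)=\pd(a)e+a\na(\pd)(e)$, and that $\cd\!er_Y(X/S)$ consists of the derivations with $\pd(\ci)\subseteq\ci$. The first step is to note that every such $\pd$ also satisfies $\pd(\ci^k)\subseteq\ci^k$; indeed, the Leibniz rule expresses $\pd(a_1\cdots a_k)$ as a sum of products in which $k-1$ factors lie in $\ci$ and one factor is some $\pd(a_i)\in\ci$. Granting this, fix a local section $e$ with $\ci^ke=0$ and any $a\in\ci^k$; applying the Leibniz rule to $ae=0$ yields $a\na(\pd)(e)=-\pd(a)e$, and the right-hand side vanishes because $\pd(a)\in\ci^k$ kills $e$. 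Hence $\na(\pd)(e)\in(0:\ci^k)_\ce$, and since $(0:\ci^k)_\ce=\ch\!om_X(\co_X/\ci^k,\ce)$ is coherent it is a subconnection.

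For the second assertion, start from an arbitrary logarithmic model $(\ce,\na)$ of $(E,\na_E)$. As an increasing union of the coherent subconnections $(0:\ci^k)_\ce$, the submodule $(0:\ci^\infty)_\ce$ is itself a subconnection (locally it equals $(0:\ci^N)_\ce$ for large $N$, by noetherianity, hence is coherent). The quotient $\ce'=\ce/(0:\ci^\infty)_\ce$ then inherits a logarithmic connection $\na'$, with horizontal projection. Because $(0:\ci^\infty)_\ce$ is supported on $Y$, it restricts to $0$ on $X^*$, so $(\ce',\na')|_{X^*}\cong(\ce,\na)|_{X^*}\cong(E,\na_E)$ and $\ce'$ is again a logarithmic model. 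Finally $\ce'$ is $Y$-pure: if a local section $\ov e$ of $\ce'$ satisfies $\ci^k\ov e=0$, lift it to $e\in\ce$, so that $\ci^ke\subseteq(0:\ci^N)_\ce$ and thus $\ci^{N+k}e=0$, giving $e\in(0:\ci^\infty)_\ce$, i.e. $\ov e=0$.

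The argument is essentially formal; the only point that is not pure algebra of the Leibniz rule is the local stabilization of the chain $(0:\ci)_\ce\subseteq(0:\ci^2)_\ce\subseteq\cdots$, which is where coherence and local noetherianity of $X$ enter.
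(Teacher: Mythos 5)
Your proof is correct and is precisely the routine Leibniz-rule argument the paper leaves implicit (the lemma is stated with its proof omitted): stability of $(0:\ci^k)_\ce$ follows from $\pd(\ci^k)\subseteq\ci^k$ for $\pd\in\cd\!er_Y(X/S)$, and the $Y$-pure model is $\ce/(0:\ci^\infty)_\ce$, whose restriction to $X^*$ is unchanged because the kernel is supported on $Y$. No gaps; the local stabilization of the chain $(0:\ci^k)_\ce$ by noetherianity is exactly the right point to flag.
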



Let now $E\in\g C^{\rm rs}_Y(X^*/S)$  and let $F\subset E$ be a subobject. If $\ce\in\g C^{\log}_Y(X/S)$ is a $Y$-pure logarithmic model, so that $\ce$ is naturally a $\co_X$-submodule of $u_*(E)$, we define  $\cf:=u_*(F)\cap\ce$, which is a coherent $\co_X$-module \ega{I}{}{9.2.2, p.172}.  
In addition, it is clear that $\cf$ is stable under $\cd\!er_Y(X/S)$, and hence $\cf$ is   a logarithmic model for $F$. In the same vein, if $E\to Q$ is an epimorphism with kernel $N$, let $\cn\subset \ce$ be the logarithmic model of $N$ previously constructed. It then follows that $\cq:=\ce/\cn$ is a logarithmic model for $Q$ and we have verified the truth of the first two statements of the following proposition. The final claims are very simple and  left without proof. 

\begin{prp}\label{16.01.2019--1}The full subcategory $\g C^{\rm rs}_Y(X^*/S)$ of $\g C(X^*/S)$ is stable under subobjects and quotients. It is also stable under tensor products and duals. \qed
\end{prp}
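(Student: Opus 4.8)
The plan is to realise each of the four closure properties by producing an explicit logarithmic model of the resulting connection, thereby exhibiting it in the essential image of the restriction functor $\g C^{\log}_Y(X/S)\to\g C(X^*/S)$. I would fix $(E,\na_E)\in\g C^{\rm rs}_Y(X^*/S)$ and, invoking Lemma \ref{10.01.2020--1}, a $Y$-pure logarithmic model $\ce\in\g C^{\log}_Y(X/S)$; by $Y$-purity the canonical map $\ce\to u_*(u^*\ce)=u_*(E)$ is injective, so I regard $\ce$ as a coherent $\co_X$-submodule of $u_*(E)$.

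For a subobject $F\subset E$ I would set $\cf:=u_*(F)\cap\ce$. This is coherent by \ega{I}{}{9.2.2, p.172} (being in any case a subsheaf of the coherent $\ce$ over the noetherian $X$), and it is stable under $\cd\!er_Y(X/S)$: indeed $u_*(F)$ is preserved because $F$ is a subconnection on $X^*$, while $\ce$ is preserved because it is logarithmic, so their intersection is preserved as well. Restricting to $X^*$ yields $\cf|_{X^*}=u_*(F)|_{X^*}\cap\ce|_{X^*}=F$, whence $\cf$ is a logarithmic model of $F$ and $F\in\g C^{\rm rs}_Y(X^*/S)$. For a quotient $E\to Q$ with kernel $N$, I would first form the model $\cn\subset\ce$ of $N$ exactly as above and then take $\cq:=\ce/\cn\in\g C^{\log}_Y(X/S)$; its restriction to $X^*$ is $E/N=Q$, so $Q$ is regular-singular.

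For tensor products and duals I would use that $\g C^{\log}_Y(X/S)$ is itself a tensor category with duals and that $u^*$ respects these structures. Given logarithmic models $\ce_1,\ce_2$ of $E_1,E_2\in\g C^{\rm rs}_Y(X^*/S)$, the tensor product $\ce_1\ot_{\co_X}\ce_2$ is again an object of $\g C^{\log}_Y(X/S)$ whose restriction is $E_1\ot E_2$, giving stability under $\ot$. For the dual I would pass to $\ce^\vee:=\ch\!om_X(\ce,\co_X)$, which carries a natural logarithmic connection by \cite[4.3]{katz70}; since forming $\ch\!om$ into the structure sheaf commutes with restriction to the open $X^*$, one has $\ce^\vee|_{X^*}=\ch\!om_{X^*}(E,\co_{X^*})=E^\vee$, giving stability under duals.

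The one point that requires genuine care is the stability in the subobject case. Coherence of $\cf$ is not serious, being a submodule of a coherent sheaf over a noetherian scheme; the real content is the compatibility of actions. What must be observed is that, thanks to $Y$-purity, the logarithmic action of $\cd\!er_Y(X/S)$ on $\ce$ coincides with the restriction to $\ce$ of the meromorphic action on $u_*(E)$ extending $\na_E$, so that $\ce$ and $u_*(F)$ are each stable under one and the same action and their intersection is therefore a subconnection. Once this identification is granted, all four assertions follow from the uniform device of restricting an explicitly constructed logarithmic object, which is why the final two can fairly be called simple.
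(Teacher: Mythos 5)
Your proof is correct and follows essentially the same route as the paper: the paper likewise takes a $Y$-pure logarithmic model $\ce\subset u_*(E)$, forms $\cf:=u_*(F)\cap\ce$ for subobjects and $\ce/\cn$ for quotients, and dismisses tensor products and duals as immediate from the tensor structure on $\g C^{\log}_Y(X/S)$. Your closing remark making explicit why $Y$-purity ensures the two actions on $\ce$ agree inside $u_*(E)$ is a correct elaboration of what the paper leaves as "it is clear."
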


\subsection{Construction of models when the base is    $\spc\La$}\label{31.01.2020--4}
In addition to the assumptions of the beginning of the section, we   suppose that $f$ is {\it proper}, that $S=\spc \La$ and that $Y=\sum_cY_c$, where each $Y_c$ is connected and smooth over $S$. We write $\g X$, $\g S$,   $\g Y$, etc for the analytifications of $X$, $S$, $Y$, etc. Note that $\g Y$ now satisfies Hypothesis \ref{14.10.2019--3} since each $Y_c^\an=\g Y_c$ is connected \cite[XII.2.4]{SGA1} and smooth \cite[XII.3.1]{SGA1}. 
As usual, $X^*=X\smallsetminus Y$ and $\g X^*=\g X\smallsetminus\g Y=(X\smallsetminus Y)^\an$. 

The following two results, Proposition \ref{GAGA} and Lemma \ref{05.11.2019--2}, shall be employed in structuring our arguments. The first one is a simple consequence of GAGA \cite[Expos\'e XIII, Theorem 4.4]{SGA1}  and the unconvinced reader will find some details in \cite[5.4]{hai-dos_santos19}. While Lemma \ref{05.11.2019--2} is essentially GAGA, we find necessary to proceed   carefully. 

\begin{prp}[GAGA]\label{GAGA}
The analytification functor 
\[
\g C_Y^{\log}(X/S)\aro \g C_{\g Y}^{\log}(\g X/\g S)
\]
is an equivalence. 
\qed
\end{prp}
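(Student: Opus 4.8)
The plan is to deduce everything from a single substantial input, namely GAGA in the relative form of \cite[Expos\'e XIII, Theorem 4.4]{SGA1}. Since $\La$ is finite dimensional over $\CC$, the morphism $S=\spc\La\to\spc\CC$ is finite, so $f:X\to S$ proper forces $X$ to be proper over $\CC$; consequently the analytification functor $\coh{\co_X}\to\coh{\co_{\g X}}$ is an equivalence of abelian categories, exact in both directions, bijective on all $\mathrm{Hom}$-groups of coherent sheaves, and compatible with tensor products. The only real obstacle to concluding immediately is that neither a connection nor the integrability condition is an $\co_X$-linear datum, so GAGA does not apply to $\na$ on the nose. The whole point of the argument is therefore to repackage horizontality, the connection itself, and its curvature as genuinely $\co_X$-linear information between coherent sheaves, to which GAGA \emph{does} apply. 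Throughout I would use the routine compatibility $(\Om^1_{X/S}(\log Y))^\an\cong\Om^1_{\g X/\g S}(\log\g Y)$, and its degree-two analogue, which holds because analytification commutes with K\"ahler differentials and with the logarithmic construction read off from adapted coordinates.

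First I would establish full faithfulness. Given $(E,\na)$ and $(E',\na')$ in $\g C_Y^{\log}(X/S)$ and an $\co_X$-linear $\ph:E\to E'$, the defect of horizontality
\[
h(\ph)\;=\;\na'\circ\ph-(\ph\ot\id)\circ\na\;\in\;\hh{\co_X}{E}{E'\ot\Om^1_{X/S}(\log Y)}
\]
is $\co_X$-linear, the Leibniz terms cancelling. Its formation commutes with analytification, so $(h(\ph))^\an$ is exactly the analytic horizontality defect of $\ph^\an$. Since $\cdot^\an$ is bijective on $\hh{\co_X}{E}{E'}$ and faithful on $\hh{\co_X}{E}{E'\ot\Om^1_{X/S}(\log Y)}$ by GAGA, one gets $h(\ph)=0$ if and only if its analytification vanishes; hence analytification induces a bijection between horizontal morphisms upstairs and downstairs.

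For essential surjectivity, let $(\ce,\na^\an)$ be an object of $\g C_{\g Y}^{\log}(\g X/\g S)$. By GAGA I would first choose $E\in\coh{\co_X}$ with $E^\an\cong\ce$. To transport the connection, I would invoke the standard reformulation of a logarithmic connection on $E$ as an $\co_X$-linear splitting of the logarithmic Atiyah (principal-parts) sequence
\[
0\aro E\ot\Om^1_{X/S}(\log Y)\aro \cp(E)\aro E\aro 0,
\]
where $\cp(E)$ is the coherent sheaf of first-order log principal parts of $E$ and a splitting is precisely a right inverse $s\in\hh{\co_X}{E}{\cp(E)}$ of the projection (the Leibniz rule being encoded in the module structure of $\cp(E)$). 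The formation of this sequence is functorial, commutes with $\cdot^\an$ and with tensor products, and analytification is exact; therefore the analytic splitting corresponding to $\na^\an$ transports, via the GAGA bijection on $\hh{\co_X}{E}{\cp(E)}$ (compatibly with the projection and the identity), to a unique algebraic splitting, i.e. a logarithmic connection $\na$ on $E$ whose analytification recovers $\na^\an$. Integrability is then automatic: the curvature $R(\na)\in\hh{\co_X}{E}{E\ot\Om^2_{X/S}(\log Y)}$ is $\co_X$-linear with $(R(\na))^\an=R(\na^\an)=0$, and faithfulness of $\cdot^\an$ forces $R(\na)=0$, so $(E,\na)\in\g C_Y^{\log}(X/S)$ analytifies to $(\ce,\na^\an)$.

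The hard part, as anticipated, is not analytic or geometric at all but purely the bookkeeping that turns the Leibniz-type, non-$\co_X$-linear data into honest $\co_X$-linear data: the $\co_X$-linearity of $h(\ph)$ and of $R(\na)$, and the realisation of a connection as a splitting of a coherent sequence. Once these reformulations are in place, every remaining assertion is a formal consequence of GAGA together with the exactness and tensor-compatibility of analytification, which is why the statement deserves to be called a simple consequence of GAGA.
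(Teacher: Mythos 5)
Your argument is correct and is exactly the route the paper intends: the statement is left unproved in the text (marked \qed) with the remark that it is ``a simple consequence of GAGA \cite[Expos\'e XIII, Theorem 4.4]{SGA1}'' with details in \cite[5.4]{hai-dos_santos19}, and your reduction of the connection data to $\co_X$-linear data (horizontality defect, curvature, splitting of the logarithmic principal-parts sequence) is the standard way of filling in those details. No gaps.
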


\begin{lem}\label{05.11.2019--2}Let $\ce$ and $\cf$ be coherent $\co_X$-modules and assume that $\cf$ is $Y$-pure. Then 
\[
\hh{\co_X(*Y)}{\ce(*Y)}{\cf(*Y)}\aro \hh{\co_{\g X}(*\g Y)}{\ce^{\an}(*\g Y)}{\cf^\an(*\g Y)}
\]
is bijective. 
\end{lem}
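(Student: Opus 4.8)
The plan is to reduce, on both sides of the arrow, the $\mathrm{Hom}$-set between meromorphic modules to a \emph{filtered colimit} of $\mathrm{Hom}$-sets between honest coherent sheaves twisted by multiples of the divisor, and then to apply GAGA \cite[Expos\'e XIII, Theorem 4.4]{SGA1} one twist at a time. Throughout I use that $f$ proper and $\La$ Artinian force $\g X$ to be \emph{compact}.

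First I would invoke the extension-of-scalars adjunction. Since $\ce(*Y)=\ce\ot_{\co_X}\co_X(*Y)$, a $\co_X(*Y)$-linear map $\ce(*Y)\to\cf(*Y)$ is the same as a $\co_X$-linear map $\ce\to\cf(*Y)$, so that
\[
\hh{\co_X(*Y)}{\ce(*Y)}{\cf(*Y)}\cong\hh{\co_X}{\ce}{\cf(*Y)},
\]
and likewise analytically. Now $\cf(*Y)=\lid_k\cf(kY)$ and $\cf^\an(*\g Y)=\lid_k\cf^\an(k\g Y)$. On the algebraic side $\ce$ is coherent and $X$ is noetherian, so both $\ch\!om_X(\ce,-)$ and $\Ga(X,-)$ commute with filtered colimits; hence $\hh{\co_X}{\ce}{\cf(*Y)}=\lid_k\hh{\co_X}{\ce}{\cf(kY)}$. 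For each fixed $k$, GAGA gives an isomorphism $\hh{\co_X}{\ce}{\cf(kY)}\arou\sim\hh{\co_{\g X}}{\ce^\an}{(\cf(kY))^\an}$, and since analytification commutes with line-bundle twists one has $(\cf(kY))^\an\cong\cf^\an(k\g Y)$. These isomorphisms are functorial, hence compatible with the transition maps in $k$, so passing to the colimit reduces the whole statement to the analytic identity
\[
\hh{\co_{\g X}}{\ce^\an}{\cf^\an(*\g Y)}=\lid_k\hh{\co_{\g X}}{\ce^\an}{\cf^\an(k\g Y)}.
\]

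The main obstacle is precisely this last identity, which I would prove by a \textbf{bounded pole order} argument exploiting compactness of $\g X$ together with $\g Y$-purity of $\cf^\an$. That $\cf^\an$ is $\g Y$-pure follows from $\cf$ being $Y$-pure, because analytification is flat and commutes with $\ch\!om_X(\co_X/\ci^k,-)$, so it carries $(0:\ci^\infty)_\cf$ to the corresponding submodule of $\cf^\an$. Purity makes every inclusion $\cf^\an(k\g Y)\hookrightarrow\cf^\an(*\g Y)$ injective. Given a $\co_{\g X}$-morphism $\psi:\ce^\an\to\cf^\an(*\g Y)$, let $\bar\psi_k:\ce^\an\to\cf^\an(*\g Y)/\cf^\an(k\g Y)$ be the induced map; its image is of finite type (as $\ce^\an$ is coherent), so its support is closed and $V_k:=\{p:\bar\psi_{k,p}=0\}$ is open. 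As $\ce^\an$ is of finite type and the colimit is filtered, the stalk of $\psi$ at any point lands in some $\cf^\an(k\g Y)_p$, whence $\bigcup_kV_k=\g X$; compactness then produces a single $K$ with $V_K=\g X$.

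Finally, injectivity of $\cf^\an(K\g Y)\hookrightarrow\cf^\an(*\g Y)$ guarantees that the local factorizations of $\psi$ through $\cf^\an(K\g Y)$ are \emph{unique}, so they glue to a global factorization $\ce^\an\to\cf^\an(K\g Y)$; this is exactly the surjectivity of $\lid_k\hh{\co_{\g X}}{\ce^\an}{\cf^\an(k\g Y)}\to\hh{\co_{\g X}}{\ce^\an}{\cf^\an(*\g Y)}$. Injectivity of the same map is immediate, again from injectivity of the transition maps in the filtered system. This establishes the displayed analytic identity, and combined with the earlier reduction and the stagewise GAGA isomorphisms it shows that the natural analytification arrow is bijective, as desired.
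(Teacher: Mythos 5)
Your proposal is correct and follows essentially the same route as the paper: reduce by the extension-of-scalars adjunction to $\hh{\co_X}{\ce}{\cf(*Y)}$, identify both sides with the filtered colimits $\lid_k\hh{}{\ce}{\cf(kY)}$ and $\lid_k\hh{}{\ce^\an}{\cf^\an(k\g Y)}$ (using compactness for existence of a uniform pole order and $Y$-purity for the injectivity needed to glue), and apply GAGA one twist at a time. The only difference is cosmetic: you supply the details of the analytic colimit identity that the paper leaves as a sketch.
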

\begin{proof}[Sketch of proof.] We require two steps.   

{\it Step 1.} We claim that the natural arrows
\[\Ph:
 \lid_k\hh{\co_X}{\ce}{\cf(kY)}\aro \hh{\co_X}{\ce}{\cf(*Y) } 
\]
and 
\[\Ps:
 \lid_k\hh{\co_{\g X}}{\ce^\an}{\cf^\an(k\g Y)}\aro \hh{\co_{\g X}}{\ce^\an}{\cf^\an(*\g Y) } 
\]
are bijective. Since the argument is sufficiently general, we give it only in the case of $\Ph$. 

 Even without assuming $\cf$ to be $Y$-pure, quasi-compacity of $X$ shows that $\Ph$ is injective. Surjectivity   requires  $Y$-purity since we need    the natural arrows $\cf(kY)\to \lid_\ell\cf(\ell Y)=\cf(* Y)$ to be injective in order to glue.

{\it Step 2.}
Using the natural bijections 
\[
\hh{\co_X}{\ce}{\cf(*Y)}=\hh{\co_X(*Y)}{\ce(*Y)}{\cf(*Y)}
\]
and 
\[
\hh{\co_{\g X}}{\ce^\an}{\cf^\an(*\g Y)}=\hh{\co_{\g X}(*\g Y)}{\ce^\an(*\g Y)}{\cf^\an(*\g Y)}, 
\] 
the Lemma shall be proved once we establish that the natural arrow 
\[
\hh{\co_X}{\ce}{\cf(*Y)}\aro\hh{\co_{\g X}}{\ce^\an}{\cf^\an(*\g Y)}
\]
is bijective. But this follows from GAGA  \cite[XIII, Theorem 4.4]{SGA1} and bijectivity of $\Ph$ and $\Ps$.  

\end{proof}

\begin{thm}[Deligne-Manin extensions]\label{22.10.2019--1}Let $(E,\na)$ be an  object of $\g C_Y^{\rm rs}(X^*/S)$. Then, there exists  $\wt E\in\g C_Y^{\log}(X/S)$ enjoying the two properties enumerated below and an isomorphism $\Ph:\wt E|_{X^*}\stackrel\sim\to E$. 
 
\begin{enumerate}[(1)]\item All the exponents of $\wt E^\an$ (this is an object of $\g C_{\g Y}^{\log}(\g X/\g S)$) along $\g Y$ lie in $\tau$.
\item   The $\co_{\g X}$-module $\wt E^\an$ is a relative vector bundles. 
\end{enumerate}
In addition, if $\wt F\in\g C_Y^{\log}(X/S)$ also enjoys (1) and (2) above and $\Ps:\wt F|_{X^*} \to E$ is an isomorphism, then  there exists an isomorphism $\Xi:\wt E\to \wt F$ such that $\Ph|_{X^*} = \Ps\circ\Xi|_{X^*}$.
\end{thm}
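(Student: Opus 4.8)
The plan is to transport the analytic Deligne--Manin extension (Theorem~\ref{deligne_manin_extension}) into the algebraic category by means of GAGA (Proposition~\ref{GAGA}) and the comparison of meromorphic $\mm{Hom}$-modules (Lemma~\ref{05.11.2019--2}). For existence, I would first use the very definition of $\g C_Y^{\rm rs}(X^*/S)$ together with Lemma~\ref{10.01.2020--1} to fix a $Y$-pure algebraic logarithmic model $\cf\in\g C_Y^{\log}(X/S)$ of $(E,\na)$, so that $E^\an$ is identified with $\cf^\an|_{\g X^*}$. I would then apply the analytic Deligne--Manin extension to $E^\an\in\mc(\g X^*/\g S)$---Hypothesis~\ref{14.10.2019--3} being in force because each $\g Y_c$ is connected and smooth, as recorded at the start of Section~\ref{31.01.2020--4}---to obtain an analytic logarithmic model $\ce\in\g C_{\g Y}^{\log}(\g X/\g S)$ that is a relative vector bundle with all exponents along $\g Y$ on $\tau$, plus an analytic horizontal isomorphism $\al:\ce|_{\g X^*}\arou\sim E^\an$. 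Proposition~\ref{GAGA} then provides $\wt E\in\g C_Y^{\log}(X/S)$ with $\wt E^\an\cong\ce$; properties (1) and (2) hold for $\wt E$ by construction, and, since $\wt E^\an\cong\ce$ is a relative vector bundle, $\wt E$ is $Y$-pure.

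The crux is to descend $\al$ to an \emph{algebraic} horizontal isomorphism $\Ph:\wt E|_{X^*}\arou\sim E$. Here I would invoke the fact, recorded at the start of Section~\ref{31.10.2019--1}, that algebraically $\g{MC}_Y(X/S)=\g C(X^*/S)$; thus a horizontal arrow $\wt E|_{X^*}\to E=\cf|_{X^*}$ is the same datum as a horizontal arrow of $\co_X(*Y)$-modules $\wt E(*Y)\to\cf(*Y)$, and likewise analytically. Since $\cf$ is $Y$-pure, Lemma~\ref{05.11.2019--2} yields a bijection
\[
\hh{\co_X(*Y)}{\wt E(*Y)}{\cf(*Y)}\arou\sim\hh{\co_{\g X}(*\g Y)}{\wt E^\an(*\g Y)}{\cf^\an(*\g Y)},
\]
which is compatible with composition and with the further $\co_X(*Y)$-linear horizontality condition that GAGA respects. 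Reading $\al$ as a horizontal meromorphic map therefore exhibits it as the analytification of a unique algebraic horizontal map $\Ph:\wt E|_{X^*}\to E$; applying the same descent to $\al^{-1}$ (legitimate because $\wt E$ is $Y$-pure as well) and using injectivity of the bijection, one sees that $\Ph$ is an isomorphism.

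For uniqueness, given a second model $\wt F$ satisfying (1) and (2) and an isomorphism $\Ps:\wt F|_{X^*}\arou\sim E$, I would analytify: $\wt F^\an$ then satisfies properties (i) and (ii) of Theorem~\ref{deligne_manin_extension} and $\Ps^\an:\wt F^\an|_{\g X^*}\arou\sim E^\an$ is an isomorphism. The uniqueness clause of Theorem~\ref{deligne_manin_extension}(2) supplies a unique analytic isomorphism $\xi:\wt E^\an\arou\sim\wt F^\an$ with $\al=\Ps^\an\circ\xi|_{\g X^*}$, which Proposition~\ref{GAGA} algebraizes to an isomorphism $\Xi:\wt E\arou\sim\wt F$. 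The required relation $\Ph|_{X^*}=\Ps\circ\Xi|_{X^*}$ then follows because both sides agree after analytification, and the injective part of Lemma~\ref{05.11.2019--2} renders the passage to analytifications faithful on these horizontal Hom-modules.

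The main obstacle is precisely this faithful, two-sided descent of morphisms, used once in the existence step and again in uniqueness: everything rests on the GAGA comparison of meromorphic $\mm{Hom}$-modules, on verifying that this comparison respects horizontality, and on keeping the relevant target models $Y$-pure so that Lemma~\ref{05.11.2019--2} remains applicable.
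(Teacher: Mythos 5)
Your overall strategy is the paper's own: fix a $Y$-pure algebraic logarithmic model, apply the analytic Deligne--Manin extension (Theorem \ref{deligne_manin_extension}) to the analytification, algebraize the result via Proposition \ref{GAGA}, and descend the comparison isomorphism through Lemma \ref{05.11.2019--2}; the uniqueness clause is handled identically (Theorem \ref{deligne_manin_extension}-(2) plus GAGA). There is, however, one genuine gap in the existence step. You assert that a horizontal arrow $\wt E|_{X^*}\to\cf|_{X^*}$ is the same datum as a horizontal arrow of meromorphic modules $\wt E(*Y)\to\cf(*Y)$ ``and likewise analytically.'' The algebraic half is correct, since $u:X^*\to X$ is affine and hence $\coh{\co_X(*Y)}=\coh{\co_{X^*}}$; the analytic half is false. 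The restriction functor $\g {MC}_{\g Y}(\g X/\g S)\to\g C(\g X^*/\g S)$ is faithful but nowhere near an equivalence, so the analytic isomorphism $\al:\wt E^\an|_{\g X^*}\arou\sim\cf^\an|_{\g X^*}$ does not automatically ``read as'' a horizontal arrow $\wt E^\an(*\g Y)\to\cf^\an(*\g Y)$: it must be \emph{extended}, and that extension is exactly what Theorem \ref{21.10.2019--1} (full faithfulness of $\g{MC}^{\log}_{\g Y}(\g X/\g S)\to\g C(\g X^*/\g S)$) provides, applicable here because both sides have at worst logarithmic poles. The paper invokes that theorem at precisely this point before feeding the resulting meromorphic arrow to Lemma \ref{05.11.2019--2}; with that citation inserted your argument closes. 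A minor remark: your two-sided descent of $\al$ and $\al^{-1}$ can be replaced by the paper's shorter route, namely observing that the restriction of the descended arrow to $X^*$ analytifies to the isomorphism $\al$ and that the analytification functor on coherent sheaves is conservative.
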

\begin{proof}Let $\cm\in\g C^{\log}_Y(X/S)$ be a logarithmic model of $E$; we may assume that $\cm$ is $Y$-pure because of Lemma \ref{10.01.2020--1}. Applying  Theorem \ref{deligne_manin_extension}, there exists   $\wt{\g E}\in\g C_{\g Y}^{\log }(\g X/\g S)$ 
enjoying properties $(i)$ and $(ii)$ of the said result, and a horizontal isomorphism 
\[\ph:
\wt{\g E}|_{\g X^*}\arou\sim E^\an. 
\]
Using Theorem \ref{21.10.2019--1}, it is possible to find an arrow 
\[\wt\ph:\wt{\g E}(*\g Y)\aro \cm^\an(*\g Y)  \]
in $\g {MC}_{\g Y}(\g X/\g S)$ extending $\ph$. By GAGA, there exists $\wt E\in\g C^{\rm log}_{Y}(X/S)$ whose analytification is $\wt{\g E}$. By  Lemma \ref{05.11.2019--2}, there exists a   morphism of $\co_X(*Y)$-modules
\[
\wt{\ph}^{\rm alg}:  \wt{E}(*Y)\aro \cm(*Y)
\]
such that  $\left(\wt{\ph}^{\rm alg}\right)^\an=\wt\ph$. 
 In particular, $\left(\wt\ph^{\rm alg}|_{X^*}\right)^\an$ is just $\ph$ and hence is a horizontal isomorphism. Since the analytification functor $\coh{\co_{X^*}}\to\coh{\co_{\g X^*}}$ is conservative   \cite[XII.1.3.1, p.241]{SGA1} (it is exact and faithful),   it follows that $\wt\ph^{\rm alg}|_{X^*}$ is a horizontal isomorphism and $\wt E$ is a logarithmic model for $E$.

The last statement is a direct consequence of claim (2) in Theorem  \ref{deligne_manin_extension} and GAGA. 
\end{proof}

\subsection{Logarithmic models   in the case of a   complete base }\label{31.01.2020--1}
Let   $R$ be a complete noetherian local $\CC$-algebra with residue field $\CC$. If $\g r\subset R$ stands for the maximal ideal, we shall write $R_k$ instead of $R/\g r^{k+1}$: these are all finite dimensional complex vector spaces. 

We now  place ourselves in the situation explained in the beginning of this section and {\it add   other    assumptions}: 
\begin{itemize}\item The scheme  $S$ is $\spc R$,
\item    the morphism $f:X\to S$ is {\it proper with connected fibres},
\item the divisor $Y$ is $\sum_cY_c$,  where for each $c$, the $S$-scheme $Y_c$ is smooth,  and
\item  the $\CC$-scheme   $Y_c\ot R_0$ is   connected. (Recall that in this case, $(Y_c\ot R_0)^\an$ is likewise connected \cite[XII, Proposition 2.4, p.243]{SGA1}.)
\end{itemize}
The reader is directed to Remark \ref{temkin} below to have an idea of how the setting imposed by the above hypothesis comes about for a given $X^*$ over $S$.

To simplify notation,  given $k\in\NN$, we write 
\[S_k=\spc{R_k},\quad X_k=X\ot_RR_k, \quad Y_k=Y\ot_RR_k,\quad\text{etc}.
\] 
These produce, via the analytification functor, complex   spaces $\g S_k$, $\g X_k$,    $\g Y_k$, etc.

We wish to compare $\g C_Y^{\rm rs}(X/S)$ and the category of representations on $R$-modules of the fundamental group of $\g X_0$ in analogy with \cite{hai-dos_santos19} and \cite[7.2.1, p.170]{malgrange} (or \cite[Theorem II.5.9, p.97]{deligne70}). 
We begin with the analogue of \cite[Definition 5.2]{hai-dos_santos19}. 

\begin{dfn}
\label{dfn_28.07.2021--1}The category $\g C_Y^{\rm log}(X/S)^\wedge$ is the category whose 
\begin{enumerate}\item[objects] are sequences $(\ce_k,q_k)$, with   $\ce_k\in\g C_{Y_k}^{\log}(X_k/S_k)$  and $q_k:\ce_{k+1}|_{X_{k+1}}\to \ce|_{X_k}$  an isomorphism in $\g C_{Y_k}^{\log}(X_k/S_k)$, and 
\item[arrows] between $(\ce_k,q_k)$ and $(\cf_k,r_k)$ are compatible families of arrows $\ph_k:\ce_k\to\cf_k$. 
\end{enumerate}
We introduce analogously the categories $\g C(X^*/S)^\wedge$, $\g C(\g X/\g S)^\wedge$ and $\g C(\g X^*/\g S)^\wedge$ (and note that here $\g X$, $\g X^*$ and $\g S$ carry no mathematical meaning). 
\end{dfn}

We begin with a   consequence of Grothendieck's algebraization theorem \cite{illusie05}. Its proof is simple and omitted;  the unconvinced reader might find further details in  \cite[Proposition 5.3]{hai-dos_santos19}.   
\begin{prp}[The GFGA equivalence]\label{GFGA}The natural morphism 
\[
\g C_Y^{\log}(X/S)\aro \g  C_{  Y}^{\log}(  X/  S)^\wedge
\] 
is an equivalence. \qed
\end{prp}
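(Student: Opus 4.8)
The plan is to reduce the statement to Grothendieck's existence theorem for coherent modules (``formal GAGA'', \cite{illusie05}). Under the standing hypotheses---$f:X\to S=\spc R$ proper and $R$ a complete local noetherian $\CC$-algebra with residue field $\CC$---that theorem asserts that $\ce\mapsto(\ce|_{X_k})_k$ is an equivalence between $\coh{\co_X}$ and the category of compatible systems $(\ce_k,q_k)$ of coherent modules on the thickenings $X_k=X\ot_RR_k$; in particular every compatible system algebraizes and, for coherent $\ce,\cf$ on $X$, the natural map
\[
\hh{\co_X}{\ce}{\cf}\aro\lip_k\hh{\co_{X_k}}{\ce|_{X_k}}{\cf|_{X_k}}
\]
is bijective. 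The only additional datum carried by objects of $\g C_Y^{\log}(X/S)$ is the connection, and to transport it I would first record a base-change compatibility. Because $f$ is smooth and $Y$ is a relative divisor with normal crossings, the sheaf $\Om^1_{X/S}(\log Y)$ is locally free and commutes with base change; the same holds for the sheaf $\cp$ of first-order principal parts with logarithmic poles along $Y$, which fits into the locally split exact sequence
\[
0\aro\Om^1_{X/S}(\log Y)\aro\cp\aro\co_X\aro 0.
\]
In particular $\cp|_{X_k}$ is canonically the analogous sheaf on $X_k$.

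The step I expect to require the most care is that a logarithmic connection $\na$ is merely $\co_S$-linear, whereas the existence theorem speaks about $\co_X$-linear maps. I would circumvent this by encoding a connection as $\co_X$-linear datum: a logarithmic connection on a coherent $\ce$ is the same as an $\co_X$-linear section of the augmentation $\ce\ot_{\co_X}\cp\to\ce$ obtained by tensoring the sequence above with $\ce$, the associated $\na$ being recovered from the canonical $\co_S$-linear lift $\ce\to\ce\ot_{\co_X}\cp$ (so that the Leibniz rule is automatic). Integrability is then the vanishing of the $\co_X$-linear curvature morphism $\ce\to\ce\ot_{\co_X}\Om^2_{X/S}(\log Y)$. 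In this formulation both the connection and the conditions it must satisfy are expressed through morphisms of coherent $\co_X$-modules, which is precisely what the existence theorem controls.

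With this in hand the two halves of the equivalence become routine. For full faithfulness, a morphism in $\g C_Y^{\log}(X/S)$ is a horizontal $\co_X$-linear map; by the displayed bijection it corresponds to a unique compatible family of $\co_X$-linear maps, and horizontality is an $\co_X$-linear identity of maps between coherent sheaves, hence holds for the lift exactly because it holds at every finite level. For essential surjectivity, starting from a system $(\ce_k,\na_k,q_k)$ I would algebraize the underlying modules to obtain $\ce\in\coh{\co_X}$ with $\ce|_{X_k}\simeq\ce_k$, then read the sections $\na_k$---through the base-change identifications for $\cp$---as a compatible family, which algebraizes to a single $\co_X$-linear section on $\ce$. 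That this section is genuinely a section of the augmentation, and that the resulting connection is integrable, are $\co_X$-linear equalities between coherent sheaves vanishing on each $X_k$, hence true by the faithfulness part of the existence theorem. The algebraized object then lies in $\g C_Y^{\log}(X/S)$ and maps to the prescribed system of $\g C_Y^{\log}(X/S)^\wedge$, completing the proof; full details in this spirit appear in \cite[Proposition 5.3]{hai-dos_santos19}.
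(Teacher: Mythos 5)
The paper omits the proof of this proposition, indicating only that it is a consequence of Grothendieck's algebraization theorem and pointing to \cite[Proposition 5.3]{hai-dos_santos19}; your argument fills in exactly that intended route, reducing to the existence theorem for coherent sheaves and handling the $\co_S$-linearity of $\na$ by re-encoding the connection as $\co_X$-linear data (a section of the logarithmic principal-parts sequence), which is the standard and correct device. Your proposal is correct and takes essentially the same approach the paper relies on.
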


We can now find certain preferred logarithmic models for objects in $\g C^{\rm rs}_Y(X^*/S)$.

\begin{thm}\label{01.10.2019--1}Any $E\in \g C_Y^{\rm rs} (X^*/S)$ possesses a logarithmic model $\ce$   enjoying the following properties. 
\begin{enumerate}[(1)]
\item For each $k\in\NN$, the analytification $(\ce|_{X_k})^\an$, which is an object of $\g C^{\log}_{\g Y_k}(\g X_k/\g S_k)$,  has all its exponents on $\tau$. 
\item For each $k\in\NN$, the coherent sheaf  $(\ce|_{X_k})^\an$ is a vector  bundle relatively to $\g S_k$.  
\end{enumerate} 
\end{thm}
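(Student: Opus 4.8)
The plan is to build $\ce$ one truncation at a time and then to algebraise. For every $k\in\NN$ the restriction of $E$ to $X_k^*:=X_k\smallsetminus Y_k$ is regular-singular over $S_k=\spc R_k$ (a logarithmic model is obtained by restricting to $X_k$ any logarithmic model of $E$, which exists by Lemma \ref{10.01.2020--1}), and $R_k$ is finite dimensional over $\CC$. Hence Theorem \ref{22.10.2019--1} supplies a distinguished logarithmic model $\ce_k\in\g C_{Y_k}^{\log}(X_k/S_k)$ together with a horizontal isomorphism $\ph_k\colon\ce_k|_{X_k^*}\arou\sim E|_{X_k^*}$ such that $\ce_k^\an$ has all its exponents on $\tau$ and is a relative vector bundle. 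These are exactly the desired properties (1) and (2), so the genuine work is to glue the $\ce_k$ into a single object over $S$ and to check that its restriction to $X^*$ recovers $E$.

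To produce the gluing data I would observe that $\ce_{k+1}|_{X_k}$ is again a distinguished model of $E|_{X_k^*}$: it remains a relative vector bundle (a base change of a relative vector bundle is one), and its exponents are unchanged. This last point is the recurrent mechanism of the whole argument. By Definition \ref{29.10.2019--3} an exponent is the spectrum of a residue acting on a fibre $\ce(p)$, a complex vector space attached to a point $p$ of the common underlying space of the $\g X_k$, and by Lemma \ref{10.10.2019--1} this spectrum is insensitive to the nilpotent thickening of the base. Consequently the uniqueness clause of Theorem \ref{22.10.2019--1} furnishes canonical isomorphisms $q_k\colon\ce_{k+1}|_{X_k}\arou\sim\ce_k$ compatible with the $\ph_k$, so that $(\ce_k,q_k)$ is an object of $\g C_Y^{\log}(X/S)^\wedge$. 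The GFGA equivalence (Proposition \ref{GFGA}) then yields a single $\ce\in\g C_Y^{\log}(X/S)$ with $\ce|_{X_k}\simeq\ce_k$ for every $k$, and properties (1) and (2) for $\ce$ are inherited from those of the $\ce_k$.

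It remains to produce a horizontal isomorphism $\ce|_{X^*}\arou\sim E$, and this is the delicate part, because $X^*$ is not proper and so isomorphisms of sheaves on it cannot be detected by reduction modulo the powers $\g r^{k+1}$. The device that circumvents this is the boundedness of poles. Fixing a logarithmic model $\cm$ of $E$, the $\ph_k$ give horizontal isomorphisms between $\ce_k|_{X_k^*}$ and $\cm|_{X_k^*}$; since the preferred models $\ce_k$ are relative vector bundles, Corollary \ref{08.10.2019--2} (transported to the proper algebraic setting through GAGA, exactly as in the proof of Theorem \ref{22.10.2019--1}) extends these isomorphisms and their inverses across $Y_k$ with a pole order governed by the dissonance $\de$ of Definition \ref{dissonance}. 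The crucial fact, already used above, is that $\de$ is assembled from exponents and is therefore \emph{independent of $k$}.

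With a single $\de$ valid for every $k$, the relevant morphism spaces may be computed on the proper scheme $X$. By Theorem \ref{21.10.2019--1} together with the bounded-poles extension, horizontal homomorphisms between the regular-singular objects $\ce|_{X^*}$ and $E$ are identified with horizontal homomorphisms of logarithmic connections with poles of the fixed order $\de$ on $X$, and the latter form a finite $R$-module which, $X$ being proper, is $\g r$-adically complete and equals $\lip_k$ of its truncations (Proposition \ref{GFGA} together with Lemma \ref{05.11.2019--2}). Hence the compatible families $(\ph_k)$ and $(\ph_k^{-1})$ lift to genuine horizontal arrows on $X^*$ in both directions, and these are mutually inverse because they are so at each truncation, where the comparison now takes place between finite $R$-modules. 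This produces the required isomorphism and finishes the proof. The main obstacle throughout is precisely this passage from the level-by-level isomorphism to an honest one over $S$; the non-properness of $X^*$ is overcome by the observation emphasised in the introduction, namely that the order of the poles is independent of the truncation, which moves the entire computation onto the proper $X$ where Grothendieck's algebraisation theorem applies.
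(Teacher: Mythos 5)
Your construction of the tower $\{\ce_k\}$ via Theorem \ref{22.10.2019--1}, the gluing isomorphisms $q_k$ obtained from its uniqueness clause (using that exponents and the vector-bundle property survive restriction to $X_k$), and the algebraization by Proposition \ref{GFGA} all coincide with the paper's argument. The gap is in the last step, where you compare $\ce|_{X^*}$ with $E$. Corollary \ref{08.10.2019--2} and Proposition \ref{20.11.2019--1} bound the pole order of an extension by the dissonance only when the \emph{target} is a relative vector bundle. You invoke them to extend ``the isomorphisms and their inverses,'' but only the direction $\cm|_{X_k^*}\to\ce_k|_{X_k^*}$, whose target is the preferred model, is covered; the opposite direction has target the arbitrary coherent model $\cm|_{X_k}$, and for it the paper provides no bound on the pole order that is uniform in $k$ (Theorem \ref{21.10.2019--1} gives a meromorphic extension, hence by properness a pole of some finite order $N_k$, but nothing controls $N_k$ as $k$ grows). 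Consequently your identification of the horizontal homomorphisms $\ce|_{X^*}\to E$ with a single finite $R$-module of arrows with poles of fixed order, and with it the construction of a two-sided inverse by passing to the limit, both break down.

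The paper circumvents this by extending in the one permissible direction only: it produces, truncation by truncation, arrows $\g h_k:(\cm|_{X_k})^\an\to(\wt E|_{X_k})^\an(\de\g Y_k)$ restricting to the identity on $\g X_k^*$, algebraizes them by GAGA and GFGA into a single $h:\cm\to\wt E(\de Y)$, and then argues that $h|_{X^*}$ is an isomorphism by a mechanism you do not use: since $h|_{X^*}$ is an isomorphism modulo every power of $\g r$, it is an isomorphism on an open neighbourhood of the closed fibre $X_0^*$ (\ega{I}{}{10.8.14, p.198}), and Lemma \ref{24.10.2019--1} --- which exploits horizontality, local freeness along fibres and connectedness of the fibres to show that the locus where a horizontal arrow is an isomorphism is a union of fibres whose image in the base is open --- propagates this to all of $X^*$. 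This one-sided argument is exactly what replaces the missing inverse; without it, or without a uniform pole bound for arrows into an arbitrary coherent model, your final step does not close.
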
 
\begin{proof}We shall write $E_k$ instead of $E|_{X^*_k}$ and let 
 $q_k$ denote   the canonical isomorphism $E_{k+1}|_{X_k^*}\to E_k$. 
Let 
\[
\wt E_k\in\g C_{Y_k}^{\log}(X_k/S_k)
\] 
be the logarithmic model of $E_k$ obtained from an application of Theorem \ref{22.10.2019--1}: $\wt E_k^\an$ has all its exponents on $\tau$, is a vector bundle relatively to $\g S_k$ and there exists an isomorphism $\ph_k:\wt E_k|_{X_k^*}\to E_k$. 

\vspace{.3cm}
\noindent{\it Claim.} For each $k\in\NN$, $q_k$ 
extends to an isomorphism in $\g C^{\log}_{Y_k}(X_k/S_k)$:  
\[
\xymatrix{\wt E_{k+1}|_{X_{k}}\ar[rr]^-{\wt q_k}&& \wt E_{k}.} 
\] 

\noindent{\it Proof.}
Let us consider the morphisms $\g q_k$ and $\g u_k$ defined by the commutative diagrams 
\[
\xymatrix{
\left\{\left.\left(\wt E_{k+1}|_{X_k}\right)\right|_{X_k^*}\right\}^\an \ar[rr]^-{\g q_k}\ar[rr]\ar@{=}[d]
&&
\left\{ \wt E_k|_{X_k^*}\right\}^\an\ar[dd]^\sim
\\
\left\{\left.\left(\wt E_{k+1}|_{X_{k+1}^*}\right)\right|_{X_k^*}\right\}^\an\ar[d]_\sim&& 
\\
\left(E_{k+1}|_{X_k^*}\right)^\an\ar[rr]_{(q_k)^\an}  && E_k^\an   
}
\]
and   
\[
\xymatrix{
\left\{\left.\left(\wt E_{k+1}|_{X_k}\right)\right|_{X_k^*}\right\}^\an \ar[rr]^-{\g q_k}\ar[rr]\ar@{=}[d]
&&
\left\{ \wt E_k|_{X_k^*}\right\}^\an \ar@{=}[d]
\\
\left.(\wt E_{k+1}|_{X_k})^\an\right|_{\g X_k^*} \ar[rr]_-{\g u_k} &&\wt E_k^\an|_{\g X_k^*}.
}
\]
(Note that the first diagram is the image of a diagram from $\g C(X_k^*/S_k)$ by the analytification functor.)
 The fact that   $(\wt E_{k+1}|_{X_k})^\an\simeq\wt E_{k+1}^\an|_{\g X_k}$ and $\wt E_k^\an$ have exponents on $\tau$ and are vector bundles relatively to $\g S_k$ allows us to   apply Theorem \ref{deligne_manin_extension}-(2) and assure that $\g u_k=\wt{ \g u}_k|_{\g X_k^*}$ for a certain isomorphism $\wt {\g u}_k: (\wt E_{k+1}|_{X_k})^\an\to\wt E_k^\an$.
By GAGA (Proposition \ref{GAGA}), $\wt{\g u}_k=(\wt q_k)^\an$ for  some isomorphism $\wt q_k:\wt E_{k+1}|_{X_k}\to \wt E_k$.
By functoriality, we see that $\g q_k=(\wt q_k|_{ X_k^*})^\an$. Since $(-)^\an:\coh{\co_{X_k^*}}\to\coh{\co_{\g X_k^*}}$ is faithful \cite[XII.1.3.1, p.241]{SGA1}, $\wt q_k$ is the desired extension of $q_k$, which completes the proof of the claim.

Using the GFGA equivalence (Proposition \ref{GFGA}) and the family $\{\wt q_k\}$,  we can find   $\wt E\in\g C_Y^{\log}(X/S)$ such that 
\[
\wt E|_{X_k}=\wt E_k,\quad\text{for each $k$.}
\] 

\vspace{.3cm}
\noindent{\it Claim.}  The logarithmic connection $\wt E$ is a model of $E$. 

\noindent{\it Proof.}
This is  not automatic since the natural arrow $\g C(X^*/S) \aro \g C(X^*/S)^\wedge$ does not have to be full.  
Let $\cm\in\g C^{\log}_{Y}(X/S)$ be any logarithmic model of $E$ so that $(\cm|_{X_k})^\an$ is a logarithmic model of $(E|_{X_k})^\an$. Hence, by  Theorem \ref{21.10.2019--1}, there exists a {\it unique} isomorphism 
\[
\xymatrix{ (\cm|_{X_k})^\an(*\g Y_k)\ar[rr]^-{\g g_k}_-{\sim}&& ( \wt E|_{X_k})^\an(*\g Y_k)}
\]
 in $\g {MC}^{\rm log}_{\g Y_k}(\g X_k/\g S_k)$  whose restriction to   $\g X_k^*$ is the identity of $(E|_{X_k})^\an$. 
 Let $\de$ be the dissonance from $(\wt E|_{X_k})^\an$ to $(\cm|_{X_k})^\an$  for an arbitrary $k$. 
By Proposition \ref{20.11.2019--1}, there exists  an unique arrow in $\g C_{\g Y_k}^{\log}(\g X_k/\g S_k)$
\[
\xymatrix{ (\cm|_{X_{k}})^\an \ar[rr]^-{\g h_k} && (\wt E|_{X_k})^\an(\de\g Y_k)}
\]
 extending   $\g g_k$. 
In particular,  $\g h_k|_{\g X_k^*}=\id_{(E|_{X_k})^\an}$, and uniqueness assures that $\g h_{k+1}|_{\g X_k}=\g h_k$.

Let 
\[
\xymatrix{ \cm|_{X_k}\ar[rr]^{h_k} && \wt E(\de Y)|_{X_k}
}
\]
be an arrow of $\g C_{Y_k}^{\log}(X_k/S_k)$
inducing $\g h_k$ after analytification. Since $\g h_{k+1}|_{\g X_k}=\g h_k$, we conclude also that $h_{k+1}|_{X_k}=h_k$. Hence, by GFGA, it is possible to find a morphism   in $\g C_Y^{\log}(X/S)$, 
\[
\xymatrix{
\cm\ar[rr]^{h} && \wt E(\de Y),
}
\]
such that $h|_{X_k}=h_k$ for each $k$. In particular, $(h|_{X^*})|_{X_k^*}$ is an isomorphism for each $k$, which shows that $h|_{X^*}: \cm|_{X^*}\to \wt E|_{X^*}$ is an isomorphism  on an open neighbourhood of the closed fibre   $X_0^*$ \ega{I}{}{10.8.14, p.198}.  From Lemma \ref{24.10.2019--1}, we deduce that $h|_{X^*}$ is an isomorphism so that $\wt E(\de Y)$, and hence $\wt E$, is a logarithmic model of $E$. 
\end{proof}

The following result was employed in proving Theorem \ref{01.10.2019--1} and shall be useful also in the verification of Theorem \ref{21.11.2019--1}. 

\begin{lem}\label{24.10.2019--1}Let $T$ be the spectrum of a  local and noetherian $\CC$-algebra with closed point $o$.  
Let $g:Z\to T$ be a smooth morphism with connected fibres. 
 \begin{enumerate}[(1)]\item
 Let $\cf$ an object of $\g C(Z/T)$ such that  
 $\cf_p=0$ for each   $p\in Z_o$.  Then $\cf=0$.
 \item Let $\ph: \ce\to\ce'$ be an arrow of $\g C(Z/T)$ which is an isomorphism, respectively vanishes,  on an open neighbourhood of $Z_o$. Then $\ph$ is an isomorphism, respectively vanishes allover. 
  \end{enumerate}  
\end{lem}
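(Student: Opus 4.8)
The plan is to deduce (2) from (1) by the usual abelian manipulations, and to prove (1) by restricting $\cf$ to the fibres of $g$ and exploiting the rigidity of coherent modules carrying an integrable connection in characteristic zero. First I would record that $\g C(Z/T)$ behaves well with respect to kernels and cokernels: if $\ph:\ce\to\ce'$ is horizontal, then $\mm{Ker}\,\ph$, $\mm{Im}\,\ph$ and $\mm{Coker}\,\ph$ are coherent $\co_Z$-modules stable under $\na$ (horizontality gives $\na(\mm{Ker}\,\ph)\subset\mm{Ker}\,\ph\ot\Om^1_{Z/T}$, and similarly for the image), hence are again objects of $\g C(Z/T)$. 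Since $\ph$ is an isomorphism exactly when $\mm{Ker}\,\ph=0$ and $\mm{Coker}\,\ph=0$, and vanishes exactly when $\mm{Im}\,\ph=0$, both claims in (2) reduce to (1): if $\ph$ is an isomorphism (resp.\ is zero) on an open $U\supset Z_o$, then $\mm{Ker}\,\ph$ and $\mm{Coker}\,\ph$ (resp.\ $\mm{Im}\,\ph$) have vanishing stalks at every point of $Z_o$, and (1) forces them to be zero.

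For (1), the key observation I would establish is that the support of $\cf$ is a union of entire fibres. For each $t\in T$ the base change $\cf|_{Z_t}=\cf\ot_{\co_Z}\co_{Z_t}$ carries an induced integrable connection relative to $\ka(t)$, via the identification $\Om^1_{Z/T}|_{Z_t}\simeq\Om^1_{Z_t/\ka(t)}$, and $Z_t$ is smooth over the characteristic-zero field $\ka(t)$. It is classical that a coherent module with integrable connection over such a base is locally free (see \cite{deligne70}, \cite{katz70}); as $Z_t$ is connected its rank $r(t)$ is then constant along the fibre. Consequently $\cf|_{Z_t}$ is either everywhere nonzero or identically zero on $Z_t$, so that $\mm{Supp}\,\cf=g^{-1}(W)$ with $W=\{t\in T\,:\,r(t)>0\}$.

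It then remains to show that $W$ is empty. Here I would use that $g$, being smooth, is open and (its fibres being nonempty and connected) surjective; from the closed set $\mm{Supp}\,\cf=g^{-1}(W)$ one gets $T\smallsetminus W=g(Z\smallsetminus\mm{Supp}\,\cf)$, which is open, so that $W$ is closed in $T$. The hypothesis $\cf_p=0$ for all $p\in Z_o$ says precisely that $\cf|_{Z_o}=0$, i.e.\ $r(o)=0$, whence $o\notin W$. But the unique closed point $o$ of the local scheme $T$ lies in every nonempty closed subset, so $W=\varnothing$ and therefore $\cf=0$.

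The hard part will be the passage from "support is closed" to "support is a union of whole fibres", which is exactly where the fibrewise local-freeness theorem is needed; note that this must be invoked over the residue fields $\ka(t)$, which may be strictly larger than $\CC$, although they remain of characteristic zero. The remaining ingredients---connectedness of the fibres, openness and surjectivity of the smooth $g$, and the topological fact that the closed point of a local scheme sits in every nonempty closed subset---are then routine and combine to give the contradiction.
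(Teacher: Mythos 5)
Your proposal is correct and follows essentially the same route as the paper: reduce (2) to (1) via kernel, cokernel and image, and prove (1) by combining fibrewise local freeness of modules with integrable connection in characteristic zero (\cite[Proposition 8.8]{katz70}) with connectedness of the fibres to see that the support is a union of fibres, then use openness of the smooth morphism $g$ and the fact that the closed point of a local scheme detects nonempty closed subsets. The only cosmetic difference is that the paper phrases the argument in terms of the complement of the support (showing its image is open and contains $o$) rather than in terms of your closed set $W$.
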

\begin{proof}(1) We need to show that the open subset $\mm{Supp}(\cf)^{\rm c}$ is in fact $Z$ and start by noting    that $\mm{Supp}(\cf)^{\rm c}$ has the following distinctive property. If $p\in\mm{Supp}(\cf)^{\rm c}$, then the fibre $g^{-1}(g(p))$ is also contained in $\mm{Supp}(\cf)^{\rm c}$. Indeed, if $\cf_p=0$, then $\cf|_{g^{-1}(g(p))}(p)=0$ and, since $\cf|_{g^{-1}(g(p))}$ is locally free \cite[Proposition 8.8,p.206]{katz70}, we conclude that $\cf|_{g^{-1}(g(p))}=0$. Consequently, if   $q\in g^{-1}(g(p))$, then   $\cf(q)=0$ and hence, by Nakayama's Lemma,  $\cf_q=0$. 
Once this has been put under the light, it follows easily that $\mm{Supp}(\cf)^{\rm c}=Z$ because $g(\mm{Supp}(\cf)^{\rm c})$ is  open in $T$ \ega{IV}{2}{2.4.6, p.20} and contains the closed point.

(2) Follows from (1) applied to      $\mm{Ker}(\ph)$ and $\mm{Coker}(\ph)$, respectively to $\mm{Im}(\ph)$. 
\end{proof}

\begin{rmk}\label{temkin}Let $Z^*$ be a smooth and separated scheme over  $S=\spc R$. Following \cite[Theorem 3.2]{lutkebohmert}, it is always possible to find a proper scheme $Z$ and a schematically dense open immersion $Z^*\to Z$, that is, a compactification of $Z^*$. The question is then when one can assume that $Z$ is in addition smooth over $S$ and the boundary $D:=(Z\smallsetminus Z^*)_{\rm red}$ is a divisor with relative normal crossings. This is a much harder question, to which we have no complete answer.    If $R$ is regular, \cite[Theorem 1.1]{temkin} tells us that $Z$ above can be chosen to be  regular as well. In addition, the same theorem assures  that it is possible to find a situation where $D$ has normal crossings (not relative ones). The case in which one looks for smooth morphisms has been recently studied in \cite{atw}, but to communicate their main results would take us   far afield without providing a straightforward answer to the reader. 
\end{rmk}

\subsection{The relative Deligne-Riemann-Hilbert correspondence}
\label{31.01.2020--2} 
Assumptions and notations are the same as on Section \ref{31.01.2020--1}.

\begin{thm}\label{21.11.2019--1}
The natural  functor \[\nu:\g C_Y^{\rm rs}(X^*/S)\aro \g C(\g X^*/\g S)^\wedge\] is an equivalence.  
\end{thm}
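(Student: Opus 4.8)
The plan is to show that $\nu$ is fully faithful and essentially surjective, in each case descending from the analytic truncations $\g X_k^*/\g S_k$ to the algebraic picture over $S=\spc R$ by combining the analytic Deligne--Manin extension (Theorem~\ref{deligne_manin_extension}), GAGA at each level (Proposition~\ref{GAGA}), and GFGA (Proposition~\ref{GFGA}). The engine throughout is the \emph{preferred model} of Theorem~\ref{01.10.2019--1}: to $E\in\g C_Y^{\rm rs}(X^*/S)$ I attach a logarithmic model $\ce\in\g C_Y^{\log}(X/S)$ such that, for every $k$, the sheaf $(\ce|_{X_k})^\an$ is a relative vector bundle over $\g S_k$ whose exponents along $\g Y_k$ all lie in $\tau$. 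The arithmetical remark that makes this normalisation effective is that if two such models $\ce,\cf$ have exponents in $\tau$, then the dissonance $\de(\cf,\ce)$ of Definition~\ref{dissonance} vanishes: a natural number of the form $\be-\al$ with $\be,\al\in\tau$ forces $\be\equiv\al\pmod{\ZZ}$, hence $\be=\al$ by the choice of $\tau$, so $\be-\al=0$ and $\de=0$. This is exactly what permits extending horizontal arrows into $\g C^{\log}$ \emph{without any twist}.

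\textbf{Essential surjectivity.} Let $(\ce_k,q_k)$ be an object of $\g C(\g X^*/\g S)^\wedge$. For each $k$, Theorem~\ref{deligne_manin_extension}(1) supplies $\wt\ce_k\in\g C_{\g Y_k}^{\log}(\g X_k/\g S_k)$ which is a relative vector bundle with exponents in $\tau$, together with an isomorphism $\ph_k:\wt\ce_k|_{\g X_k^*}\arou\sim\ce_k$. The restriction $\wt\ce_{k+1}|_{\g X_k}$ is again a relative vector bundle over $\g S_k$ whose exponents still lie in $\tau$ (exponents are computed on the closed fibre and are insensitive to the thickening, cf. Lemma~\ref{10.10.2019--1}), and through $\ph_{k+1}$, $q_k$, $\ph_k$ it restricts isomorphically to $\wt\ce_k|_{\g X_k^*}$; hence the uniqueness clause Theorem~\ref{deligne_manin_extension}(2) yields a unique isomorphism $\wt q_k:\wt\ce_{k+1}|_{\g X_k}\arou\sim\wt\ce_k$ in $\g C_{\g Y_k}^{\log}(\g X_k/\g S_k)$ extending it. Thus $(\wt\ce_k,\wt q_k)$ is an object of $\g C_{\g Y}^{\log}(\g X/\g S)^\wedge$; applying GAGA (Proposition~\ref{GAGA}) at each level $k$ and then GFGA (Proposition~\ref{GFGA}), I obtain $\ce\in\g C_Y^{\log}(X/S)$ with $(\ce|_{X_k})^\an\simeq\wt\ce_k$ compatibly with the transitions. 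Setting $E:=\ce|_{X^*}\in\g C_Y^{\rm rs}(X^*/S)$, the $\ph_k$ assemble into an isomorphism $\nu(E)\simeq(\ce_k,q_k)$.

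\textbf{Full faithfulness.} Fix $E,F\in\g C_Y^{\rm rs}(X^*/S)$ with preferred models $\ce,\cf$ as above and write $E_k=E|_{X_k^*}$. For \emph{fullness}, a morphism $\Theta:\nu E\to\nu F$ is a compatible family of horizontal arrows $\ps_k:(E_k)^\an\to(F_k)^\an$ in $\g C(\g X_k^*/\g S_k)$. As $(\cf|_{X_k})^\an$ is a relative vector bundle and the dissonance is $0$, Proposition~\ref{20.11.2019--1} extends each $\ps_k$ to a unique arrow $\wt\ps_k:(\ce|_{X_k})^\an\to(\cf|_{X_k})^\an$ in $\g C_{\g Y_k}^{\log}(\g X_k/\g S_k)$; uniqueness forces $\wt\ps_{k+1}|_{\g X_k}=\wt\ps_k$. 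By GAGA these descend to a compatible family $\ce|_{X_k}\to\cf|_{X_k}$, that is, to a morphism in $\g C_Y^{\log}(X/S)^\wedge$, which by GFGA comes from a single $\Ps:\ce\to\cf$. Then, using the identifications $\ce|_{X^*}\simeq E$ and $\cf|_{X^*}\simeq F$, the arrow $\ps:=\Ps|_{X^*}$ satisfies $\nu(\ps)=\Theta$. For \emph{faithfulness}, suppose $\ps:E\to F$ has $\nu(\ps)=0$; since analytification is faithful, $\ps|_{X_k^*}=0$ for all $k$, so for each $p$ of the special fibre $X_0^*$ the germ $\ps_p$ lies in $\bigcap_k\g r^{k+1}\ch\!om(E,F)_p$, which is $0$ by Krull's intersection theorem ($p$ lies over the closed point, so $\g r$ maps into $\g M_p$). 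Hence $\ps$ vanishes on an open neighbourhood of $X_0^*$, and Lemma~\ref{24.10.2019--1}(2) gives $\ps=0$.

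The delicate point — and the step I expect to demand the most care — is the \emph{coherence across truncation levels}. At each finite level the analytic theory of Section~\ref{04.11.2019--4} applies, but to glue the levels into a genuine algebraic object I must know that the extensions, both of connections in essential surjectivity and of arrows in fullness, are \emph{canonical}, so that the prescribed transition data $q_k$ (resp. the compatibility of the $\ps_k$) propagate verbatim to the chosen models. This is precisely why the uniqueness assertions of Theorem~\ref{deligne_manin_extension}(2) and Proposition~\ref{20.11.2019--1}, together with the vanishing of the dissonance, are indispensable: without the normalisation ``exponents in $\tau$'' one could neither kill the twist nor descend through GAGA and GFGA, and indeed the bare functor $\g C(X^*/S)\to\g C(X^*/S)^\wedge$ is \emph{not} full.
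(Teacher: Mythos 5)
Your proposal is correct and follows essentially the same route as the paper: preferred logarithmic models with exponents in $\tau$ (Theorem \ref{01.10.2019--1} / Theorem \ref{deligne_manin_extension}), extension of arrows with vanishing dissonance via Proposition \ref{20.11.2019--1}, descent through GAGA and GFGA, and Lemma \ref{24.10.2019--1}(2) plus Krull intersection (the paper cites EGA I 10.8.13 for this) for faithfulness. Your explicit remark that exponents in $\tau$ force the dissonance to vanish is left implicit in the paper but is exactly the point being used.
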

\begin{proof}
Consider the natural composition of functor  
\[
 {\g C}^{\log}_Y(X/S) \aro \g C_Y^{\rm rs} (X^*/S)\arou\nu  \g C (\g X^* /\g S )^\wedge ,
\]
call it $\mu$. The result shall be proved once we establish that $\mu$ is essentially surjective and that $\nu$ is fully faithful. 

{\it Essential surjectivity of $\mu$.} This is very similar to the beginning of the proof of Theorem \ref{01.10.2019--1} and we leave to the reader to fill in the details. 


{\it Fullness of $\nu$.} Let  $E$ and $F$ be in $\g C^{\rm rs}(X/S)$ and consider 
\[
\left\{\g g_k:  (E |_{X_k^*} )^\an
\aro
 (F|_{ X^*_k} )^\an\right\}_{k\in\NN}
\]
a compatible system of arrows defining a morphism of $\g C(\g X /\g S)^\wedge$.   Let $\ce$ and $\cf$ be logarithmic models of $E$ and $F$ satisfying conditions (1) and (2) of Theorem \ref{01.10.2019--1}. Once  $E|_{X_k^*}$, respectively $F|_{X_k^*}$, is identified with $(\ce|_{X_k})|_{X_k^*}$, respectively $(\cf|_{X_k})|_{X_k^*}$, 
Proposition   \ref{20.11.2019--1} shows that each $\g g_k$ can be extended to an arrow 
\[
\wt{\g g}_k: (\ce|_{ X_k})^\an \aro (\cf|_{ X_k})^\an
\]
in $\g C^{\log}_{\g Y_k}(\g X_k/\g S_k)$. Now  GAGA assures that $\wt{\g g}_k$ is the analytification of a horizontal arrow 
\[
\wt g_k  : \ce|_{X_k}\aro\cf|_{X_k}.
\]
Unravelling all identifications and using the uniqueness statement in Proposition  \ref{20.11.2019--1}, we can assure that 
\[
\wt g_{k+1}|_{X_k} =\wt g_k.
\]
By GFGA (Proposition \ref{GFGA}), there exists a  horizontal arrow $\wt g:\ce\to\cf$  such that $\wt g|_{X_k}=\wt g_k$. In particular, if $g=\wt g|_{X^*}$, then $(g|_{X_k^*})^\an=\g g_k$.

{\it Faithfulness of $\nu$.} This follows from faithfulness of each $(-)^\an:\coh{\co_{X_k^*}}\to\coh{\co_{\g X_k^*}}$ \cite[XII, Proposition 1.3.1, p.241]{SGA1},  from \ega{I}{}{10.8.13, p.198} and from part (2) of Lemma \ref{24.10.2019--1}. 
\end{proof}
  
Let now $\xi:S\to X^*$ be a section of $X\to S$ and denote by  $\Ga$ the fundamental group of the topological space  $( X^*_0)^\an$ based at the point $\xi(S_0)$. 

Proceeding as in Section 5.5 and 5.6 of \cite{hai-dos_santos19}, it is not difficult to see that the $R$-linear $\ot$-category  $\g C(\g X^*/\g S)^\wedge$ is equivalent to $\rep R\Ga$. 
  
\begin{cor}
 [Riemann-Hilbert correspondence] 
\label{25.11.2019--1}Let $\xi:S\aro X^*$ and $\Ga$ be as before.  Then the fiber functor
$\xi^*:\g C^{\rm rs}_Y(X^*/S)\aro \modules R$ factors through an     equivalence of $R$-linear $\ot$-categories $\Ph:\g C^{\rm rs}_Y(X^*/S)\stackrel\sim\aro \rep R{\Ga}$. 
\end{cor}
\begin{proof}
Theorem \ref{21.11.2019--1} yields a functor
\[\nu:\g C_Y^{\rm rs}(X^*/S)\aro \g C(\g X^*/\g S)^\wedge\]
which is compatible with the fiber functors induced by $\xi$. Recall that the target of $\nu$ 
is the pro-system of categories 
$\g C(\g X^*_k/\g S_k)$ (Definition \ref{dfn_28.07.2021--1}). According to Theorem \ref{10.06.2016--2} we have an equivalence
\[ \g C(\g X^*_k/\g S_k)\stackrel\sim\aro
\bb{LS}(\g X^*_k/\g S_k)\]
which is also compatible with the fiber functor given by $\xi_k=\xi\otimes_RR_k$. Further,     $\xi_k^*$ yields an equivalence
\[ \bb{LS}(\g X^*_k/\g S_k)
\stackrel\sim\aro \rep{R_k}\Gamma.\]
Therefore we obtain an equivalence
\[ \g C(\g X^*/\g S)^\wedge\stackrel\sim\aro
\rep{R_k}\Gamma^\wedge
\stackrel\sim\aro \rep R\Gamma.\]
Composition with $\nu$ now gives us $\Ph$ mentioned in the statement.
%
\end{proof}
\section{Applications to the calculation of differential Galois groups}
\label{31.01.2020--5}

The problem of computing the differential Galois group of a connection (in particular of a system of linear differential equations) on a complex variety is not an easy problem, even for a simple case such as the affine line, see, e.g., \cite{katz87} and references therein.  On the other hand, in case of regular-singular connections, it is possible to employ the complex analytic picture to compute by means of Schlesinger's Theorem: the differential Galois group is the closure of the monodromy group. 

In our relative setting, where the base is the spectrum of a complete discrete valuation ring, thanks to Corollary \ref{25.11.2019--1} and the results obtained in   \cite{hai-dos_santos19, duong-hai-dos_santos18}, we are able to explicitly compute the differential Galois group of connections.  

Let $R$ now be $\pos\CC t$ and  $S$ be its  spectrum. 
We shall work in the setting of Section \ref{31.01.2020--1}, which is repeated here for   convenience. 
As in the previous sections, we abbreviate $R/(t^{k+1})$ to $R_k$ and $\spc R_k$ to $S_k$. 
Let   
 $f:X\to S$ be a proper and smooth morphism with  connected fibres and $Y\subset X$ a relative effective (positive) divisor having normal crossings in the sense of \cite[4.0, p.187]{katz70}. In addition, we assume that   $Y$ is a finite sum $\sum_cY_c$ where, for each $c$,     $Y_c$ is $S$-smooth and $Y_c\ot_RR_0$ is connected. 
As usual, $X^*=X\smallsetminus Y$,  a smooth $R$-scheme having connected fibres. 
Finally,  we write 
\[
X_k=X\ot_RR_k,\quad\text{and} \quad X^*_k=X^*\ot_RR_k.
\]

In this setting we can apply Tannakian duality to define   differential Galois groups at a point (there are two such groups) of a vector bundle on $X^*$ endowed with a relative connection. This shall be reviewed briefly in Section \ref{26.02.2020--1} below. 



\subsection{The differential Galois groups of a vector bundle with a relative   connection}\label{26.02.2020--1}
 Let us fix an $R$-point $\xi$ of $X^*$ and write $\Ga$ for the topological fundamental group of $(X^*_0)^\an$ based at $\xi(S_0)$.   To avoid repetitions, all group schemes in sight are {\it affine and flat over $R$}.  
 
 Recall that given  $M$ and $M'$ in $\g C(X^*/S)$, we can form the objects $M\op M'$, $M\ot M'$ and $H\!om( M,M')$ as Katz explains in \cite[1.1]{katz70}. If $M$ is a vector bundle, we denote by $\check M$ the object $H\!om( M,\co')$.
  
Let $(M,\na)\in\g C(X^*/S)$ be such that $M$ is a vector bundle and introduce the full subcategory $\langle M\rangle_\ot$  of $\g C(X^*/S)$ consisting of ``{\it subquotients  of objects tensor generated by $(M,\na)$}'', i.e.  the objects of $(M,\na)$ are 
\[
\left\{ N\in\g C(X^*/S)\,:\,\begin{array}{c} 
\text{There exist non-negative integers $\{a_j,b_j\}_{j=1}^r$, a   } \\ \text{subobject $N'$ of $(M^{\ot a_1}\ot \check M^{\ot b_1})\op\cdots\op (M^{\ot a_r}\ot \check M^{\ot b_r})$ }
\\ \text{and a horizontal epimorphism $N'\to N$}\end{array}\right\}\eqno(\dagger).
\] 
Then,  \cite[Thm. II.4.1.1]{saavedra72} (see  pages 1109 and 1140 in   \cite{duong-hai18} as well) says that the functor which associates to $N\in\langle M\rangle_\ot$ its pull-back via $\xi$, 
\[
\xi^*: \langle M\rangle_\ot\aro\modules R, 
\]
induces a $\ot$-equivalence between $\langle M\rangle_\ot$ and the category $\rep R{\mm{Gal}'(M)}$, where  $\mm{Gal}'(M)$ is a group scheme  called the {\it full differential Galois} group of $M$  
  \cite[Section 7]{duong-hai-dos_santos18}.  Contrary to the case of a base field,  $\mm{Gal}'(M)$ might easily fail to be of finite type \cite[3.2.1.5]{andre01}, \cite[Example 7.11]{duong-hai-dos_santos18}. In particular, the natural morphism of group schemes 
\[
\xi^*: \mm{Gal}'(M)\aro {\bf GL}(\xi^*M)
\]
is usually {\it not} a closed immersion. But, by taking the closure of the image of the generic fibre of $\xi^*$, we   obtain a closed subgroup scheme $\mm{Gal}(M)$ of $\bb{GL}(\xi^*M)$. (On the level of Hopf algebras, $\mm{Gal}(M)$ corresponds to the image of the algebra of $\mm{Gal}'(M)$ in that of $\bb{GL}(\xi^*M)$ \cite[4.1]{duong-hai-dos_santos18}.)
This group scheme might be called the {\it restricted differential Galois group of $(M,\na)$}. See \cite{duong-hai-dos_santos18}, specially p.1017. Note that the construction of $\mm{Gal}(M)$ is not Tannakian, but its category of representations on {\it finite and free $R$-modules} might be identified inside $\langle M\rangle_\ot$: In the notation of ($\dagger$), instead of considering all $N$ and all subobjects, we restrict attention to those which are {\it vector bundles} and to certain {\it special} subobjects. 
In summary, and  speaking  loosely, $\mm{Gal}(M)$ depends more on the generic nature of $(M,\na)$ while $\mm{Gal}'(M)$ is more complicated and carries information concerning the reduction of $M$. It is therefore more delicate to pin down ${\rm Gal}'(M)$.  If however $M$ is regular-singular, Corollary \ref{25.11.2019--1} may help us to compute ${\rm Gal}'(M)$.

Let us now assume that $(M,\na)$ is regular-singular.
Proposition \ref{16.01.2019--1} assures that $\langle M\rangle_\ot$  is a {\it full subcategory} of $ \g C^{\rm rs}_Y(X^*/S)$, so that the equivalence in Corollary  \ref{25.11.2019--1} allows us to compute $\mm{Gal}'(M)$ with the help of $\Ga$ in the following way. 

The category $\rep R\Ga$ is Tannakian (in the sense of \cite[Definition 1.2.5]{duong-hai18} and due to \cite[Corollary 4.5]{hai-dos_santos19}) so that \[\rep R\Ga\simeq \rep R\Pi\] for a certain group scheme $\Pi$, called the   {\it Tannakian envelope of $\Ga$} \cite[Definition 4.1]{hai-dos_santos19}. In fact, the abstract group $\Pi(R)$ is the target of an ``universal'' arrow \[u:\Ga\aro\Pi(R)\] having the property that the natural functor \[u^\#:\rep R\Pi\aro\rep R\Ga\] deduced from $u$ is an {\it equivalence} of $R$-linear $\ot$-categories. Moreover, if $G$ is a group scheme and  $\ph:\Ga\to G(R)$ is a morphism of abstract groups, then there exists a unique arrow of group schemes 
\begin{equation}\label{27.01.2022--2}
u_\ph:\Pi\aro G
\end{equation}
such that 
\begin{equation}\label{27.01.2022--1}
\xymatrix{\Ga\ar[dr]_\ph\ar[r]^-{u}&\Pi(R)\ar[d]^{u_\ph (R)}
\\&G(R)}
\end{equation}
commutes.  Furthermore, if  the set  $\{\varphi(\gamma)_{0}\}_{\gamma\in \Gamma}$, respectively $\{\varphi(\gamma)_{\CC(\!(t)\!)}\}_{\gamma\in \Gamma}$, 
 is schematically dense in $G_0$, respectively    in $G_{\CC(\!(t)\!)}$, then $u_\varphi$ is {\it faithfully flat}. 
On the other hand, if   $\{\varphi(\gamma)_0\}_{\gamma\in \Gamma}$  lies in the $\CC$-points of a 
a closed $\CC$-subgroup scheme $H$ of $G_{0}$, then $u$ shall factor through the points of 
the Neron blowup of $G$ with center in $H$. See  \cite[Section 4.2]{hai-dos_santos19}, especially Corollary 4.9 and Proposition 4.13 there.

For any finite and free $R$-module $E$ affording a representation $\ph:\Ga\to\GL(E)$,   let $\langle E\rangle_\ot$ be constructed along the same lines as  ($\dagger$), i.e. $\langle E\rangle_\ot$ is the full subcategory of $\rep R\Ga$ of  subquotients of objects tensor generated by $E$. By Tannakian duality,  the category $\langle E\rangle_\ot$ is equivalent to   $\rep R{\mm{Gal}'(E)}$, where $\mm{Gal}'(E)$ is a group scheme which we call, for the sake of discussion, the {\it full Galois group} of $E$. In addition, the inclusion functor $\langle E\rangle_\ot\to\rep R\Ga$ produces a {\it faithfully flat} arrow \[\Pi\aro\mm{Gal}'(E)\] as assured by \cite[Theorem 4.1.2(i), p.1125]{duong-hai18}. (We note in passing that the same construction applies to any finite and free $R$-module affording a representation of a group scheme.)
Furthermore, if $u_\ph$ is as in \eqref{27.01.2022--2}, then we have a factorization 
\begin{equation}\label{27.01.2022--3}
\xymatrix{\Pi\ar[rrrr]^{u_\ph}\ar[d]_{\text{faithfully flat}} &&&& \bb{GL}(E)
\\
{\rm Gal}'(E)\ar[rrrr]^{\text{isomorphism on}}_{\text{ generic fiber}} &&&& {\rm Gal}(E)\ar[u]_{\text{closed}},
}
\end{equation}
which was called the {\it diptych} in \cite[Definition 4.1]{duong-hai-dos_santos18}. This factorization is, in the correct setting, {\it unique} \cite[Lemma 4.4]{duong-hai-dos_santos18} and serves to understand ${\rm Gal}'(E)$ as ``the image'' of $u_\ph$. It is by means of the diptych that we shall carry out our computations. 

In view of Corollary \ref{25.11.2019--1}, if $M\in\g C_Y^{\rm rs}(X^*/S)$ is as above, then $E=\xi^*M$ is a free $R$-module affording a representation of $\Ga$ and
\begin{equation}\label{31.01.2022--2}
\mm{Gal}'(M)\simeq\mm{Gal}'(E).
\end{equation} 
Using this and the material developed in \cite[\S4.2-3]{hai-dos_santos19}, we shall   exhibit below a simple case where $\mm{Gal}'(M)$ is easily computed and  fails to be of finite type, see Example \ref{31.01.2022--1}.

\subsection{The case of $X=\PP^1_R$ minus two points}\label{26.02.2020--2}
Let $X=\mm {Proj}\,R[x_0,x_1]$ and  write $x=x_1/x_0$, which is  considered as an inhomogeneous coordinate.  
We then fix $Y=0+\infty$ and $\xi=1$. This being so, the $\co_X$-module $\Om^1_{X/R}(\log Y)$ is free on $ x^{-1}dx$ and   $\Ga$ is generated by the loop 
$\ga:s\mapsto e^{\mm is}$. We start with an example given in  \cite[Example~7.9]{duong-hai-dos_santos18}.

\begin{ex}   Let $\cm=\co_X\bb m$ and define on it a logarithmic connection by     
\[
\begin{split}
\na  \bb m & = -\fr{ t}{2\pi\mm i}\bb m\ot\fr{dx}x.
\end{split}
\]
Let $M\in\g C_Y^{\rm rs}(X^*/S)$ be      $\cm|_{X^*}$. We set ourselves to compute $\mm{Gal}'(M)$. From Corollary  \ref{25.11.2019--1}, see eq. \eqref{31.01.2022--2}, our computations can be carried out in  $\rep R\Ga$ and we need first to describe the representation of $\Ga$ associated to $M$. This means that we should work on the complex space $(X_k^*)^\an$ for some unspecified $k$.

The corresponding differential equation is 
\[  y' =\frac t{2\pi\mm ix} y \]
and a  solution is $x^{t/2\pi\mm i}$ (this is a regular function on $(X_k^*)^\an$ near 1). The monodromy, i.e. the representation of $\Ga$ on $R_k=\xi_k^*M_k$  obtained by analytic continuation,   is:
\[ \gamma\longmapsto e^t\in R_k^\ti.\]
Letting $k$ vary, we see that the representation  $\ph:\Ga\to\GL_1(R)$ associated to $M$ is given by $\ga\mapsto e^t$. Let us now adopt the notations of Section \ref{26.02.2020--1}; in particular $\Pi$ is a group scheme   and $u_\ph:\Pi\to \bb{GL}_1$ is a morphism of group schemes. We are required to find the diptych of $u_\ph$, see diagram \eqref{27.01.2022--3}.

We notice that the image of $\Gamma$ is dense    in $\GG_{m}(\CC(\!(t)\!))$ while its image  in $\GG_m(\CC)$ is the unit element.
Thus, $\ph$ factors through the Neron blowup  at the trivial subgroup of the closed fibre, call it $G\to \GG_m$. (The reader will profit to know that $G(R)$ is the ``congruence'' group $\{a\in R^\ti\,:\,a\equiv1\mod t\}$.)  
In a diagram: 
\[
\xymatrix{\Ga\ar[dr]_\ph\ar[r]^-{u}\ar[d]_{\ph'}&\Pi(R)\ar[d]^{u_\ph (R)}\\
 G(R)\ar[r]&\GG_{m }(R).}
\]
Now  $\displaystyle \ph'(\gamma)_0\in G_0(\CC)$ is not trivial and since $G_0=\GG_{a,\CC}$, we conclude that $\ph'(\ga)_0$ generates a dense subgroup.  Hence $u_{\ph'}$ is faithfully flat (as explained in Section \ref{26.02.2020--1}) and $\mm{Gal'}(M)=G$ since 
\[
\xymatrix{
\Pi\ar[r]^-{u_\ph}\ar[d]_{u_{\ph'}}& \GG_{m,R}
\\
G\ar[r]&\GG_{m,R}\ar@{=}[u]
}
\] is the diptych of $u_\ph$. 
\end{ex}

\begin{ex}\label{31.01.2022--1}
 Let $\cm=\co_X\bb m_1\op\co_X\bb m_2$ and define on it a   logarithmic connection by the equations 
\[
\begin{split}
-2\pi\mm i\na  \bb m_1 & =  t\bb m_1\ot\fr{dx}x\\
-2\pi\mm i\na  \bb m_2&= \left( \bb m_1+   t \bb m_2 \right)\ot\fr{dx}x.
\end{split}
\]
Let $M$ denote the object of $\g C_Y^{\rm rs}(X^*/S)$ obtained by restricting $\cm$. We wish to compute $\mm{Gal}'(M)$.

The system of linear differential equations associated to the above connection is
\[ \left\{
\begin{split}
2\pi\mm i y' & = \fr txu+\fr1xv \\
2\pi\mm i z'&= \fr txv.
\end{split}\right.\]
Fixing $k\in\NN$, the columns of  
\[
x^{\fr1{2\pi\mm i}  \begin{pmatrix}t&1\\0&t\end{pmatrix}}:=\exp\left[
\fr{\log x}{2\pi\mm i}\begin{pmatrix}t&1\\0&t\end{pmatrix} \right]
\]
are analytic functions on $(X_k^*)^\an$ near 1 which are solutions of the differential system. 
Therefore, the monodromy matrix, i.e the representation of $\Ga$ on $E_k:=\xi_k^*(M_k)$, corresponds to $\ga\mapsto\begin{pmatrix}e^t&e^t\\0&e^t\end{pmatrix}$. Making $k$ vary, we conclude that the representation $\ph:\Ga\to\GL_2(R)$ associated to $M$ is $\ga\mapsto\begin{pmatrix}e^t&e^t\\0&e^t\end{pmatrix}$ and the computation of $\mm{Gal}'(M)$ can be done in $\rep R\Ga$.  We now bring in the notations of Section \ref{26.02.2020--1}, in particular $\Pi$ a group scheme   and $u_\ph:\Pi\to \bb{GL}_2$ is a morphism of group schemes. 

Let  $\rho:\GG_a\ti\GG_m\to\bb{GL}_2$ be the closed immersion defined by   
\[
(a,\la)\longmapsto\begin{pmatrix}1& a\\0&1\end{pmatrix}\begin{pmatrix}\la&  \\ &\la\end{pmatrix}=\begin{pmatrix}\la&\la a\\0&\la\end{pmatrix}.
\] 
Then, $\ph$ factors through $\ph':\Ga\to\GG_{a}(R)\ti\GG_{m}(R)$, where $\ph'(\ga)=(1,e^t)$. At this point, the reader should observe that the image of $\Ga$ is dense on the generic fiber of $\GG_{a,R}\ti\GG_{m,R}$, while on the special fibre it lies on $\GG_a(\CC)\ti\{1\}$.

In \cite{hai-dos_santos19}, Section 4.3, we constructed a group scheme $\cn$ having the ensuing properties. 

\begin{enumerate}[(i)]\item There exists an arrow $j:\cn\to\GG_a\ti\GG_m$ such that $j\ot_R\CC(\!(t)\!)$ is an isomorphism. 
\item On the level of $R$-points,     $\cn(R)$ is $\{(a,\la)\in R\ti R^\ti\,:\,\la=e^{ta}\}$. In particular, $\ph'$ factors through $j(R)$. 
\item If   $\ph'':\Ga\to\cn(R)$ is defined by $\ga\mapsto(1,e^t)$, so that  $\ph'=j\ph''$,
then the morphism $u_{\ph''}:\Pi\to\cn$ mentioned in \eqref{27.01.2022--1}  is {\it faithfully flat} \cite[Proposition 4.15]{hai-dos_santos19}. 
\end{enumerate}
From (i) and (iii) we conclude that 
\[
\xymatrix{\Pi\ar[r]^{u_\ph}\ar[d]_{u_{\ph''}} & \bb{GL}_2
\\
\cn\ar[r]_-{j} & \GG_a\ti\GG_m\ar[u] 
}
\]
is the diptych of $u_\ph$ and  $\mm{Gal}'(M)\simeq\cn.$ We notice that $\cn$ is {\em not} of finite type (see the discussion in the proof of Corollary 4.16,  top of p. 9399, in \cite{hai-dos_santos19}).

\end{ex}

%
%
%

\end{document}